\numberwithin{equation}{section}
\numberwithin{equation}{section}
\newtheorem{proposition}{Proposition}[section]
\newtheorem{lemma}[proposition]{Lemma}
\newtheorem{theorem}[proposition]{Theorem}
\newtheorem{thmintro}{Theorem}
\theoremstyle{definition}
\newtheorem{example}[proposition]{Example}
\theoremstyle{remark}
\newtheorem{remark}[proposition]{Remark}
\newcommand{\Rmnum}[1]{\expandafter\@slowromancap\romannumeral #1@}
\def \g{\mathfrak{g}}
\def \N{\mathbb{N}}
\def \Q{\mathbb{C}}
\def \C{\mathbb{C}}
\def \Z{\mathbb{Z}}
\def \I{\mathbb{I}}
\def \bF{\mathbb{F}}
\def \bc {\mathbf{c}}
\def \bd {\mathbf{d}}
\def \fg{\mathfrak{g}}
\def \A{\mathcal{A}}
\def \wI{\I_{\circ}}
\def \wItau{\I_{\circ,\tau}}
\def \bI{\I_{\bullet}}
\def \gr{\mathrm{gr}}
\def \cR{\mathcal{R}}
\def \cP{\mathcal{P}}
\def \bs{\mathbf{r}} 
\def \bF{\mathbb{F}}
\def \bw{w_\bullet}
\def \bwi{w_{\bullet,i}}
\def \bbw{{\boldsymbol{w}}}
\def \ba{\mathbf{a}}
\def \P{\mathcal{P}}
\def \k{\kappa}
\def \bT{\mathbf{T}}
\def \T{\mathrm{T}}
\def \Id{\mathrm{Id}}
\newcommand{\U}{\mathbf{U}}
\newcommand{\tK}{K}
\newcommand{\qbinom}[2]{\begin{bmatrix} #1\\#2 \end{bmatrix} }
\def \ov{\overline}
\def \un{\underline}
\def \X{\mathcal{X}}
\newcommand{\nc}{\newcommand}
\nc{\greentext}[1]{\textcolor{green}{#1}}
\nc{\redtext}[1]{\textcolor{red}{#1}}
\nc{\bluetext}[1]{\textcolor{blue}{#1}}
\nc{\brown}[1]{\browntext{ #1}}
\nc{\green}[1]{\greentext{ #1}}
\nc{\red}[1]{\redtext{ #1}}
\nc{\blue}[1]{\bluetext{ #1}}
\def \TT{\mathbf T}
\def \bF{\mathbf{F}}
\def \bvs{{\boldsymbol{\varsigma}}}
\def \vs{\varsigma}
\def \U{\mathrm U}
\def \Ui{\mathrm{U}^\imath}
\def \bB{\mathbf{B}}
\def \bK{\mathbf{K}}
\def \bE{\mathbf{E}}
\def \bF{\mathbf{F}}
\def \bU{\mathbf U}
\def \bUi{\mathbf{U}^\imath}
\newcommand{\arxiv}[1]{\href{http://arxiv.org/abs/#1}{\tt arXiv:\nolinkurl{#1}}}
\subjclass[2020]{Primary 17B37, 17B63}
\keywords{Quantum symmetric pair, Poisson algebra, Dual Poisson-Lie group, Braid group action, PBW basis}
\begin{document}

\title[Braid group symmetries on Poisson algebras]{Braid group symmetries on Poisson algebras arising from quantum symmetric pairs}

\author[Jinfeng Song]{Jinfeng Song}
\address{Department of Mathematics, National University of Singapore, Singapore}
\email{j$\_$song@u.nus.edu}

\author[Weinan Zhang]{Weinan Zhang}
\address{Department of Mathematics and New Cornerstone Science Laboratory, The University of Hong Kong, Pokfulam, Hong Kong SAR, P.R.China}
\email{mathzwn@hku.hk}
\maketitle

\begin{abstract}
Let $(\U,\Ui)$ be the quantum symmetric pair of arbitrary finite type and $G^*$ be the associated dual Poisson-Lie group. Generalizing the work of De Concini and Procesi \cite{DCP93}, the first author introduced an integral form for the $\imath$quantum group $\Ui$ and its semi-classical limit was shown to be the coordinate algebra for a Poisson homogeneous space of $G^*$. In this paper, we establish (relative) braid group symmetries and PBW bases on this integral form of $\Ui$. By taking the semi-classical limit, we obtain braid group symmetries and  polynomial generators on the associated Poisson algebra. These symmetries further allow us to describe the Poisson brackets explicitly. Examples of such Poisson structures include Dubrovin-Ugaglia Poisson brackets.
\end{abstract}

 \setcounter{tocdepth}{1}
\tableofcontents

\section{Introduction}

\subsection{Background}
For a complex semisimple Lie algebra $\g$, the associated Drinfeld-Jimbo quantum group $\U=U_q(\g)$ is a central object in modern mathematics. Let $\A=\mathbb{Z}[q^{\pm1/2}]$. The quantum group $\U$ admits two well-known integral $\A$-forms: one, introduced by Lusztig \cite{Lus90a}, specializes to the universal enveloping algebra $U(\g)$ at $q=1$; the other arises from the work of De Concini--Kac \cite{DK90} in the study of the quantum groups at roots of unity. It was proved by De Concini--Procesi \cite[Theorem~12.1]{DCP93} that after rescaling, the De Concini--Kac form specializes at $q=1$ to the coordinate algebra $\C[G^*]$ for the dual Poisson-Lie group $G^*$. This builds a direct connection between the rescaled De Concini--Kac form and Poisson geometry; see also \cite{Qin20,Sh22a,Sh22b} for its applications in the cluster algebras. We refer to the rescaled De Concini--Kac form as the DCKP-integral form, and denote it by $\U_\A$. 


Let $\theta$ be an algebra involution on $\g$ and $\g^\theta\subset\g$ be the fixed-point subalgebra. Associated with a symmetric pair $(\mathfrak{g},\mathfrak{g}^\theta)$, the \emph{quantum symmetric pair} $(\U,\U^\imath)$, introduced by Letzter \cite{Let02}, consists of the quantum group $\U$ and a coideal subalgebra $\Ui\subset \U$, called an \emph{$\imath$quantum group}. Proposed by Bao--Wang \cite{BW18a}, $\imath$quantum groups can be viewed as vast generalizations of quantum groups, and the $\imath$program aims at generalizing fundamental constructions on quantum groups to $\imath$quantum groups; see \cite{Wa23} for some progress in the $\imath$program.

The Lusztig-type integral form on modified $\imath$quantum groups was introduced by Bao and Wang \cite{BW18b} in the study of the $\imath$canonical basis, and it has been extensively studied over the last decade in representation theory, categorification, and geometry; see, for example, \cite{BW18a, BS22, Wa23}.

On the other hand, to relate $\imath$quantum groups with Poisson geometry, one is forced to consider the DCKP-type integral form $\Ui_\A$, which was introduced recently by the first author \cite{So24}, defined by $\Ui_\A=\U_\A \cap \Ui$. This integral form $\Ui_\A$ specializes to the coordinate algebra of the Poisson homogeneous space $K^\perp \backslash G^*$ \cite[Theorem 1]{So24}, and it has revealed exciting connections with cluster algebras \cite{So23}. Unlike the Lusztig-type integral form, the DCKP-type integral form $\Ui_\A$ has not been deeply studied, and its basic algebraic properties remain unclear. 

\subsection{Goal} In the current paper, we study the DCKP-type integral form for $\Ui$, as well as its semi-classical limit. We establish relative braid group symmetries and PBW bases on this integral form. 
By taking the semi-classical limit, we obtain relative braid group symmetries and a set of polynomial generators on the coordinate Poisson algebra $\C[K^\perp\backslash G^*]$. This allows us to describe the Poisson bracket on $K^\perp\backslash G^*$ explicitly.

\subsection{Braid group actions on integral forms of $\imath$quantum groups}

It was shown in \cite{DCP93} that $\U_\A$ is invariant under Lusztig's braid group symmetries and that there exists a rescaled PBW basis of $\U_\A$ established using braid group symmetries. As a generalization of Lusztig braid group symmetries on quantum groups, relative braid group symmetries $\TT_i$ on $\Ui$ were systematically constructed for arbitrary finite type by Wang and the second author \cite{WZ23}; see also \cite{KP11,Do20} for some previous case-by-case constructions via brute-force computation for most quasi-split type. Using the relative braid group symmetries, a PBW basis was constructed on $\Ui$ in \cite{LYZ24}.
 
Therefore, it is natural to expect that the DCKP-type integral form $\Ui_\A$ is invariant under the relative braid group symmetries and admits a (rescaled) PBW basis in general. The first main result of this paper settles this problem for $\Ui_{\A'}$ where $\A'$ is a certain localization of $\A$  (see Section \ref{sec:proj}) and $\Ui_{\A'}=\Ui_\A \otimes_\A \A'$.

\begin{thmintro} \label{thm:1}(Theorem~\ref{thm:intt} \& Theorem~\ref{thm:PBW})
    Let $(\U,\Ui)$ be a quantum symmetric pair of arbitrary finite type. 
    \begin{itemize}
    \item[(1)] The relative braid group symmetries $\TT_i$ on $\Ui$ preserve the integral form $\Ui_{\A'}$. 
    \item[(2)] There exists a rescaled PBW basis for $\Ui_{\A'}$. In particular, the root vectors arising from the rescaled PBW basis form a finite generating set of $\Ui_{\A'}$ as an $\A'$-algebra.
    \end{itemize}
\end{thmintro}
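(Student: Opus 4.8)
The plan is to reduce everything to the known integrality and PBW statements for the DCKP form $\U_\A$ of the ambient quantum group, together with the compatibility of the relative braid symmetries $\TT_i$ on $\Ui$ with Lusztig's braid symmetries $T_i$ on $\U$. The starting point is that $\Ui_{\A'} = \U_{\A'} \cap \Ui$ by definition (after the localization), so to prove part (1) it suffices to show two things: that $\TT_i(\Ui) \subseteq \Ui$ (this is already known from \cite{WZ23}), and that $\TT_i$ maps $\U_{\A'}$ into itself on the subspace $\Ui$. For the latter I would use the explicit intertwining relation between $\TT_i$ and the $T_i$'s (the ``factorization'' of relative braid operators through Lusztig operators, possibly up to the quasi-$K$-matrix or a rescaling automorphism), and invoke the De Concini--Procesi result that each $T_i$ preserves $\U_\A$. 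The localization to $\A'$ is presumably exactly what is needed to absorb the denominators appearing in the coefficients relating $\TT_i$ to $T_i$ and in the rescaling.

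For part (2), I would construct the rescaled PBW basis of $\Ui_{\A'}$ by mimicking the De Concini--Procesi argument. Fix a reduced expression for the longest element $w_0$ (compatible with the involution $\theta$, in the sense used in \cite{LYZ24}), and use the relative braid operators $\TT_i$ to define real root vectors, together with the imaginary/fixed-type generators for the $\bullet$-part. By \cite{LYZ24} ordered monomials in these root vectors form a PBW basis of $\Ui$ over the field of fractions; the task is to pin down the correct rescaling factors (powers of $q$ and of the $q$-integers $[n]$, times the normalization constants already used for $\U_\A$) so that the rescaled root vectors and their ordered monomials lie in $\Ui_{\A'}$ and span it over $\A'$. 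Integrality of the individual rescaled root vectors follows from part (1) applied to the rescaled Chevalley-type generators of $\Ui_{\A'}$; that the ordered monomials span is where one needs a straightening/commutation argument showing the structure constants lie in $\A'$.

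The main obstacle I expect is the straightening argument: proving that when one multiplies two PBW monomials and re-expresses the product in the PBW order, the resulting coefficients lie in $\A'$ rather than merely in the fraction field. In the De Concini--Procesi setting this is handled by a Levi-subalgebra induction together with the known commutation formulas between quantum root vectors (the Levendorskii--Soibelman type relations), where all coefficients are manifestly in $\A$ after rescaling. In the $\imath$ setting the analogous commutator formulas between relative root vectors are more intricate (they involve lower terms of mixed real/imaginary type, and the Serre-type relations for $\imath$quantum groups have more complicated coefficients), so the bookkeeping of which primes must be inverted is delicate. I would organize this by rank-two reductions: check that for each rank-two sub-$\imath$datum the relevant $\TT_i$-commutation relations have coefficients in $\A'$ after the chosen rescaling, then propagate to higher rank by the braid group compatibility and an induction on the length of $w_0$. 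A secondary technical point is ensuring the rescaling is consistent with the one already fixed on $\U_\A$ so that $\Ui_{\A'} = \U_{\A'}\cap\Ui$ is genuinely respected; this should follow from the explicit form of the embedding $\Ui \hookrightarrow \U$ on Chevalley generators.
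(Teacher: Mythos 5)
Your strategy for part (1) has a genuine gap. You propose to deduce integrality of $\TT_i$ from De Concini--Procesi integrality of Lusztig's $T_i$ via a ``factorization of relative braid operators through Lusztig operators, possibly up to the quasi-$K$-matrix or a rescaling.'' But the only such factorization available (from \cite{WZ23}) relates $\TT_i$ to $T_{\bwi}$ through conjugation by a rank-one quasi-$K$-matrix, an infinite sum living in a completion of $\U^+$ with no integrality control; inverting $1+q$ and the quantum Cartan integers does not tame it, and the paper never takes this route. What your proposal is missing is the paper's central new idea: an intrinsic finite-generation statement saying that $\Ui_{\A'}$ is the smallest $\A'$-subalgebra containing the Chevalley-type generators $B_i$, $k_i^{\pm1}$ ($i\in\wI$), $E_j,F_j,K_j^{\pm1}$ ($j\in\bI$) and closed under the rescaled $q$-commutators $\{A,B\}_{q^a}=\frac{AB-q^aBA}{q-1}$ (Propositions~\ref{prop:gc} and~\ref{prop:gci}, proved using Sugawara's $q$-commutator presentation of quantum root vectors and the associated graded of the Letzter map $\pi^\imath$). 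Since $\TT_i$ is an algebra automorphism it commutes with these operations, so integrality reduces to checking $\TT_i$ on finitely many generators, which is done via the explicit rank-two formulas of Lemma~\ref{lem:braid-rk2}; the inverse is handled by $\TT_i^{-1}=\sigma_\tau\circ\TT_i\circ\sigma_\tau$. Without some substitute for this characterization, your reduction to the integrality of the $T_i$ does not close.

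For part (2), the straightening/Levendorskii--Soibelman argument that you correctly flag as the main obstacle is sidestepped entirely in the paper: spanning is proved by showing that the associated graded of $\pi^\imath_{\A'}$ is an $\A'$-algebra isomorphism $\gr\,\Ui_{\A'}\cong\U_{P,\A'}$ sending $B_\beta\mapsto F_\beta$, so the ordered monomials map onto the known PBW basis of $\U_{P,\A'}$ and hence span; linear independence is imported from \cite{LYZ24}. No commutation relations between $\imath$-root vectors are ever needed. Integrality of the individual root vectors $B_\beta$ does reduce to real rank one via part (1), as you suggest, but the rank-one cases still require the explicit case-by-case verification carried out in Appendix~\ref{app:A}, which your proposal does not account for.
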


Let us briefly explain the ideas in the proof. The difficulty is that $\Ui_{\A'}$ is not automatically equipped with a finite generating set and it is hard to directly establish a generating set without the relative braid group symmetries. However, in the $q=1$ case, if one allows the use of Poisson brackets, then a finite set of Poisson generators for $\C[K^\perp\backslash G^*]$ has been obtained in \cite{So23}. Motivated by this result, we introduce the {\em rescaled $q$-commutator} as a $q$-analog of the Poisson bracket; see Section~\ref{sec:qcom}. Then we show in Proposition~\ref{prop:gci} that there is a finite set $\mathcal{G}$ such that any element of $\Ui_{\A'}$ can be obtained by taking algebraic operations and rescaled $q$-commutators of elements in $\mathcal{G}$. Therefore, to show the integrability of $\TT_i$, it suffices to check formulas of $\TT_i(x)$ for $x\in \mathcal{G}$ and this can be done directly.

In our approach, it is necessary to invert $1+q$ and quantum Cartan integers, and this leads to the localization $\A'$. For further applications on Poisson geometry and quantum symmetric pairs at roots of unity, this localization does not affect much.

\subsection{Braid group actions on the Poisson algebra}
Let $\P$ be the integral Poisson algebra obtained by the specialization $q^{1/2}\mapsto1$ of $\Ui_{\A'}$. The root vectors of $\U_{\A'}$ in Theorem~\ref{thm:1} descend to a set of polynomial generators for $\P$ over a Laurent polynomial ring, and the relative braid group symmetries $\TT_i$ on $\Ui_{\A'}$ descend to Poisson algebra automorphisms on $\P$; see Propositions \ref{prop:poly}-\ref{cor:rbg}.

Let $\P_\C=\C\otimes\P$. By \cite[Theorem~1]{So24}, the Poisson variety $\text{Spec\;}\P_\C$ is canonically isomorphic to the Poisson homogeneous space $K^\perp\backslash G^*$ associated to the underlying symmetric pair for $(\U,\Ui)$; cf. Section \ref{sec:psk}. Denote by $W^\circ\subset W$ the associated relative Weyl group with simple reflections $\bs_i$; see Section~\ref{sec:rbga}. Set $\sigma_w$ for $w\in W$ to be the braid group symmetries on $G^*$ established in \cite{DCKP92}; see Section~\ref{sec:plg}.


\begin{thmintro}\label{thm:2} (Theorem~\ref{thm:rbs})
    Relative braid group symmetries $\TT_i$ on $\Ui_{\A'}$ induce Poisson automorphisms $\sigma_i^{\imath}$ on $K^\perp\backslash G^*$ which satisfy braid relations in the relative Weyl group $W^\circ$.
\end{thmintro}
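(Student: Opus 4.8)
The plan is to transport the braid group symmetries $\TT_i$ on $\Ui_{\A'}$ through the specialization $q^{1/2}\mapsto 1$ and then identify the resulting maps with geometric automorphisms of $K^\perp\backslash G^*$. First, by Theorem~\ref{thm:1}(1) each $\TT_i$ preserves $\Ui_{\A'}$, so it descends to an $\A'/(q^{1/2}-1)$-algebra automorphism of the specialization $\P$. I would check that this descended map is in fact a Poisson automorphism: writing $\TT_i(\bar x \bar y - \bar y \bar x) = \TT_i(x)\TT_i(y) - \TT_i(y)\TT_i(x)$ at the level of $\Ui_{\A'}$, dividing by the appropriate power of $(q-1)$ that governs the semiclassical Poisson bracket (as in the construction of $\P$ from $\Ui_{\A'}$ via the rescaled $q$-commutator), and passing to the limit, one gets $\sigma_i^\imath(\{f,g\}) = \{\sigma_i^\imath(f),\sigma_i^\imath(g)\}$. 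This is essentially Propositions~\ref{prop:poly}--\ref{cor:rbg} already in the excerpt, so the first half reduces to citing those. Tensoring with $\C$ then gives Poisson automorphisms $\sigma_i^\imath$ of $\P_\C = \C[K^\perp\backslash G^*]$, hence of the Poisson variety $K^\perp\backslash G^*$.

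The second half is the braid relations. Here the key point is that they already hold \emph{at the quantum level}: the relative braid group symmetries $\TT_i$ on $\Ui$ satisfy the braid relations of $W^\circ$ (this is the main theorem of \cite{WZ23}), and since $\Ui_{\A'}\subset \Ui$ is $\TT_i$-stable and spans $\Ui$ over the fraction field, the relations $\TT_i\TT_j\TT_i\cdots = \TT_j\TT_i\TT_j\cdots$ hold as identities of automorphisms of $\Ui_{\A'}$. Specializing $q^{1/2}\mapsto 1$ and tensoring with $\C$, these identities descend verbatim to $\sigma_i^\imath$, because the assignment $\TT_i \mapsto \sigma_i^\imath$ is a homomorphism out of the monoid generated by the $\TT_i$'s (composition is preserved by specialization). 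So the braid relations for $\sigma_i^\imath$ are inherited, with no new computation needed.

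The one genuine subtlety — and what I expect to be the main obstacle — is making precise the passage from automorphisms of $\Ui_{\A'}$ to automorphisms of $\P_\C$ in a way that is \emph{compatible with composition} and with the Poisson bracket simultaneously. The issue is that $\P$ is not literally $\Ui_{\A'}/(q^{1/2}-1)$ as a plain quotient if any rescalings by powers of $(q-1)$ are built into the identification of generators (the ``rescaled PBW basis'' and ``rescaled $q$-commutator'' of Section~\ref{sec:qcom} and Theorem~\ref{thm:PBW}); one must verify that $\TT_i$ respects these rescalings, i.e. that $\TT_i$ sends the chosen integral root vectors to $\A'$-combinations of root vectors with the \emph{correct} powers of $q$, so that the limit is well-defined and unambiguous. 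Concretely this comes down to tracking the leading $q$-power in the formulas for $\TT_i$ applied to the generators in $\mathcal{G}$ (the finite set from Proposition~\ref{prop:gci}), which by Theorem~\ref{thm:1} are explicitly computable. Once this bookkeeping is in place, everything else is formal. I would organize the write-up as: (i) recall the construction of $\P$ and the descended maps $\sigma_i^\imath$ from the earlier propositions; (ii) observe $\sigma_i^\imath$ is a Poisson automorphism; (iii) deduce the braid relations from \cite{WZ23} by specialization; (iv) invoke \cite[Theorem~1]{So24} to reinterpret this on $K^\perp\backslash G^*$. Finally, as a compatibility check one can match $\sigma_i^\imath$ against the known $\sigma_w$ on $G^*$ of \cite{DCKP92} under the projection $G^* \to K^\perp\backslash G^*$, which also serves to pin down that the $\sigma_i^\imath$ are non-trivial.
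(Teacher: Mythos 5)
Your proposal is correct and follows essentially the same route as the paper: Theorem~\ref{thm:intt} gives stability of $\Ui_{\A'}$ under $\TT_i$, the braid relations are inherited from the quantum level (\cite[Theorem 9.10]{WZ23}) by specialization, and \cite[Theorem 1]{So24} transports everything to $K^\perp\backslash G^*$. The ``subtlety'' you flag is in fact vacuous, since $\P=\Z'\otimes_{\A'}\Ui_{\A'}$ is literally the base change of $\Ui_{\A'}$ (the rescaled PBW basis and rescaled $q$-commutators are descriptions of elements of $\Ui_{\A'}$ itself, not a modification of the quotient), so an algebra automorphism preserving $\Ui_{\A'}$ automatically descends to a Poisson automorphism with no further bookkeeping.
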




Our construction of the Poisson automorphism $\sigma_i^\imath$ is purely algebraic, and it is natural to expect a geometric description of these symmetries. The next main result of the paper settles this problem for quasi-split cases.

\begin{thmintro}\label{thm:3}(Proposition~\ref{prop:spg}, Theorem~\ref{thm:iw})
    Assume that $\I_\bullet=\emptyset$. Then there exist a Poisson anti-group involution $\Theta:G^*\rightarrow G^*$ and an embedding $\psi:K^\perp\backslash G^*\rightarrow G^*$, such that $\psi$ is an isomorphism onto the identity component of the $\Theta$-fixed locus of $G^*$. For any $i\in\I$, the map $\sigma_i^\imath$ is the unique Poisson automorphism on $K^\perp\backslash G^*$ such that the diagram  
    \begin{equation}
        \begin{tikzcd}
            & K^\perp\backslash G^* \arrow[r,"\psi"] \arrow[d,"\sigma_i^\imath"'] & G^* \arrow[d,"\sigma_{\bs_i}"] \\ & K^\perp\backslash G^* \arrow[r,"\psi"] & G^*   
        \end{tikzcd}
    \end{equation}
    commutes.
\end{thmintro}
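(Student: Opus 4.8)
The statement decomposes into the construction of the pair $(\Theta,\psi)$ (Proposition~\ref{prop:spg}) and the verification of the commuting square (Theorem~\ref{thm:iw}), and I would treat these in turn.

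\emph{Construction of $\Theta$ and $\psi$.} The key point is that, although the quasi-split involution $\theta$ is an automorphism of the Lie algebra $\g$, it is an anti-automorphism of the Lie bialgebra $(\g,\delta)$: since $\theta$ equals, up to an inner twist by a torus element, the composite of the Chevalley involution with the diagram automorphism $\tau$, one checks that $(\theta\otimes\theta)r=r_{21}$ for the standard classical $r$-matrix, so that the dual map $\theta^\vee$ on $\g^{*}=\mathrm{Lie}(G^{*})$ is a Lie algebra anti-automorphism with $(\theta^\vee)^{2}=\mathrm{id}$. Then $-\theta^\vee$ is a genuine automorphism, integrating to a Poisson--Lie automorphism $\Theta'$ of $G^{*}$, and I set $\Theta=\mathrm{inv}\circ\Theta'$, a Poisson anti-group involution; equivalently $\Theta$ arises as the semiclassical limit of a $\theta$-twisted Chevalley-type involution of $\U_{\A'}$ (an algebra automorphism reversing the coproduct), which also makes its compatibility with the integral form transparent. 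A computation with the invariant form identifies the $(-1)$-eigenspace of $\theta^\vee$ with $\mathfrak{k}^\perp$, so $K^\perp$ and $(G^{*})^{\Theta'}$ share their identity component, while the $(+1)$-eigenspace has dimension $\dim\mathfrak{k}=\dim(K^\perp\backslash G^{*})$. I would then define the Cartan-type map $\psi(K^\perp g)=(\Theta(g)\,g)^{-1}$; it is well defined because $\Theta(k)=k^{-1}$ for $k\in K^\perp$, and $\Theta(\psi(g))=\psi(g)$ places $\mathrm{im}\,\psi$ inside $(G^{*})^\Theta$. Injectivity of $\psi$, and the identification of $\mathrm{im}\,\psi$ with the identity component $(G^{*})^\Theta_\circ$, then follow as in the classical Cartan embedding: $d\psi$ is injective by the eigenspace decomposition, so $\psi$ is an immersion of equidimensional varieties, hence an isomorphism onto the identity component.

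\emph{The commuting square.} Uniqueness of $\sigma_i^\imath$ is formal, since $\psi$ is injective and thus $\psi\circ\alpha=\psi\circ\beta$ forces $\alpha=\beta$; the content is the identity $\sigma_{\bs_i}\circ\psi=\psi\circ\sigma_i^\imath$, equivalently $\psi^{*}\circ\sigma_{\bs_i}^{*}=(\sigma_i^\imath)^{*}\circ\psi^{*}$. The plan is to lift this to the DCKP-integral forms and specialize $q^{1/2}\mapsto1$. First, I would realize the comorphism $\psi^{*}\colon\C[G^{*}]\twoheadrightarrow\C[K^\perp\backslash G^{*}]$ (surjective, as $\psi$ is a closed embedding onto $(G^{*})^\Theta_\circ$) as the semiclassical limit of an $\A'$-algebra homomorphism $\Phi\colon\U_{\A'}\to\Ui_{\A'}$ --- the quantization of the formula $g\mapsto(\Theta(g)g)^{-1}$, assembled from the comultiplication, the antipode, and the $\theta$-twisted involution above, with image in the coideal $\Ui$ (one expects $\Phi$ to be controlled by the quasi-$K$-matrix). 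Second, I would invoke from \cite{WZ23} the relationship between the relative braid operator $\TT_i$ on $\Ui_{\A'}$ and Lusztig's operator $T_{w_i}$ on $\U_{\A'}$ --- here $w_i=\bs_i\in W^\circ\subset W$ is the relative reflection (namely $s_i$ when $\tau(i)=i$, and $s_is_{\tau(i)}$ when $\tau(i)\neq i$) --- which differs from $T_{w_i}$ by explicit corrections supported on the $\tau$-orbit of $i$ and coming from the rank-one quasi-$K$-matrix. Combining these, the quantum square $\TT_i\circ\Phi=\Phi\circ T_{w_i}$ reduces to a finite check on generators of $\U_{\A'}$; specializing --- using that $\sigma_{\bs_i}$ is the $q^{1/2}\mapsto1$ limit of $T_{w_i}$ on $\C[G^{*}]$ \cite{DCKP92} and $\sigma_i^\imath$ that of $\TT_i$ (Theorem~\ref{thm:rbs}) --- then yields the diagram. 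The localization inverting $1+q$ and the quantum Cartan integers, and the PBW-rescaling of $\U_{\A'}$, are harmless here by the earlier sections.

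The step I expect to be the main obstacle is the quantum equivariance $\TT_i\circ\Phi=\Phi\circ T_{w_i}$: both $\Phi$ and $\TT_i$ carry lower-order corrections governed by the quasi-$K$-matrix, and one must show they are compatible. Since $\Phi$, $\TT_i$ and $T_{w_i}$ are built from data supported on the $\tau$-orbit of $i$, this is morally a rank-one computation, but controlling the interaction with the torus part and with $\tau$ is where the real difficulty lies. Should this route prove unwieldy, a more computational alternative bypasses $\Phi$: using the polynomial root-vector generators of $\P$ from Theorem~\ref{thm:PBW}, the explicit action of $\sigma_i^\imath$ on them, and the De Concini--Kac--Procesi formulas for $\sigma_{\bs_i}$ on $\C[G^{*}]$, one verifies $\sigma_{\bs_i}\circ\psi=\psi\circ\sigma_i^\imath$ directly on a generating set, which again collapses to the same rank-one identity.
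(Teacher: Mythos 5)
Your construction of $\Theta$ and $\psi$ matches the paper's: $\Theta(b_1,b_2)=(\theta(b_2)^{-1},\theta(b_1)^{-1})$ is exactly the anti-involution used there, and the paper proves it is Poisson precisely by your ``equivalently'' route, identifying $\Theta^*$ with the specialization of the algebra involution $E_i\mapsto F_{\tau i}$, $F_{\tau i}\mapsto E_i$, $K_i\mapsto K_{\tau i}^{-1}$ of $\U_{\A'}$, rather than via the $r$-matrix (the paper's $\psi$ is $K^\perp g\mapsto\Theta(g)g$, without your inverse; this is immaterial). One step in your first half is a genuine gap: ``$d\psi$ is injective, so $\psi$ is an immersion of equidimensional varieties, hence an isomorphism onto the identity component'' does not give global injectivity. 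What is needed is that the fibre of $\psi$ over $e$, namely $\{g\in G^*\mid\Theta(g)=g^{-1}\}$, equals $K^\perp$ exactly; your eigenspace computation only identifies Lie algebras and cannot exclude extra components. The paper computes this stabilizer explicitly as $\{(b_1,b_2)\mid b_1=\theta(b_2)\}\cong H'\times U^-$, observes it is connected (hence equals $K^\perp$ by definition), and then concludes by $G^*$-equivariance of $\psi$ together with Xu's theorem that the identity component of the $\Theta$-fixed locus is a single orbit.

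For the commuting square, your primary route --- quantizing $\psi$ to an algebra map $\Phi:\U_{\A'}\to\Ui_{\A'}$ and proving $\TT_i\circ\Phi=\Phi\circ T_{\bs_i}$ --- is not carried out and is speculative: no such $\Phi$ appears in the paper, and it is unclear that $g\mapsto\Theta(g)g$ admits an algebra-homomorphism quantization with image in $\Ui_{\A'}$. Your fallback is what the paper actually does: it identifies $Q\cong H^{\theta0}\times U^-$, computes $\psi^*(\alpha_j)=\overline{K_jK_{\tau j}^{-1}}$ and $\psi^*(\chi_j^-)=\overline{B_j}$, and verifies $f(\sigma_{\bs_i}\psi(x))=f(\psi\sigma_i^\imath(x))$ for $f=\alpha_j$ and $f=\chi_j^-$ using the De Concini--Kac--Procesi formula for $\sigma_{\bs_i}$. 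Even on that route you omit two ingredients: (i) one must first prove $\sigma_{\bs_i}\circ\Theta=\Theta\circ\sigma_{\bs_i}$ (the paper does this at the quantum level via $T_{\bs_i}\circ\omega\tilde{\tau}=\omega\tilde{\tau}\circ T_{\bs_i}$) so that $\sigma_{\bs_i}$ preserves $Q=\psi(\X)$, without which the diagram cannot commute; and (ii) for $j\notin\{i,\tau i\}$ the comparison of $\chi_j^-(\sigma_{\bs_i}\psi(x))$ with $\overline{\TT_i(B_j)}(x)$ rests on Proposition~\ref{prop:piT}, i.e.\ $\pi^\imath(\TT_i(B_j))=\pi_P(T_{\bs_i}(B_j))$, which is exactly where the lower-order corrections you worry about are shown to die under the projection $\pi_P$. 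Your uniqueness argument (injectivity of $\psi$) is the same as the paper's.
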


Let us remark that the pair $(G^*,\Theta)$ is a \emph{symmetric Poisson group} in the sense of Xu \cite{Xu03}, and hence $\psi$ identifies $K^\perp\backslash G^*$ with a \emph{Dirac submanifold} of $G^*$ (cf.\cite{Xu03}).

We describe in Section~\ref{sec:braidex} the geometric relative braid group symmetries on $K^\perp\backslash G^*$ explicitly for quasi-split types, and computed in Section~\ref{sec:exa} the explicit Poisson brackets on $\P$ in various examples in terms of the polynomial generators arising from the rescaled PBW basis of $\U_{\A'}$.

The Poisson algebra $\P$ and its geometric model $K^\perp\backslash G^*$ provide a large family of algebraic Poisson structures, some of which have appeared sporadically in the literature. When the symmetric pair is $(\mathfrak{sl}_n,\mathfrak{so}_n)$, Boalch \cite{Boa01} showed that the associated Poisson brackets coincide with the Dubrovin--Ugaglia Poisson brackets, which arises from the study of Frobenius manifolds \cite{Du94,Uga99}.

In their study of the R-matrix realization for quantum symmetric pairs, Molev and Ragoucy \cite{MR08} constructed braid group symmetries on the Poisson algebras associated with $(\mathfrak{sl}_n,\mathfrak{so}_n)$, and investigated the Poisson algebras associated with $(\mathfrak{sl}_{2n}, \mathfrak{sp}_{2n})$. More recently, Zhang-Lin-Zhang \cite{ZLZ25} studied the Poisson structure and braid group symmetries associated with $(\mathfrak{sp}_{2n}, \mathfrak{gl}_n)$, generalizing Molev-Ragoucy's approach. 

Our work provides a unified and intrinsic construction for (relative) braid group symmetries symmetries on the Poisson algebras associated to arbitrary symmetric pairs. These symmetries and polynomial generators for the Poisson algebras will serve as tools for future study of the Poisson structure on $K^\perp\backslash G^*$.

When the symmetric pair is of diagonal type, our Theorem \ref{thm:2} recovers braid group symmetries constructed by De Concini--Kac--Procesi \cite{DCKP92, DCP93} on $G^*$; see Example~\ref{ex:diagonal} and Section~\ref{sec:braidex}(ii). 

\subsection{Future work} 

The DCKP-type integral forms are closely related to the quantum groups at roots of unity. In \cite[Chapter~5]{DCP93}, the authors showed that the Poisson algebra $\C[G^*]$ can be identified with a central subalgebra of $\U_\epsilon$, the quantum group at roots of unity. This central subalgebra is crucial in the study of representations of $\U_\epsilon$ (cf. \cite{DK90}). In a sequel, we will study the relation between the Poisson algebra $\P$ and the $\imath$quantum group at roots of unity.

The rescaled PBW bases are crucial for the cluster realization of quantum groups (see, for example, \cite{Sh22a}) and $\imath$quantum groups of type $\textrm{AI}$ \cite{So23}. We expect that our results provide fundamental ingredients for the cluster realization of $\imath$quantum groups of general types.

In the quantum group case, Boalch \cite{Boa02} obtained a description of the braid group symmetries on $G^*$ from the perspective of connections on Riemann surfaces. It would be interesting to see an analogous description for  $K^\perp\backslash G^*$ in the case of quantum symmetric pairs.

\subsection{Organization} The paper is organized as follows. In Section \ref{sec:pre}, we collect known results on quantum groups, quantum symmetric pairs, and Poisson-Lie groups which are needed in the paper. In Section \ref{sec:intb}, we show that relative braid group symmetries preserve the DCKP-type integral form $\Ui_{\A'}$ and establish PBW bases on this integral form. In Section \ref{sec:ppa}, we study the induced relative braid group symmetries on the Poisson homogeneous space $K^\perp\backslash G^*$. In Section \ref{sec:exa}, we provide in various examples the explicit Poisson brackets on $K^\perp\backslash G^*$ in terms of polynomial generators arising from PBW bases of $\Ui_{\A'}$. In Appendix \ref{app:A}, we provide explicit formulas for real rank 1 root vectors and verify that they belong to the integral form. 

\vspace{.2cm}
\noindent {\bf Acknowledgement: } The authors thank Huanchen Bao for helpful discussions. WZ thanks Ming Lu for discussions on the PBW basis of $\imath$quantum groups. JS is partially supported by Huanchen Bao’s MOE grant A0004586-00-00 and A-0004586-01-00. WZ is partially supported by the New Cornerstone Foundation through the New Cornerstone Investigator grant awarded to Xuhua He.

\section{Preliminaries}\label{sec:pre}
\subsection{Quantum groups}\label{sec:qgp}


Let $\g$ be a semisimple Lie algebra associated to the Cartan matrix $(a_{ij})_{i,j\in \I}$. Let $W$ be the Weyl group with simple reflections $s_i,i\in \I$ and $\cR$ be the root system for $\g$. Fix a set of simple roots $\{\alpha_i|i\in \I\}$ and set $\cR^+$ to be the corresponding set of positive roots. Let $\cR^-=-\cR^+$. For $w\in W$, we set $\cR^+(w)=\cR^+\cap w(\cR^-)$. Let $\{h_i\mid i\in\I\}$ be the set of simple coroots. 
Let $\{\varepsilon_i|i\in \I\}$ be the set of coprime positive integers such that $\varepsilon_ia_{ij}=\varepsilon_j a_{ji}$ for all $i,j\in \I$. 

Let $q$ be an indeterminate and $\Q(q)$ be the field of rational functions in $q$ with coefficients in $\C$, the field of complex numbers. Fix a square root $q^{1/2}$ of $q$ in the algebraically closed field of $\Q(q)$. 
We denote
\[
q_i:=q^{\epsilon_i}, \qquad \forall i\in \I.
\]
Denote, for $r,m \in \N,i\in \I$,
\[
 [r]_i =\frac{q_i^r-q_i^{-r}}{q_i-q_i^{-1}},
 \quad
 [r]_i!=\prod_{s=1}^r [s]_i, \quad \qbinom{m}{r}_i =\frac{[m]_i [m-1]_i \ldots [m-r+1]_i}{[r]_i!}.
\]

The Drinfeld-Jimbo quantum group $\U$ is defined to be the $\Q(q^{1/2})$-algebra generated by $E_i,F_i, K_i^{\pm 1}$ for $i\in\I$, subject to the following relations:
\begin{align}
&K_i K_i^{-1}=K_i^{-1} K_i=1,\; K_{i}K_{j}=K_{j} K_i, \label{eq:KK}
\\
&K_{j}E_i=q_j^{a_{ji}} E_i K_j, \quad K_j F_i=q_j^{-a_{ji}}F_i K_j,\\
&[E_i,F_j]=\delta_{ij}(q_i-q_i^{-1})(\tK_i^{-1}-\tK_i), \label{Q4}
\end{align}
and the quantum Serre relations, for $i\neq j \in \I$,
\begin{align}
& \sum_{r=0}^{1-a_{ij}} (-1)^r  \qbinom{1-a_{ij}}{r}_i E_i^{r} E_j  E_i^{1-a_{ij}-r}=0,
  \label{eq:serre1} \\
& \sum_{r=0}^{1-a_{ij}} (-1)^r  \qbinom{1-a_{ij}}{r}_i F_i^{r} F_j  F_i^{1-a_{ij}-r}=0.
  \label{eq:serre2}
\end{align}

\begin{remark}\label{rmk:bU}
    Our convention of generators is similar to \cite{Sh22b,So23} and differs from the standard ones. Let $\mathbf{U}$ be the quantum group in \cite{Lus93} with generators $\mathbf{F}_i$, $\mathbf{E}_i$, $\mathbf{K}_i^{\pm1}$, for $i\in \I$. Then one has an algebra isomorphism $\U\rightarrow\mathbf{U}$, given by 
    \[
    F_i\mapsto q_i^{1/2}(q_i-q_i^{-1})\bF_i,\quad E_i\mapsto q_i^{-1/2}(q_i^{-1}-q_i)\bE_i,\quad K_i\mapsto \mathbf{K}_i,\quad (i\in\I).
    \]
\end{remark}

Let $\U^+, \U^0, \U^-$ be the subalgebras of $\U$ generated by $\{E_i\mid i\in \I\}, \{K_i, K_i^{-1}\mid i\in \I\}, \{F_i\mid i\in \I\}$, respectively. There is a natural triangular decomposition
\begin{equation}\label{eq:tri}
\U\cong \U^+\otimes \U^0\otimes \U^-
\end{equation}
 We also have a natural $\Z \I$-grading on $\U$ given by
\begin{align}\label{eq:grad}
\deg F_i = \alpha_i,\quad \deg E_i = -\alpha_i,\quad \deg K_i = 0,\qquad (i\in \I).
\end{align}

For $i\in\I$, let $T_i$ be Lusztig's braid group symmetries on $\U$ \cite{Lus93}, which we recall as follows.
\begin{align*}
    &T_i(E_i)= q_i^{-1}F_iK_i,\qquad T_i(F_i)= q_iK_i^{-1}E_i;
    \\
    &T_i(E_j)=\frac{1}{q_i^{a_{ij}/2}(q_i^{-1}-q_i)^{-a_{ij}}[-a_{ij}]_i!}\sum_{r=0}^{-a_{ij}}(-1)^rq_i^{-r} \qbinom{-a_{ij}}{r}_i E_i^{-a_{ij}-r}E_jE_i^{r}\quad \text{for }j\neq i;\\
    & T_i(F_j)=\frac{1}{q_i^{-a_{ij}/2}(q_i-q_i^{-1})^{-a_{ij}}[-a_{ij}]_i!}\sum_{r=0}^{-a_{ij}}(-1)^rq_i^{r} \qbinom{-a_{ij}}{r}_i F_i^{r}F_jF_i^{-a_{ij}-r}\quad \text{for }j\neq i;\\
    & T_i(K_j)=K_{j}K_i^{-a_{ij}}.
\end{align*}

\begin{remark}
$T_i$ corresponds to the symmetry $T_{i,+1}''$ in \cite{Lus93}. The convention of braid group symmetry is different from \cite{So23}, but compatible with \cite{Jan96}.
\end{remark}
We define $T_w:= T_{i_1}\cdots T_{i_r},$ for a reduced expression $w = s_{i_1}\cdots s_{i_r}$ of $w \in W$.
Fix a reduced expression ${w_0}=s_{i_1} \cdots s_{i_l}$ for the longest element $w_0\in W$. For $1\le k\le l$, we set
\begin{align}
\label{eq:F-root}
&F_{\beta_k}=T_{i_1}T_{i_2}\cdots T_{i_{k-1}}(F_{i_k}),\qquad E_{\beta_k}=T_{i_1}T_{i_2}\cdots T_{i_{k-1}}(E_{i_k}),
\end{align}
to be the root vectors associated with the root $\beta_k=s_{i_1}s_{i_2}\cdots s_{i_{k-1}}(\alpha_{i_k})\in \cR^+$.
For $\ba=(a_1,\ldots, a_l)\in \N^l$, we set
\begin{equation}
  F^{\ba}=F_{\beta_1}^{a_1} \cdot F_{\beta_2}^{a_2}\cdots F_{\beta_l}^{a_l},
  \qquad 
  E^{\ba}=E_{\beta_l}^{a_1} \cdot E_{\beta_{l-1}}^{a_2}\cdots E_{\beta_1}^{a_1}.
\end{equation}

Set $\A=\Z[q^{1/2},q^{-1/2}]$. Let $\U_{\A}^- $ (resp., $\U_{\A}^+$) be the $\A$-module of $\U^-$ (resp., $\U^+$) spanned by $F^{\ba},a\in \N^\ell$ (resp., $E^{\ba},a\in \N^\ell$). It is known that $\U_{\A}^-, \U_{\A}^+ $ are independent of the choice of the reduced expression for $w_0$ and hence are $\A$-subalgebras. Let $\U_{\A}^0$ be the $\A$-subalgebra of $\U$ generated by $K_i,i \in \I$. Set $\U_\A $ to be the $\A$-subalgebra generated by $\U_{\A}^-,\U_{\A}^0,\U_{\A}^+$. For $\mu=\sum_{i\in \I } b_i \alpha_i\in\Z\I$, set $K_\mu=\prod_{i\in \I} K_i^{b_i}$.

\begin{proposition}[\cite{BG17}]
\label{prop:dec}
There is an isomorphism of $\A$-modules given by multiplication
\begin{align*}
\U_{\A}\cong\U_{\A}^-\otimes \U_{\A}^0 \otimes \U_{\A}^+.
\end{align*}
The set of monomials
\begin{align}
\{F^{\ba} K_{\mu} E^{\bc}|\mu\in \Z\I,\ba,\bc \in \N^l \}
\end{align}
forms an $\A$-basis of $\U_\A$, which is called the (rescaled) \emph{PBW} basis.
\end{proposition}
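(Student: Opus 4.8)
The plan is to bootstrap this from the classical PBW theorem for $\U$ over the field $\Q(q^{1/2})$, using the rescaled presentation of Remark~\ref{rmk:bU} to keep everything integral. Over $\Q(q^{1/2})$ the triangular decomposition \eqref{eq:tri}, together with the Levendorskii--Soibelman PBW bases $\{F^{\ba}\}$, $\{E^{\bc}\}$, $\{K_\mu\}$ of $\U^-$, $\U^+$, $\U^0$ respectively, shows that the monomials $F^{\ba}K_\mu E^{\bc}$ form a $\Q(q^{1/2})$-basis of $\U$. Since each of $\U_\A^-$, $\U_\A^0$, $\U_\A^+$ is $\A$-free on its PBW basis and $\A\subset\Q(q^{1/2})$, $\U_\A\subset\U$, this at once gives that the monomials $F^{\ba}K_\mu E^{\bc}$ are $\A$-linearly independent in $\U_\A$ and that the multiplication map $\U_\A^-\otimes_\A\U_\A^0\otimes_\A\U_\A^+\to\U_\A$ is injective. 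Hence the entire content to be proved is \emph{surjectivity}: that the $\A$-span $M:=\sum_{\ba,\mu,\bc}\A\,F^{\ba}K_\mu E^{\bc}\subseteq\U_\A$ is all of $\U_\A$.

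For this I would carry out the De Concini--Kac straightening argument \cite{DCP93,BG17} over $\A$. Since $\U_\A$ is generated as an $\A$-algebra by the $K_i^{\pm1}$ together with the root vectors $F_{\beta_k}$ and $E_{\beta_k}$, and $M\ni 1$, it suffices to show $M$ is closed under left multiplication by each of these. Closure under the $F_{\beta_k}$ is immediate because $\U_\A^-$ is an $\A$-subalgebra with $\A$-basis $\{F^{\ba}\}$ (recalled before the proposition); closure under the $K_i^{\pm1}$ follows from \eqref{eq:KK} and $K_jE_i=q_j^{a_{ji}}E_iK_j$, $K_jF_i=q_j^{-a_{ji}}F_iK_j$, whose structure constants lie in $\A$. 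The substantive point is closure under the $E_{\beta_k}$, which --- using the integral $K$-relations and the $\A$-algebra structure of $\U_\A^+$ to absorb the $K_\mu$- and $E^{\bc}$-factors --- reduces to the claim that $E_{\beta_k}\,F^{\ba}\in M$ for all $k$ and $\ba$. One commutes $E_{\beta_k}$ past the $F_{\beta_m}$'s one factor at a time; the decisive ingredient is that every commutation that arises has coefficients in $\A$. The basic instance, obtained by induction from \eqref{Q4}, is
\[
[E_i,F_i^{n}] \;=\; (q_i^{n}-q_i^{-n})\,F_i^{n-1}\bigl(q_i^{n-1}K_i^{-1}-q_i^{-(n-1)}K_i\bigr)\;\in\;\U_\A^-\U_\A^0,
\]
which is $\A$-integral precisely because $q_i^{n}-q_i^{-n}\in\A$; the general ``mixed'' straightening, expressing $E_{\beta_k}F_{\beta_m}$ as a $q$-power multiple of $F_{\beta_m}E_{\beta_k}$ plus strictly lower PBW-ordered terms with all coefficients in $\A$, follows from this together with the $T_i$-equivariance of the root-vector construction and the stability of $\U_\A$ under the $T_i$ \cite{DCP93}. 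With these relations available, any monomial in the generators rewrites in finitely many terminating steps into an $\A$-combination of the $F^{\ba}K_\mu E^{\bc}$, so $M=\U_\A$; together with the linear independence from the first step, the $F^{\ba}K_\mu E^{\bc}$ then form an $\A$-basis. Equivalently, one may organize the argument via a multiplicative filtration on $\U_\A$ whose associated graded $\gr\U_\A$ is a $q$-commutative polynomial algebra over $\A$, $\A$-free on the evident monomials, to which the PBW monomials of $\U_\A$ lift.

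The single genuine obstacle is the integrality invoked throughout: every reordering must produce coefficients in $\A=\Z[q^{\pm1/2}]$, and this fails for Lusztig's presentation --- there $[\bE_i,\bF_i]$ carries a denominator $q_i-q_i^{-1}$ and the braid operators $T_i$ carry denominators $[n]_i!$, so the analogously defined lattice is not even a subring. The rescaling $F_i\mapsto q_i^{1/2}(q_i-q_i^{-1})\bF_i$, $E_i\mapsto q_i^{-1/2}(q_i^{-1}-q_i)\bE_i$ of Remark~\ref{rmk:bU} is tailored exactly to absorb these denominators, making \eqref{eq:KK}--\eqref{eq:serre2} and all derived straightening relations $\A$-integral; once that is established, the De Concini--Kac argument applies over $\A$ with essentially no change.
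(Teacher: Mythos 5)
The paper offers no proof of this statement: it is recalled verbatim from \cite{BG17} (and goes back to De Concini--Kac--Procesi, \cite[\S 10, \S 12]{DCP93}), so there is no in-paper argument to compare against. Your sketch is the standard proof from those references, and it correctly isolates the two real points: linear independence is free from the PBW theorem over $\Q(q^{1/2})$, and the whole content is that the straightening relations have coefficients in $\A$ after the rescaling of Remark~\ref{rmk:bU} --- your computation of $[E_i,F_i^n]=(q_i^n-q_i^{-n})F_i^{n-1}(q_i^{n-1}K_i^{-1}-q_i^{-(n-1)}K_i)$ is correct and is exactly the place where Lusztig's normalization would fail. One caveat: where you derive the general mixed straightening of $E_{\beta_k}F_{\beta_m}$ from ``stability of $\U_\A$ under the $T_i$,'' you risk circularity, since in \cite{DCP93} that stability is itself verified \emph{using} the PBW basis of $\U_\A$; the non-circular routes are either the filtration/associated-graded argument you mention in your last sentence (show $\gr\U_\A$ is a $q$-commutative polynomial $\A$-algebra, which only needs the leading terms of the Levendorskii--Soibelman relations), or a direct induction on root height establishing that $[E_\alpha,F_\beta]$ for $\alpha\neq\beta$ lies in the $\A$-span of lower-ordered monomials. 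With that substitution your argument is complete and is essentially the proof in the cited sources.
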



\subsection{Quantum symmetric pairs}\label{sec:iqg}

A Satake diagram $(\I=\bI \cup \wI,\tau)$ (cf. \cite{Ko14}) consists of a partition $\bI\cup \wI$ of $\I$, and an involution $\tau$ of $\I$ (possibly $\tau=\Id$) such that
\begin{itemize}
\item[(1)] $a_{ij}=a_{\tau i,\tau j}$ for $i,j\in\I$, 
\item[(2)] $\tau(\bI)=\bI$,
\item[(3)]
 $\bw(\alpha_j) = - \alpha_{\tau j}$ for $j\in \bI$,
\item[(4)]
 If $j\in \wI$ and $\tau j =j$, then $\langle \rho_{\bullet}^\vee,\alpha_j\rangle\in \Z$.
\end{itemize}
Here $\bw$ is the longest element in the Weyl group $W_\bullet$ associated to $\bI$ and $\rho_{\bullet}^\vee$ is the half sum of positive coroots in the subcoroot system generated by $\I_\bullet$. 

A Satake diagram  $(\I=\bI \cup \wI,\tau)$ is called quasi-split if $\bI=\emptyset$. In this case, we denote it by $(\I,\tau)$.

A symmetric pair $(\g,\g^\theta)$ (of finite type) consists of a semisimple Lie algebra $\g$ and the fixed point subalgebra $\g^\theta$ for an involution $\theta$ on $\g$. For each Satake diagram, there is an associated involution $\theta$ on $\g$, whose action on the weight lattice is given by $\theta=-\bw \circ \tau$. The irreducible symmetric pairs are classified by Satake diagrams.


Given a Satake diagram $(\I=\wI\cup\bI,\tau )$, the $\imath$quantum group $\Ui$ is the $\Q(q^{1/2})$-subalgebra of $\U$ generated by
\begin{align}
\label{def:iQG}
\begin{split}
&B_i=F_i-c_i q_i^{-\langle h_i,\bw \alpha_{\tau i} \rangle/2} T_{\bw}( E_{\tau i}) K_i^{-1}, \qquad k_i = K_i K_{\tau i}^{-1} \qquad (i\in \wI),
\\
&E_j,\quad B_j=F_j,\quad K_j^{\pm 1} \qquad (j\in \bI).
\end{split}
\end{align}

Here $c_i\in\{\pm 1\}$ ($i\in\I_\circ$) such that 
\begin{align}\label{def:ci}
\begin{split}
 c_i c_{\tau i}&=(-1)^{\langle 2\rho_\bullet^\vee,\alpha_i\rangle},\quad \text{ for } i\in \I_\circ;
 \\
c_i&=c_{\tau i}, \quad \text{ if } \langle h_i,\theta(\alpha_i)\rangle=0.
\end{split}
\end{align}

\begin{remark}[\text{cf. \cite[Remark 2.5]{So23}}]\label{rmk:bUi}
The $\imath$quantum group $\bUi_\bvs$ introduced in \cite{Let02,Ko14} with parameters $\bvs=(\vs_i)_{i\in \wI}\in \big(\C(q^{1/2})^{\times}\big)^{\wI}$ is the subalgebra of $\bU$ generated by
\[
\bB_i=\bF_i+ \vs_i T_{\bw}( \bE_{\tau i}) \mathbf{K}_i^{-1}, \quad 
\mathbf{k}_i=\bK_i \bK_{\tau i}^{-1}, \quad \bE_j, 
\quad \bF_j, \quad\mathbf{K}_j^{\pm 1},
\]
for $i\in \wI,j\in \bI$. 
Set $\vs_i=c_i q_i^{-\langle h_i,w_\bullet\alpha_{\tau i}\rangle/2-1}$ for $i\in \wI$, and then the isomorphism in Remark~\ref{rmk:bU} induces an algebra isomorphism
\begin{align*}
\begin{split}
\Omega:&\Ui\rightarrow \bUi_\bvs\\
&B_i\mapsto q_i^{1/2}(q_i-q_i^{-1})\bB_i,\quad k_i\mapsto \mathbf{k}_i,\qquad (i\in \wI),\\
&E_j\mapsto q_j^{-1/2}(q_j^{-1}-q_j)\bE_j,\quad  F_j\mapsto q_j^{1/2}(q_j-q_j^{-1})\bF_j, \quad K_j \mapsto \mathbf{K}_j,\qquad (j\in \bI).
\end{split}
\end{align*}
In this paper, for the sake of integrality, we shall mainly consider $\Ui$, the $\imath$quantum group with these specific parameters.
Note that if $c_i=-1$, then $\vs_i$ is the distinguished parameter used in \cite{WZ23}.
\end{remark}

\begin{remark}
The value of $\langle 2\rho_\bullet^\vee,\alpha_i\rangle$ appearing in \eqref{def:ci} was calculated in \cite[\S 3.3]{BW18b}. When the rank 1 Satake subdiagram associated to $i$ is not of type AIV, $\langle 2\rho_\bullet^\vee,\alpha_i\rangle$ is always an even integer; hence, in these cases, one can freely choose $c_i$ to be $1$ or $-1$. Note also that the rank 1 Satake subdiagram associated to $i$ not being of type AIV is equivalent to saying that $\tau i = i$ or $\bw (\alpha_i)=\alpha_i$; see Table~\ref{table:localSatake}.
\end{remark}

For convenience, unless otherwise specified, we shall always assume that 
\begin{align} 
c_i=-1, \text{ if } \tau i = i \text{ or } \bw (\alpha_i)=\alpha_i.
\end{align}


We set $\U^{\imath 0}\subset \Ui$ to be the unital subalgebra generated by $k_i^{\pm1}$ for $i\in\wI$, and $K_j^{\pm1}$ for $j\in\I_\bullet$. Following \cite{So23,So24}, we define 
\begin{align}\label{def:UiA}
\Ui_{\A}:=\Ui\cap \U_{\A}.
\end{align}
As shown {\em loc. cit.}, $\Ui_{\A}$ can be view as a generalization of the De Cocini-Kac-Procesi integral form $\U_\A$ in the theory of quantum symmetric pairs.
We therefore call $\Ui_{\A}$ the {\em DCKP-type integral form} for the $\imath$quantum group $\Ui$.

\begin{proposition}\label{prop:Tj}
The algebra automorphisms $T_j$, $T_j^{-1}$ for $j\in \bI$ restricts to algebra automorphisms on $\Ui$. Moreover, $T_j$, $T_j^{-1}$ for $j\in \bI$ preserves $\Ui_{\A}$.
\end{proposition}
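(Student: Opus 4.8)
\textbf{Proof plan for Proposition~\ref{prop:Tj}.}

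The first assertion, that $T_j$ and $T_j^{-1}$ for $j\in\bI$ restrict to algebra automorphisms of $\Ui$, is essentially the statement that the braid group $\Br(W_\bullet)$ of the finite subsystem acts on the $\imath$quantum group; this is already known in the literature (e.g. Kolb--Pellegrini, or in the framework of Wang--Zhang \cite{WZ23}), but I would give a direct argument since $T_j(\Ui)=\Ui$ follows from checking that $T_j$ sends each generator of $\Ui$ in \eqref{def:iQG} into $\Ui$. Concretely: for $j\in\bI$ the generators $E_j=E_j$, $B_j=F_j$, $K_j^{\pm1}$ lie in the Levi-type subalgebra $\U_\bullet$ on which the usual Lusztig symmetries act, so $T_j$ permutes these among generators of $\U_\bullet\subset\Ui$. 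For a generator $B_i$ with $i\in\wI$, one writes $B_i=F_i - c_i q_i^{-\langle h_i,\bw\alpha_{\tau i}\rangle/2}T_{\bw}(E_{\tau i})K_i^{-1}$ and uses the known commutation of $T_j$ ($j\in\bI$) with $T_{\bw}$ up to a shorter Weyl-group element, together with the fact that $s_j(\alpha_i)$ is again a root supported away from changing the $\wI$-structure (here one invokes that $W_\bullet$ preserves the $\wI$-part appropriately); the upshot is that $T_j(B_i)$ is again an element of $\Ui$, expressible via the same root-vector recipe. Similarly $T_j(k_i)=T_j(K_iK_{\tau i}^{-1})=K_{s_j(\alpha_i)}K_{s_j(\alpha_{\tau i})}^{-1}\in\U^{\imath 0}$. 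Since $T_j$ is an algebra automorphism of $\U$ carrying a generating set of $\Ui$ into $\Ui$, and the same holds for $T_j^{-1}$, we get $T_j(\Ui)=\Ui$.

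For the integrality statement, the strategy is to combine the above with Proposition~\ref{prop:dec}: $T_j$ and $T_j^{-1}$ already preserve $\U_\A$ by De Concini--Kac--Procesi (this is the classical fact that Lusztig's symmetries preserve the rescaled DCKP form, since $T_j$ permutes the PBW root vectors up to $\A^\times$-scalars and reduced-word changes, cf.\ \cite{DCP93, BG17}). Therefore
\[
T_j(\Ui_\A)=T_j(\Ui\cap\U_\A)=T_j(\Ui)\cap T_j(\U_\A)=\Ui\cap\U_\A=\Ui_\A,
\]
using that $T_j$ is a bijection on $\U$, and likewise for $T_j^{-1}$. So the integrality of $T_j$ on $\Ui_\A$ is a formal consequence of the two facts that $T_j$ preserves $\Ui$ and preserves $\U_\A$.

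The main obstacle is the first part, specifically verifying $T_j(B_i)\in\Ui$ for $i\in\wI$, $j\in\bI$: one must control how $T_j$ interacts with the element $T_{\bw}(E_{\tau i})K_i^{-1}$, which requires the braid relations in $W$ relating $s_j\bw$ to $\bw s_{j'}$ for an appropriate $j'\in\bI$ (this uses $\tau(\bI)=\bI$ and property (3) of the Satake diagram) and a bookkeeping of the scalar $q_i^{-\langle h_i,\bw\alpha_{\tau i}\rangle/2}$ under $s_j$. I expect this to be the only genuinely computational point; everything else (the Levi generators, the torus part, and the passage from $\Ui$ to $\Ui_\A$) is formal. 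An alternative, cleaner route for the first part is to quote the relative braid group action of \cite{WZ23}, under which $T_j$ for $j\in\bI$ is realized as part of $\mathbf{T}$-operators preserving $\Ui$; I would mention this as a remark but prefer the hands-on check for self-containedness.
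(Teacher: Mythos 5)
Your proposal is correct and follows essentially the same route as the paper: the first assertion is exactly \cite[Theorem 4.2]{BW18b} (which the paper simply cites rather than re-verifying $T_j(B_i)\in\Ui$ by hand), and the integrality statement is then the same formal consequence of $T_j^{\pm1}(\Ui)=\Ui$ together with $T_j^{\pm1}(\U_\A)\subset\U_\A$ applied to $\Ui_\A=\Ui\cap\U_\A$. The only difference is that you sketch a direct generator check for the first part, which is precisely the content of the cited theorem.
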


\begin{proof}
The first statement follows from \cite[Theorem 4.2]{BW18b}. The second statement follows from the first statement and $T_j(\U_{\A}) \subset \U_{\A}$.
\end{proof}

\subsection{A projection map}\label{sec:proj}

For a quantum symmetric pair $(\U,\Ui)$, we shall consider a localization $\A'$ of $\A=\mathbb{Z}[q^{1/2},q^{-1/2}]$ defined as follows. If  $(\I=\wI\cup\bI,\tau)$ contains a subdiagram of type AIV$_n$, set 
$$
\A'=\A[\sqrt{(-1)^n}, (1+q)^{-1}, [\varepsilon_i]!^{-1}\mid i\in\I],
\qquad \Z'=\Z[\sqrt{(-1)^n}, 2^{-1},\varepsilon_i^{-1}\mid i\in\I]
$$
to be the subrings of $\Q(q^{1/2})$ and $\C$, respectively.
 If  $(\I=\wI\cup\bI,\tau)$ does not contain a subdiagram of type AIV, set
 $$
\A'=\A[(1+q)^{-1}, [\varepsilon_i]!^{-1}\mid i\in\I],
\qquad
\Z'=\Z[2^{-1},\varepsilon_i^{-1}\mid i\in\I]
$$ 
to be the subrings of $\Q(q^{1/2})$ and $\C$, respectively. 
Then $\Z'$ can be viewed as an $\A'$-module via the ring homomorphism $\A'\rightarrow \Z', q\mapsto 1$. We define $\U_{\A'}, \U^\pm_{\A'}$, $\U_{\A'}^0$ and $\Ui_{\A'}$ via the evident base change. The rings $\A'$ and $\Z'$ will be crucial in the later part.

\begin{remark}
When the underlying Satake diagram is of quasi-split ADE type, the ring $\A'$ is simplified as $\A'=\A[(1+q)^{-1}]$. 
\end{remark}


Let $\U^+(w_\bullet)\subset\U^+$ be the unital $\Q(q^{1/2})$-subalgebra generated by $E_i$, for $i\in\I_\bullet$. Let $\U^+(w_\bullet)^c=\U^+\cap T_{w_\bullet}(\U^+)$. We set $\U_{\A'}^+(w_\bullet)=\U^+(w_\bullet)\cap{\U_{\A'}}$ and $\U_{\A'}^+(w_\bullet)^c=\U^+(w_\bullet)^c\cap\U_{\A'}$. Fix a reduced expression of $w_0$ starting with a reduced expression of $w_\bullet$. By considering the PBW basis of $\U^+$ in Proposition \ref{prop:dec}, one concludes that the multiplication gives a decomposition $\U^+\cong\U^+(w_\bullet)\otimes\U^+(w_\bullet)^c$ as vector spaces, and moreover one has the decomposition $\U_{\A'}^+\cong\U_{\A'}^+(w_\bullet)\otimes\U_{\A'}^+(w_\bullet)^c$ as $\A'$-modules. Set 
$$
\pi^+_{w_\bullet}=(id)\otimes\epsilon:\U^+\rightarrow\U^+(w_\bullet),
$$ 
where $\varepsilon:\U^+(w_\bullet)^c\rightarrow\Q(q^{1/2})$ is the restriction of the counit map. It is clear that $\pi_{w_\bullet}^+$ restricts to $\U^+_{\A'}\rightarrow\U_{\A'}^+(w_\bullet)$. 

Let $\wItau$ be a fixed set of representatives for $\tau$-orbits on $\wI$, and let $\U^{0,c}\subset\U^0$ be the unital subalgebra generated by $K_i^{\pm1}$ for $i\in\wItau$. Recall the subalgebra $\U^{\imath0}$ from Section \ref{sec:iqg}. It is clear that $\U^0\cong\U^{\imath0}\otimes\U^{0,c}$ and $\U_{\A'}^0\cong\U_{\A'}^{\imath0}\otimes\U_{\A'}^{0,c}$, where $\U_{\A'}^{\imath0}=\U^{\imath0}\cap\U_{\A'}$ and $\U_{\A'}^{0,c}=\U^{0,c}\cap\U_{\A'}$. Under this decomposition of $\U^0$, we let 
$$
\pi^0=(id)\otimes\epsilon:\U^0\rightarrow\U^{\imath0}.
$$ It is clear that $\pi^0$ restricts to the map $\U^0_{\A'}\rightarrow\U^{\imath0}_{\A'}$.

Let $\U_P$ be the unital subalgebra of $\U$ generated by elements $E_j,F_i$ for $j\in\I_\bullet,i\in\I$ and $\U^{\imath0}$. Then \eqref{eq:tri} restricts to a triangular decomposition of $\U_P$, that is,
\begin{equation}\label{eq:trip}
\U_P\cong \U^+(w_\bullet)\otimes\U^{\imath0}\otimes \U^-.
\end{equation}
Set $\U_{P,\A'}=\U_P\cap \U_{\A'}$. Thanks to Proposition \ref{prop:dec}, one has the following isomorphism of $\A'$-modules
\begin{equation}\label{eq:tripA}
\U_{P,\A'}\cong\U_{\A'}^+(w_\bullet)\otimes\U^{\imath0}_{\A'}\otimes\U^-_{\A'}.
\end{equation}

Under the triangular decomposition \eqref{eq:tri} and \eqref{eq:trip}, we define a surjecitve linear map
\begin{equation}\label{eq:pi}
    \pi_P=\pi^+_{w_\bullet}\otimes\pi^0\otimes(id):\U\longrightarrow \U_P.
\end{equation}
It follows from the above discussion that $\pi_P(\U_{\A'})\subset\U_{P,\A'}$, and that $\pi_P(x)=x$ for $x\in\U_P$. Let us define
\begin{equation}
    \pi^\imath=\pi_P\mid_{\U^\imath}:\Ui\longrightarrow \U_P,
\end{equation}
which restricts to the map
\begin{equation}\label{eq:pia}
    \pi^\imath_{\A'}:\Ui_{\A'}\longrightarrow\U_{P,\A'}.
\end{equation}

The map $\pi^\imath$ first appeared in \cite[Section 4.3]{Let19}. Following \cite[\S 2.5]{KY21}, we call $\pi^\imath$ the {\em Letzter map} and it is an isomorphism of $\Q(q^{1/2})$-vector spaces; see \cite[Lemma 2.10]{KY21}. We will show in Proposition \ref{prop:intp} that $\pi_{\A'}$ is an isomorphism as $\A'$-modules.

\subsection{Relative braid group action}\label{sec:rbga}

Recall that $\wItau\subset\wI$ is a fixed set of representatives for $\tau$-orbits. The real rank of a Satake diagram $(\I=\bI \cup \wI,\tau)$ is defined to be the cardinality of $\wItau$.

We recall the definition of relative Weyl group from \cite{Lus03,DK19}; notations here follows the convention in \cite{WZ23}. For $i\in \wI$, we set $\I_{\bullet,i}:=\bI\cup \{i,\tau i\}$ and there is a real rank 1 Satake subdiagram $(\I_{\bullet,i}=\{i,\tau i\}\cup\bI,\tau\big|_{\I_{\bullet,i}})$.
Let $\cR_{\bullet,i}$ be the set of roots which are linear combinations of $\alpha_j,j\in \I_{\bullet,i}$; $\cR_{\bullet,i}$ is naturally a root system with the simple system $\{\alpha_j\mid j\in \I_{\bullet,i}\}$. Denote by $\cR_{\bullet,i}^+$ the corresponding positive system of $\cR_{\bullet,i}$. Let $W_{\bullet,i}$ be the parabolic subgroup of $W$ generated by $s_i,i\in \I_{\bullet,i}$ and $w_{\bullet,i}$ the longest element of $W_{\bullet,i}$. Define $\tau_i$ such that $w_{\bullet,i}(\alpha_j)=-\alpha_{\tau_{i} j}$ for $j\in \I_{\bullet,i}$.

Define $\bs_i\in W_{\bullet,i}$ such that
\begin{align}\label{def:bsi}
\bwi= \bs_i w_\bullet \, (=w_\bullet \bs_i).
\end{align}
It is clear from the definition that $\bs_i=\bs_{\tau i}$. The relative Weyl group $W^\circ$ associated to $(\I=\bI \cup \wI,\tau)$ can be identified with the subgroup of $W$ generated by $\bs_i,i\in \wItau$. The group $W^\circ$ is itself a Coxeter group with Coxeter generators $\bs_i$. Let $\bbw_0$ be the longest element in $W^\circ$. It is known that $w_0=\bbw_0 w_\bullet$.

In \cite{WZ23}, the relative braid group symmetries are constructed on $\bUi$. Using the isomorphism $\Omega$ in Remark~\ref{rmk:bUi}, we obtain relative braid group symmetries $\TT_i$ on $\Ui$.

Recall from the definition of Satake diagram that $w_\bullet(\alpha_j)=-\alpha_{\tau j}$ for $j\in \bI$. It follows from the definition \eqref{def:bsi} that for $j\in \bI$, $\bs_i(\alpha_j)=\alpha_{\tau \tau_{i} j}$ is a simple root for $\cR_\bullet$.

\begin{proposition} {\em\cite[Lemma 9.2, Propositions 6.2-6.3, Theorems 6.1 and 9.9.]{WZ23}}
\label{prop:WZ} 
For $i\in\wItau$, there exist automorphisms $\TT_{i}$ on $\Ui$ which satisfy braid relations in $W^\circ$. Moreover, for $i\in \wItau,j\in \bI$, we have
\begin{itemize}
\item[(1)] $\TT_i T_j =T_{\tau \tau_{i} j} \TT_i$;
\item[(2)] $\TT_i(F_j)=F_{\tau \tau_{i} j}, \TT_i(E_j)=E_{\tau \tau_{i} j},\TT_{i}(K_j)=K_{\tau \tau_{i} j}$;
\item[(3)] $\TT_{i}(k_p)=K_{\bs_i(\alpha_p-\alpha_{\tau p})}$ for $p\in \wI$;
\item[(4)] $\TT_i(B_i)=c_i^{-1/2} c_{\tau i}^{1/2}q_i^{\langle h_i,\bw\alpha_{\tau i}\rangle-1} T_{\bw}^{-2}(B_{\tau \tau_i (i)})K_{\tau\tau_i(\alpha_i)-\bw\tau_i (\alpha_i)}$;
\item[(5)] $\TT_i(B_{\tau i})=c_i^{1/2} c_{\tau i}^{-1/2} q_i^{\langle h_{\tau i},\bw\alpha_{i}\rangle-1} T_{\bw}^{-2}(B_{\tau_i (i)})K_{ \tau_i(\alpha_i)-\bw\tau\tau_i (\alpha_i)}$.
\end{itemize}
\end{proposition}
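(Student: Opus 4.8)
The plan is to obtain this by transport of structure along the isomorphism $\Omega:\Ui\xrightarrow{\sim}\bUi_\bvs$ of Remark~\ref{rmk:bUi}, starting from the relative braid group symmetries already constructed on $\bUi_\bvs$ in \cite{WZ23}. Concretely, \cite[Theorem~6.1]{WZ23} produces, for each $i\in\wItau$, an automorphism of $\bUi_\bvs$ implementing the simple reflection $\bs_i\in W^\circ$, and \cite[Theorem~9.9]{WZ23} shows these satisfy the braid relations of the Coxeter group $W^\circ$ (for the distinguished parameters; for other admissible choices of the signs $c_i$ one first composes with the elementary rescaling isomorphism $\bB_i\mapsto t_i\bB_i$ between $\imath$quantum groups whose parameters differ by $\vs_i\mapsto t_i^2\vs_i$). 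I would then \emph{define} $\TT_i$ on $\Ui$ as $\Omega^{-1}\circ(-)\circ\Omega$ applied to this operator; since $\Omega$ is an algebra isomorphism, the braid relations transfer verbatim, giving the first assertion.

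For the explicit formulas (1)--(5) the method is uniform: rewrite the left-hand generator of $\Ui$ via $\Omega$ as a scalar multiple of the corresponding generator of $\bUi_\bvs$ (e.g.\ $\Omega(B_i)=q_i^{1/2}(q_i-q_i^{-1})\bB_i$, $\Omega(k_p)=\mathbf{k}_p$, $\Omega(E_j)=q_j^{-1/2}(q_j^{-1}-q_j)\bE_j$), apply the known formula for the $\bU$-side operator from \cite[Propositions~6.2--6.3, Lemma~9.2]{WZ23}, and pull the result back by $\Omega^{-1}$, re-expressing every scalar via the substitution $\vs_i=c_i q_i^{-\langle h_i,w_\bullet\alpha_{\tau i}\rangle/2-1}$. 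Parts (2) and (3) are essentially free: $\TT_i$ merely permutes the black generators $E_j,F_j,K_j$ ($j\in\bI$) and sends $k_p$ to a monomial in the $K$'s, and on all of these $\Omega$ and its $\bU$-side analogue act by the \emph{same} rescaling, so no scalar is introduced. Part (1) follows from the analogous identity on $\bU$ together with the fact that $\Omega$ intertwines Lusztig's $T_j$ on $\U$ with its counterpart on $\bU$ for $j\in\bI$ (a degree-homogeneity check on generators), noting that $T_j$ preserves $\Ui$ by Proposition~\ref{prop:Tj}.

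The substantive work, and where I expect the main obstacle, is matching the scalars in (4) and (5) so that they come out exactly as displayed. Three points need care: (i) verifying that $\Omega$ intertwines $T_\bw^{\pm2}$ on $\U$ with the corresponding power on $\bU$ up to a computable power of $q_i$, which reduces to the single-generator behaviour of each $T_k$, $k\in\bI$, since $\bw$ is a product of such reflections; (ii) checking that the factor $(q_i-q_i^{-1})$ contributed by $\Omega(B_i)$ cancels against the one produced by $\Omega^{-1}$ applied to $B_{\tau\tau_i(i)}$, leaving only the displayed $q_i$-power and $K$-factor; and (iii) fixing compatible square roots so that $c_i^{-1/2}c_{\tau i}^{1/2}$ is unambiguous --- which is precisely why $\sqrt{(-1)^n}$ is adjoined to $\A'$ in the AIV case. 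Passing between (4) and (5) additionally uses the $\tau$-symmetry $\langle h_i,\bw\alpha_{\tau i}\rangle=\langle h_{\tau i},\bw\alpha_i\rangle$. Once these bookkeeping issues are settled, (4) and (5) are a direct substitution, and nothing beyond \cite{WZ23} and the definition of $\Omega$ is needed.
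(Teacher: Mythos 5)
Your proposal matches what the paper does: Proposition~\ref{prop:WZ} is stated as a citation of \cite{WZ23}, and the paper's only "proof" is the remark in Section~\ref{sec:rbga} that the symmetries $\TT_i$ on $\Ui$ are obtained from those on $\bUi_\bvs$ by transport along the isomorphism $\Omega$ of Remark~\ref{rmk:bUi}, with the scalars in (4)--(5) recomputed via the substitution $\vs_i=c_iq_i^{-\langle h_i,w_\bullet\alpha_{\tau i}\rangle/2-1}$. Your elaboration of the scalar bookkeeping (including the role of $\sqrt{(-1)^n}$ for type AIV) is exactly the verification implicit in the paper's citation.
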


Let $\sigma$ be the anti-involution on $\U$ such that $\sigma:E_i\mapsto E_i,F_i\mapsto F_i,K_i\mapsto K_i^{-1}$. By \cite[Proposition 3.14]{WZ23}, there exists an anti-involution $\sigma_\tau$ on $\Ui$ such that 
\begin{align}
\label{sigmatau}
\sigma_\tau(B_i)=B_{\tau i} \text{ and } \sigma_\tau(x)=\sigma\tau(x),
\forall x\in\U_{\bullet}\U^{\imath0}.
\end{align}

\begin{lemma}\label{lem:sigmaT}
We have $\TT_i^{-1}=\sigma_\tau \circ \TT_i \circ \sigma_{\tau}$ for any $i\in \wItau$.
\end{lemma}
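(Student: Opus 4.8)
The plan is to verify the identity $\TT_i^{-1} = \sigma_\tau \circ \TT_i \circ \sigma_\tau$ by checking it on a set of algebra generators of $\Ui$, since both sides are algebra automorphisms (note $\sigma_\tau \circ \TT_i \circ \sigma_\tau$ is an automorphism, being the composite of two anti-automorphisms with one automorphism, and its inverse is $\sigma_\tau \circ \TT_i^{-1} \circ \sigma_\tau$). Because $W^\circ$ is a Coxeter group and it suffices to treat each $\bs_i$ separately, I would reduce, if convenient, to checking the statement after applying $\sigma_\tau$ on one side, i.e. showing $\sigma_\tau \circ \TT_i = \TT_i^{-1} \circ \sigma_\tau$ as anti-automorphisms, again by evaluating on generators $B_p$ ($p \in \wI$), $k_p^{\pm 1}$ ($p \in \wI$), and $E_j, F_j, K_j^{\pm 1}$ ($j \in \bI$).

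First I would dispatch the $\bI$-part: for $j \in \bI$, by \eqref{sigmatau} we have $\sigma_\tau = \sigma\tau$ on $\U_\bullet \U^{\imath0}$, and Proposition~\ref{prop:WZ}(2)–(3) gives explicit, "monomial-type" formulas for $\TT_i(F_j), \TT_i(E_j), \TT_i(K_j), \TT_i(k_p)$. Since $\sigma$ and $\tau$ both have transparent effect on these, and since $\TT_i$ permutes the $\bI$-generators according to $j \mapsto \tau\tau_i j$ (an involution-like reindexing once squared appropriately), the required commutation on the $\bI$-generators and on $\U^{\imath0}$ should follow from direct bookkeeping together with the known relation $\TT_i^{-1}$-formulas, or alternatively from the compatibility $\TT_i T_j = T_{\tau\tau_i j}\TT_i$ in Proposition~\ref{prop:WZ}(1) combined with the classical identity $T_j^{-1} = \sigma \circ T_j \circ \sigma$ for Lusztig's symmetries (valid in the appropriate normalization; one checks the normalization matches our conventions in Section~\ref{sec:qgp}).

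The substantive case is the generator $B_i$ (and $B_{\tau i}$). Here I would use Proposition~\ref{prop:WZ}(4)–(5): $\TT_i(B_i)$ is expressed as a scalar times $T_\bw^{-2}(B_{\tau\tau_i(i)})$ times a torus element $K_{\tau\tau_i(\alpha_i) - \bw\tau_i(\alpha_i)}$, and symmetrically for $B_{\tau i}$. Applying $\sigma_\tau$ to this — using $\sigma_\tau(B_{\tau i}) = B_i$, $\sigma_\tau(K_\mu) = K_{-\mu}$ on the torus, and the way $\sigma_\tau$ interacts with $T_\bw$ (which one must pin down: $T_\bw$ restricted to $\U_\bullet$ and its conjugate by $\sigma_\tau = \sigma\tau$ there) — should turn the right-hand side into exactly the formula predicted for $\TT_i^{-1}(B_i)$. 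Thus the computation reduces to: (a) knowing $\TT_i^{-1}(B_i)$ and $\TT_i^{-1}(B_{\tau i})$ explicitly, which one obtains by inverting the relations (4)–(5) — equivalently by applying $\TT_i$ once more and using the braid/rank-one relations, or by citing the rank-one computations in \cite{WZ23}; and (b) a clean lemma on $\sigma_\tau \circ T_\bw$ versus $T_\bw^{-1} \circ \sigma_\tau$ on the relevant subalgebra, which follows from the classical $\sigma$-conjugation identity for $T_\bw$ and the definition \eqref{sigmatau} of $\sigma_\tau$.

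I expect the main obstacle to be precisely step (b) together with tracking all the scalars and torus weights: the factors $c_i^{\pm 1/2}c_{\tau i}^{\mp 1/2}$, the powers $q_i^{\langle h_i, \bw\alpha_{\tau i}\rangle - 1}$, and the weight $\tau\tau_i(\alpha_i) - \bw\tau_i(\alpha_i)$ must be shown to transform correctly under $\sigma_\tau$ — in particular one needs $\sigma_\tau$ to send the $B_{\tau\tau_i(i)}$ appearing in $\TT_i(B_i)$ to the correct $B$-generator with the correct leftover torus element so that it matches $\TT_i^{-1}$. A useful sanity check is the quasi-split case $\bI = \emptyset$ (so $\bw = 1$, $T_\bw = \Id$), where the formulas collapse dramatically and the identity becomes a short scalar-and-torus verification; I would write that case first as a warm-up and then indicate that the general case is the same computation with $T_\bw^{-2}$ carried along, invoking the $\sigma_\tau$-vs-$T_\bw$ lemma. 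Alternatively, a more conceptual route: since both $\TT_i^{-1}$ and $\sigma_\tau \TT_i \sigma_\tau$ are automorphisms agreeing on $\U_P \cap \Ui$ by the $\bI$-computation and on the one-dimensional "$B_i$-string" by the rank-one check, and since these generate $\Ui$, uniqueness of the rank-one relative braid symmetry (as established in \cite{WZ23}) forces equality; this may shorten the write-up if the rank-one uniqueness statement is available in the form needed.
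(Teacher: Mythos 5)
Your overall strategy---both sides are algebra automorphisms, so check them on a generating set of $\Ui$---is sound in principle, but it is not the route the paper takes, and as written it has a genuine gap. The paper does not verify the identity generator by generator on $\Ui$; it lifts the problem to the universal $\imath$quantum group $\widetilde{\U}^\imath$, where the analogous identity $\widetilde{\TT}_i^{-1}=\sigma^\imath\circ\widetilde{\TT}_i\circ\sigma^\imath$ is already available from \cite[Theorem 6.7]{WZ23}, combines it with $\tau\circ\TT_i\circ\tau=\TT_i$, and then descends through the central reduction $\mathrm{Red}$ (with a separate rescaling check in type AIV). That descent is what makes the proof two paragraphs long; your plan would in effect re-derive the [WZ23] theorem from scratch.

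The concrete gap: you list the generators $B_p$ for all $p\in\wItau$'s orbit structure, i.e.\ all $p\in\I_\circ$, but you only carry out (or sketch) the verification for $B_i$ and $B_{\tau i}$, calling these ``the substantive case.'' The generators $B_j$ with $j\in\I_\circ$, $j\neq i,\tau i$, are never addressed, and for these the formulas for $\TT_i(B_j)$ and $\TT_i^{-1}(B_j)$ are the rank-two formulas (the content of Lemma~\ref{lem:braid-rk2}), which are long, type-by-type expressions involving iterated $q$-commutators and $T_w^{-1}$ of generators; checking that $\sigma_\tau\TT_i\sigma_\tau$ and $\TT_i^{-1}$ agree on such $B_j$ is by far the heaviest part of a direct verification and cannot be waved through. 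Your fallback ``conceptual route'' does not repair this: $\U_P\cap\Ui$ together with the $B_i$-string does not generate $\Ui$, since $B_j=F_j-c_jq_j^{-\langle h_j,\bw\alpha_{\tau j}\rangle/2}T_\bw(E_{\tau j})K_j^{-1}$ for $j\in\I_\circ\setminus\{i,\tau i\}$ does not lie in $\U_P$, so agreement on those two pieces forces nothing about the remaining generators. To close the argument along your lines you would either have to do the rank-two case analysis explicitly, or import the universal-level identity from \cite{WZ23} and push it through the central reduction---which is exactly what the paper does.
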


\begin{proof}
Let $\widetilde{\U}^\imath$ be the universal $\imath$quantum group and recall the central reduction $ \mathrm{Red}: \widetilde{\U}^\imath \rightarrow \Ui$ at the distinguished parameters (i.e., $c_i=-1$). The map $ \mathrm{Red}$ was denoted by $\pi_{\bvs_\diamond}$ in \cite{WZ23}.
Let $\sigma^\imath$ be the anti-involution on $\widetilde{\U}^\imath$ defined in \cite[Proposition 3.2]{WZ23} and $\widetilde{\TT}_i$ be the relative braid group symmetries on $\widetilde{\U}^\imath$. By \cite[Remark 4.8]{WZ23}, we have $\tau\circ\TT_i\circ\tau=\TT_i$. By \cite[Theorem 6.7]{WZ23}, we have $\widetilde{\TT}_i^{-1}=\sigma^{\imath} \circ \widetilde{\TT}_i \circ \sigma^{\imath}$. Hence, 
\begin{align}\label{eq:uniQG}
\widetilde{\TT}_i^{-1}=\sigma^{\imath}\tau \circ \widetilde{\TT}_i \circ \sigma^{\imath}\tau
\end{align}
Note that $\sigma_\tau$ is obtained from $\sigma^\imath \circ \tau$ via the central reduction $ \mathrm{Red}$. 

If the underlying rank 1 Satake diagram associated to $i$ is not type AIV, then $c_i=c_{\tau i}=-1$, and by the construction in \cite[Section 9.4]{WZ23}, we have
\[
\TT_i\circ\mathrm{Red}=\mathrm{Red}\circ \widetilde{\TT}_i,
\qquad 
\TT_i^{-1}\circ\mathrm{Red}=\mathrm{Red}\circ \widetilde{\TT}_i^{-1}.
\]
Hence, applying $ \mathrm{Red}$ to \eqref{eq:uniQG}, we obtain the desired $\TT_i^{-1}=\sigma_\tau \circ \TT_i \circ \sigma_{\tau}$. 

If the underlying rank 1 Satake diagram associated to $i$ is type AIV, then $\TT_i\circ \mathrm{Red}$ differs from $\mathrm{Red}\circ \widetilde{\TT}_i$ by a further rescaling; cf. {\em loc. cit.}. In this case, one can use the formulas of $\TT_i$ and $\TT_i^{-1}$ in \cite{WZ23} to directly check that $\TT_i^{-1}=\sigma_\tau \circ \TT_i \circ \sigma_{\tau}$. This completes the proof.
\end{proof}

\begin{remark}
Let $\mathbf{U}^\imath_{\bvs}$ be the $\imath$quantum group with an arbitrary parameter $\bvs=(\vs_i)_{i\in \wI}$ \cite{Let02,Ko14}. One can use similar arguments as above to show that $\TT_i^{-1}=\sigma_\tau \circ \TT_i \circ \sigma_{\tau}$ on $\mathbf{U}^\imath_{\bvs}$.
\end{remark}

\subsection{Semi-classical limits and Poisson brackets}\label{sec:scl}

For a (not necessarily commutative) $\A'$-algebra $R_q$, suppose it satisfies the condition
\begin{equation}\label{eq:com}
    [f,g]=fg-gf\in(q^{1/2}-1) R_q,\qquad \forall f,g\in R_q.
\end{equation}
Let $R=\mathbb{Z}'\otimes _{\A'} R_q$ be the $\Z'$-algebra obtained by the base change. The condition \eqref{eq:com} implies that $R$ is commutative. Denote by $f\mapsto\overline{f}$ the canonical map $R_q\rightarrow R$. Then $R$ carries a Poisson bracket defined by
\begin{equation}\label{eq:poi}
    \{\overline{f},\overline{g}\}=\overline{\frac{fg-gf}{2(q^{1/2}-1)}},\qquad \forall f,g\in R_q.
\end{equation}
The Poisson algebra $R$ is called the \emph{semi-classical limit} of $R_q$.

\begin{proposition}[\text{\cite[\S 12.2]{DCP93}}]\label{prop:Ucom}
For $A,B$ in $\U_\A^+$ (resp., $\U^-_\A$, $\U_\A$), one has
\begin{equation}\label{eq:Ucom}
    [A,B]\in (q-1)\U_\A^+\quad \text{(resp., $\U^-_\A$, $\U_\A$)}.
\end{equation}
\end{proposition}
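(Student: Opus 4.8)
The plan is to reduce the statement to a claim about the PBW basis given in Proposition~\ref{prop:dec} and then verify it by a weight/degree bookkeeping argument. First I would treat the case of $\U_\A^+$ (the case of $\U_\A^-$ being entirely symmetric, via the anti-automorphism or simply by the parallel construction), and then deduce the case of $\U_\A$ from these two together with a computation of the commutators between the $K_\mu$ and the root vectors $E_{\beta_k}$, $F_{\beta_k}$.

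For the $\U_\A^+$ case, I would argue as follows. By Proposition~\ref{prop:dec}, $\U_\A^+$ is the free $\A$-module on the monomials $E^{\ba}$, $\ba\in\N^l$. Since the bracket is $\A$-bilinear, it suffices to show $[E^{\ba},E^{\bc}]\in(q-1)\U_\A^+$ for all $\ba,\bc$, and since $E^{\ba}E^{\bc}$ lies in $\U_\A^+$ it suffices to show that $E^{\bc}E^{\ba}$ equals $E^{\ba}E^{\bc}$ modulo $(q-1)\U_\A^+$. The key structural input is the well-known Levendorskii--Soibelman-type straightening (commutation) relations for the rescaled PBW root vectors: for $j<k$ (in the fixed convex order coming from the reduced word for $w_0$),
\begin{equation*}
E_{\beta_k}E_{\beta_j}=q^{(\beta_j,\beta_k)}E_{\beta_j}E_{\beta_k}+\sum c_{\bd}\,E^{\bd},
\end{equation*}
where the sum is over $\bd$ supported strictly between $j$ and $k$, and — crucially, in the \emph{rescaled} normalization used here — the structure constants $c_{\bd}$ lie in $(q-1)\A$. (This last point is exactly the content underlying De Concini--Procesi's rescaling and is where the DCKP integral form differs from Lusztig's; one should cite \cite{DCP93} and the rescaled-PBW discussion in Section~\ref{sec:qgp}.) Granting this, one reorders $E^{\bc}E^{\ba}$ into PBW-ordered form by repeatedly applying the straightening relations: each swap of adjacent root vectors introduces either a scalar $q^{(\beta_j,\beta_k)}=1+(q-1)(\cdots)$, which changes the monomial only by $(q-1)$ times an element of $\U_\A^+$, or a lower term already in $(q-1)\U_\A^+$; and the latter property is preserved under further reordering since $\U_\A^+$ is a subalgebra. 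An induction on a suitable ordering of pairs $(\ba,\bc)$ (say by the number of inversions, or by the degree of the word being reordered) then gives $E^{\bc}E^{\ba}\equiv E^{\ba}E^{\bc}\pmod{(q-1)\U_\A^+}$, as desired. The same argument with the $F_{\beta_k}$ gives the $\U_\A^-$ statement.

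For the full algebra $\U_\A$, I would use the triangular decomposition of Proposition~\ref{prop:dec}: every element is an $\A$-combination of $F^{\ba}K_\mu E^{\bc}$. By $\A$-bilinearity of the bracket and the fact that the commutator of a product can be expanded via the Leibniz-type identity $[xy,z]=x[y,z]+[x,z]y$, it suffices to check that the pairwise commutators of the generators $E_i$, $F_i$, $K_i^{\pm1}$ lie in $(q-1)\U_\A$, and then propagate. Here the relations \eqref{eq:KK}--\eqref{Q4} do the work: $[K_i,K_j]=0$; $K_jE_i-E_iK_j=(q_j^{a_{ji}}-1)E_iK_j\in(q-1)\U_\A$ since $q_j^{a_{ji}}-1=q^{\varepsilon_j a_{ji}}-1\in(q-1)\A$, and similarly for $F_i$; and $[E_i,F_j]=\delta_{ij}(q_i-q_i^{-1})(K_i^{-1}-K_i)$, where the factor $q_i-q_i^{-1}=q^{\varepsilon_i}-q^{-\varepsilon_i}$ is visibly in $(q-1)\A$ (after clearing the unit $q^{-\varepsilon_i}$). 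Thus all generator-commutators are in $(q-1)\U_\A$. A straightforward induction on word length — using $[xy,z]=x[y,z]+[x,z]y$ and $[x,yz]=[x,y]z+y[x,z]$, together with the fact that $(q-1)\U_\A$ is a two-sided ideal of $\U_\A$ — extends this to arbitrary $A,B\in\U_\A$, completing the proof.

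The main obstacle I anticipate is not the bookkeeping induction but rather pinning down the precise integral form of the Levendorskii--Soibelman straightening relations in the rescaled normalization, i.e.\ verifying that the structure constants $c_{\bd}$ genuinely lie in $(q-1)\A$ rather than merely in $\A$. In the standard (unrescaled) Lusztig normalization this fails; the point of the De Concini--Kac--Procesi rescaling recalled in Remark~\ref{rmk:bU} and Section~\ref{sec:qgp} is exactly to arrange it, and I would lean on \cite{DCP93} (and, for the explicit rescaled relations, on references such as \cite{DK90}) for this input rather than re-deriving it. Once that input is in hand, everything else is formal manipulation within the subalgebra $\U_\A$ and its two-sided ideal $(q-1)\U_\A$.
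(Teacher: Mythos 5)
First, a caveat on the comparison: the paper gives no proof of this proposition at all — it is quoted from \cite[\S 12.2]{DCP93} — so the only meaningful benchmark is De Concini--Procesi's argument. For the $\U_\A^{\pm}$ cases your outline matches theirs, but you should be aware that the ``key input'' you defer to \cite{DCP93} (that the rescaled Levendorskii--Soibelman structure constants $c_{\bd}$ lie in $(q-1)\A$) is not an auxiliary lemma: by freeness of the PBW basis in Proposition~\ref{prop:dec}, the containment $\sum_{\bd} c_{\bd}E^{\bd}\in(q-1)\U_\A^+$ is \emph{equivalent} to the proposition restricted to pairs of root vectors. So your proof of the $\U_\A^{\pm}$ cases amounts to citing the hard part and adding routine Leibniz bookkeeping. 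That is defensible here, since the paper does exactly the same, but it should not be presented as an independent derivation.

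The genuine gap is in your treatment of the full $\U_\A$. You reduce, via $[xy,z]=x[y,z]+[x,z]y$ and the fact that $(q-1)\U_\A$ is a two-sided ideal, to the pairwise commutators of $E_i,F_i,K_i^{\pm1}$. But these elements do \emph{not} generate $\U_\A$ as an $\A$-algebra: already for $\mathfrak{sl}_3$ one has $E_{\alpha_1+\alpha_2}=T_1(E_2)=\frac{1}{q^{-1/2}(q^{-1}-q)}\left(E_1E_2-q^{-1}E_2E_1\right)$, and the prefactor requires inverting $q^2-1$; any $\A$-polynomial in $E_1,E_2$ of degree $\alpha_1+\alpha_2$ has $E_{\alpha_1+\alpha_2}$-coefficient divisible by $q-1$, so $E_{\alpha_1+\alpha_2}\notin\A\langle E_1,E_2\rangle$. (This failure is precisely why the paper introduces the rescaled $q$-commutators and the localization $\A'$ in Proposition~\ref{prop:gc}.) The correct multiplicative generating set of $\U_\A$ consists of \emph{all} root vectors $E_{\beta_k},F_{\beta_k}$ together with $K_i^{\pm1}$, so your Leibniz induction additionally needs $[E_{\beta_j},F_{\beta_k}]\in(q-1)\U_\A$ for all $j,k$. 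The Cartan commutators generalize trivially by the $\Z\I$-grading, and the case $j=k$ follows by applying $T_{w_{\le j-1}}$ (which preserves $\U_\A$) to \eqref{Q4}; but for $j\neq k$ the mixed commutator produces lower-order cross terms whose $(q-1)$-divisibility is a nontrivial statement of exactly the same nature as the LS input, and it cannot be obtained by formal manipulation from the simple-root relation \eqref{Q4}. This case is simply missing from your argument; it is part of what \cite[\S 12]{DCP93} actually establishes.
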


For $\mu\in\mathbb{N}\I$, let $\U_{\A,\mu}^-$ be the $\A$-submodule of homogeneous elements of degree $\mu$. By the PBW basis, $\U_{\A,\mu}^-$ is a free $\A$-module and $\U^-_\A=\oplus_{\mu\in\mathbb{N}\I}\U^-_{\A,\mu}$.
It is also known that $\U_\A$ is invariant under the action of $T_i,i\in \I$; cf. \cite[\S 12.1]{DCP93}.

\subsection{Poisson-Lie structures on $G^*$}\label{sec:plg}

A Poisson-Lie group is a complex group with a structure of a complex Poisson manifold, where the multiplication map is Poisson. We recall several Poisson structures related to complex semisimple groups in this section.

Let $\fg=\mathfrak{h}\oplus\bigoplus_{\alpha\in\mathcal{R}}\fg_\alpha$ be the root space decomposition of $\g$. Let  $\mathfrak{b}^+=\mathfrak{h}\oplus\bigoplus_{\alpha\in\mathcal{R}^+}\fg_\alpha$, and $\mathfrak{b}^-=\mathfrak{h}\oplus\bigoplus_{\alpha\in\mathcal{R}^-}\fg_\alpha$ be the Borel subalgebras. 

Let $G$ be the complex connected semisimple adjoint group associated with the Lie algebra $\g$, and $B^+$ (resp., $B^-$) be the Borel subgroup associated with Lie algebras $\mathfrak{b}^+$ (resp., $\mathfrak{b}^-$). Let $H=B^+\cap B^-$ be the maximal torus and $U^+$ (resp., $U^-$) be the unipotent radical of $B^+$ (resp., $B^-$). Let $D=G\times G$. Set 
\[
G_\Delta=\{(g,g)\mid g\in G\},\qquad G^*=\{(u_+h,h^{-1}u_-)\in B^+\times B^-\mid u_\pm\in U^\pm\}
\]
to be the subgroups of $D$.

Let $\mathfrak{d}=\fg\oplus\fg$ be the Lie algebra of $D$. Let $\fg^*$ and $\fg_\Delta$ be the Lie algebras of $G^*$ and $G_\Delta$, respectively, which are all identified with Lie subalgebras of $\mathfrak{d}$. 

Let $\k$ be the Killing form on $\fg$. Define the nondegenerate invariant symmetric bilinear form on $\mathfrak{d}$ by
\[
\langle (x_1,x_2),(y_1,y_2)\rangle=\k( x_1,y_1)-\k(x_2,y_2),\qquad x_1,x_2,y_1,y_2\in\fg.
\]
Then $\mathfrak{d}=\fg^*+\fg_\Delta$ is a Lagrangian splitting with respect to this bilinear form. Let us define
\[
\Pi_0=\sum_{i=1}^k\xi_i\wedge\xi^i\in\wedge^2\mathfrak{d},
\]
where $\{\xi_1,\dots,\xi_n\}$ and $\{\xi^1,\dots,\xi^n\}$ are pairs of dual bases of $\fg^*$ and $\fg_\Delta$ with respect to the bilinear form $\langle\;,\;\rangle$. We define bivector fields $\Pi$ on $D$ by
\[
\Pi(d)=\frac{1}{2}(\Pi^r_0-\Pi^l_0), \qquad d\in D.
\]
Here $\Pi_0^r$ and $\Pi_0^l$ are right and left invariant bivector field on $D$ with the value $\Pi_0$ at the identity. It is known that $(D,\Pi)$ is a Poisson-Lie group, and $G_\Delta$, $G^*$ are Poisson submanifolds of $(D,\Pi)$. Let $\Pi_{G^*}=\Pi\mid_{G^*}$. Then $(G^*,\Pi_{G^*})$ is a Poisson-Lie group, called the \emph{dual Poisson-Lie group} of $G$. The algebra $\mathbb{C}[G^*]$ of regular functions on $G^*$ naturally carries a Poisson bracket.

Consider the base change $\A'\rightarrow\mathbb{C}$ given by $q^{1/2}\mapsto 1$. By \cite[\S 12]{DCP93}, one has a canonical isomorphism
\begin{equation}\label{eq:icl}
\varphi: \C\otimes_{\A'}\U_{\A'} \overset{\sim}{\longrightarrow} \C[G^*]
\end{equation}
as Poisson algebras. 

We next recall the explicit construction of the isomorphism. For $\alpha\in\mathcal{R}$, we write $U_\alpha$ to be the root subgroup associated with the root $\alpha$. Fix a pinning $\{x_i:\mathbb{C}\rightarrow U_{\alpha_i},y_i:\mathbb{C}\rightarrow U_{-\alpha_i}\mid i\in\I\}$ of $G$. For $i\in \I$, let 
\[
\dot{s}_i=x_i(1)y_i(-1)x_i(1)\in G.
\]
For a reduced expression $w=s_{i_1}s_{i_2}\cdots s_{i_m}\in W$, let
\[
\dot{w}=\dot{s}_{i_1}\cdot \dot{s}_{i_2}\cdots\dot{s}_{i_m}\in G.
\]
It is known that the element $\dot{w}$ is independent of the choice of the reduced expression. For a reduced expression $w_0=s_{i_1}s_{i_2}\cdots s_{i_n}$ of the longest element in $W$, we set $w_{\le k}=s_{i_1}s_{i_2}\cdots s_{i_k}$ to be a reduced sub-expression, for $1\le k\le n$, and $w_{\le 0}=e$. Let $\beta_k=w_{\le k-1}\alpha_{i_k}$, and let $x_{\beta_ k}:\mathbb{C}\rightarrow U_{\beta_k}$, $y_{\beta_k}:\mathbb{C}\rightarrow U_{-\beta_k}$ be the maps given by $x_{\beta_k}(\xi)= \dot{w}_{\le k-1}\cdot x_{i_k}(\xi)\cdot \dot{w}_{\le k-1}^{-1}$, $y_{\beta_k}(\xi)= \dot{w}_{\le k-1}\cdot y_{i_k}(\xi)\cdot \dot{w}_{\le k-1}^{-1}$, for $1\le k\le n$ and $\xi\in\mathbb{C}$.

By \cite[Lemma 8.3.5]{Spr09}, elements $a\in U^+$, $b\in U^-$ can be factored as
\begin{equation}\label{eq:fac}
    a=x_{\beta_n}(a_n)x_{\beta_{n-1}}(a_{n-1})\cdots x_{\beta_{1}}(a_1)\quad \text{and}\quad b=y_{\beta_{1}}(b_1)y_{\beta_{2}}(b_{2})\cdots y_{\beta_{n}}(b_n),
\end{equation}
for unique tuples $(a_1,\cdots,a_n)\in\mathbb{C}^n$ and $(b_1,\cdots,b_n)\in\mathbb{C}^n$. We define $\chi_{\beta_k}^+\in\mathbb{C}[U^+]$ by $\chi_{\beta_k}^+(a)=a_k$, and define $\chi_{\beta_k}^-\in\mathbb{C}[U^-]$ by $\chi^-_{\beta_k}(b)=b_k$. If $\beta_{k}=\alpha_j$ is a simple root, elements $\chi^+_{\beta_k}$ and $\chi_{\beta_k}^-$ are independent of the reduced expression, in which case we denote by $\chi^+_j=\chi^+_{\beta_k}$ and $\chi_j^-=\chi_{\beta_k}^-$. Since $G$ has the trivial center, one has the canonical isomorphism $\mathbb{C}[H]\cong \mathbb{C}[\alpha_i^{\pm1}\mid i\in\I]$.

Recall the root vectors $F_{\beta_k}$, $E_{\beta_k}$ associated with the reduced expression $w_0=s_{i_1}s_{i_2}\cdots s_{i_n}$ in \eqref{eq:F-root}. Then the isomorphism \eqref{eq:icl} is given by 
\begin{align}
\begin{split}\label{eq:isg}
&\overline{E_{\beta_k}}:(u_+h,h^{-1}u_-)\mapsto \chi^+_{\beta_k}(u_+),\quad \overline{F_{\beta_k}}:(u_+h,h^{-1}u_-)\mapsto \chi_{\beta_k}^-(u_-),\\ &\overline{K_i}:(u_+h,h^{-1}u_-)\mapsto \alpha_i(h),
\end{split}
\end{align}
for $u_\pm\in U^\pm$, $h\in H$, $1\le k\le n$, and $i\in\I$.

The braid group actions $T_i$ ($i\in\I$) preserve the integral form $\U_\A$. They induce Poisson automorphism $\sigma_i$ on $G^*$ after specialization. The formula for $\sigma_i$ was obtained in \cite[\S 7.5]{DCKP92} which we now recall. 

For $i\in\I$, let $\varphi_i:\textrm{SL}_2(\mathbb{C})\rightarrow G$ be the group homomorphism such that 
\[
\varphi_i(\begin{pmatrix}
    1 & \xi \\ & 1
\end{pmatrix})=x_i(\xi),\qquad \varphi_i(\begin{pmatrix}
    1 & \\ \xi & 1
\end{pmatrix})=y_i(\xi),\qquad \text{for $\xi\in\mathbb{C}$}.
\]

Define the map $\sigma_i:G^*\rightarrow G^*$ by 
\begin{equation}\label{eq:gbg}
\sigma_i(u_+h,h^{-1}u_-)=(t_1u_+ht_2,t_1h^{-1}u_-t_2),
\end{equation}
where $t_1=\varphi_i(\begin{pmatrix}
    \alpha_i(h)\chi_i^-(u_-) & -1 \\ 1 &  
\end{pmatrix})$
and $t_2=\varphi_i(\begin{pmatrix}
    \alpha_i^{-1}(h)\chi_i^+(u_+) & 1 \\ -1 
\end{pmatrix})$.

The map $\sigma_i$ is invertible and its inverse $\sigma_i^{-1}:G^*\rightarrow G^*$ is given by 
\begin{equation}\label{eq:gbginv}
\sigma_i^{-1}(u_+h,h^{-1}u_-)=(t_1' u_+h t_2',t_1' h^{-1}u_- t_2'),
\end{equation}
where $t_1'=\varphi_i(\begin{pmatrix}
     & 1 \\ -1 &  \chi_i^+(u_+)
\end{pmatrix})$
and $t_2'=\varphi_i(\begin{pmatrix}
    & -1 \\ 1 &\chi_i^-(u_-) 
\end{pmatrix})$. 

\begin{remark}
 The map $\sigma_i^{-1}$ in our convention corresponds to the map $\sigma_i$ in \cite[Example 4.5]{Sh22b}.
\end{remark}
 
Then the comorphism $\sigma_i^*:\mathbb{C}[G^*]\rightarrow \mathbb{C}[G^*]$ coincides with the specialization of the map $T_i:\U_\A\rightarrow \U_\A$ under the isomorphism \eqref{eq:icl}. For any $w\in W$ with a reduced expression $w=s_{i_1}\cdots s_{i_r}$, define $\sigma_w = \sigma_{i_1}\cdots \sigma_{i_r}$. The map $\sigma_w$ is independent of the choice of reduced expressions for $w$ since $\sigma_i$ satisfy braid relations in $W$.

\subsection{Poisson structures on $K^\perp\backslash G^*$}\label{sec:psk}
We next consider Poisson structures associated with involutions. Recall the Satake diagram and parameters $c_i\in\{\pm1\}$ ($i\in\I_\circ$) from Section \ref{sec:iqg}.Then there is a unique algebraic group involution $\theta$ of $G$, such that 
\begin{align}\label{eq:ty}
\begin{split}
\theta(y_i(\xi))=\dot{w}_\bullet\cdot x_{\tau i}(c_i\xi)\cdot \dot{w}_\bullet^{-1},\qquad &\text{for }i\in\I_\circ, \;\xi\in\C,\\
\theta(y_j(\xi))=y_j(\xi),\quad \theta(x_j(\xi))=x_j(\xi),\qquad & \text{for }j\in\I_\bullet,\;\xi\in\mathbb{C}.
\end{split}
\end{align}

Let $K=G^\theta$ be the fixed-point subgroup, and 
$$
G_\theta=\{(g,\theta(g))\mid g\in G\}
$$
be the subgroup of $D$. Let $K^\perp$ be the identity component of $G^*\cap G_\theta$. Let $\X=K^\perp\backslash G^*$, which is equipped with the quotient manifold structure. By \cite[Proposition 3.1]{So24}, $K^\perp$ is a coisotropic subgroup of $G^*$. Hence there is a unique Poisson structure $\Pi_\X$ on $\X$ such that the canonical projection $(G^*,\Pi_{G^*})\rightarrow (\X,\Pi_\X)$ is Poisson. Let $\mathbb{C}[\X]$ be the ring of regular functions on $\X$ which carries a Poisson bracket induced by $\Pi_\X$. We recall the main theorem of \cite{So24} which describes the semiclassical limit of $\imath$quantum groups.

\begin{proposition}{\em \cite[Theorem 1]{So24}}\label{prop:iphi}
    There is a canonical isomorphism as Poisson algebras 
    \begin{equation}\label{eq:iphi}
    \varphi^\imath:{}\C\otimes_{\A'}\U^\imath_{\A'}\overset{\sim}{\longrightarrow}{}\C[\X],
\end{equation} 
such that the diagram
    \begin{equation}
        \begin{tikzcd}
            & \C[\X] \arrow[r,hook] & \C[G^*] \\
            & \mathbb{C}\otimes_{\A'}\U^\imath_{\A'} \arrow[u,"\varphi^\imath"] \arrow["_\C\iota"',r] & {}\C\otimes_{\A'} \U_{\A'} \arrow[u,"\varphi"']
        \end{tikzcd}
    \end{equation}
    commutes.     
    Here $_\mathbb{C}\iota$ is the base change of the natural embedding $\iota:\Ui_{\A'}\rightarrow \U_{\A'}$.
\end{proposition}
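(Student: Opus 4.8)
The plan is to realize $\varphi^\imath$ as the corestriction of the De~Concini--Procesi isomorphism $\varphi$ of \eqref{eq:icl}: put $A:=\varphi\bigl({}_\C\iota(\C\otimes_{\A'}\Ui_{\A'})\bigr)\subseteq\C[G^*]$, so that the whole content becomes the identification $A=\C[\X]=\C[G^*]^{K^\perp}$. First I check that $\Ui_{\A'}$ satisfies the commutator condition \eqref{eq:com}, so $\C\otimes_{\A'}\Ui_{\A'}$ carries a Poisson bracket via \eqref{eq:poi}: for $f,g\in\Ui_{\A'}$ one has $[f,g]\in\Ui$ and $[f,g]\in(q-1)\U_{\A'}$ by Proposition~\ref{prop:Ucom}, whence $[f,g]/(q-1)\in\Ui\cap\U_{\A'}=\Ui_{\A'}$ and $[f,g]\in(q^{1/2}-1)\Ui_{\A'}$. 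Next, the inclusion $\iota\colon\Ui_{\A'}\hookrightarrow\U_{\A'}$ is a split injection of $\A'$-modules, since by Proposition~\ref{prop:intp} its composite with $\pi_P$ equals the isomorphism $\pi^\imath_{\A'}$ of \eqref{eq:pia}; therefore the base change ${}_\C\iota\colon\C\otimes_{\A'}\Ui_{\A'}\to\C\otimes_{\A'}\U_{\A'}=\C[G^*]$ is injective, and it is a morphism of Poisson algebras because both brackets come from the commutator in $\U$ and $\varphi$ is a Poisson isomorphism. Thus $A$ is a Poisson subalgebra of $\C[G^*]$ isomorphic to $\C\otimes_{\A'}\Ui_{\A'}$, and what remains is to prove $A=\C[\X]$.

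For the inclusion $A\subseteq\C[\X]$ I would use that $\Ui$ is a one-sided coideal subalgebra of $\U$. Via the Levendorskii--Soibelman formulas for the coproduct of the rescaled root vectors together with the $\A'$-direct summand property just used, the coideal relation descends to the $\A'$-level; passing to $q=1$, its semiclassical counterpart is that $A$ is invariant under the translation action of $G^*$ on $\C[G^*]$ on the relevant side, so $A$ is (after possibly saturating) the coordinate ring $\C[H\backslash G^*]$ of a homogeneous space for a connected subgroup $H\subseteq G^*$. To identify $H$ with $K^\perp$ I match infinitesimal data at the identity: $\mathrm{Lie}(H)$ is cut out by the differentials at $e$ of the functions $\bar x$, $x\in\Ui_{\A'}$, and one checks — by inspecting the $q=1$ leading terms of the generators $B_i,k_i-1$ ($i\in\wI$) and $E_j,F_j,K_j-1$ ($j\in\I_\bullet$), whose coefficients in \eqref{def:iQG} are exactly calibrated, through the pinning-level formula \eqref{eq:ty} for $\theta$, to the subspace $\fg^*\cap\fg_\theta$ — that $\mathrm{Lie}(H)=\fg^*\cap\fg_\theta=\mathrm{Lie}(K^\perp)$. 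Hence $A\subseteq\C[\X]$, and taking $\varphi^\imath:=\varphi\circ{}_\C\iota$ with target corestricted to $\C[\X]$ makes the displayed square commute.

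The reverse inclusion $A\supseteq\C[\X]$ is the step I expect to be the main obstacle, as one must rule out $K^\perp$-invariant regular functions that are not hit. I would first match Krull dimensions: since $\pi^\imath_{\A'}$ is an $\A'$-module isomorphism $\Ui_{\A'}\overset{\sim}{\longrightarrow}\U_{P,\A'}$, the algebra $A$ has the same Krull dimension as $\C\otimes_{\A'}\U_{P,\A'}$, which by the factorization \eqref{eq:tripA} is the sum of the dimensions of $\C\otimes_{\A'}\U_{\A'}^+(\bw)$ (a polynomial ring), $\C\otimes_{\A'}\U_{\A'}^{\imath 0}$ (a Laurent polynomial ring), and $\C\otimes_{\A'}\U^-_{\A'}\cong\C[U^-]$; one then checks this total equals $\dim\X=\dim G^*-\dim K^\perp$. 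With dimensions equal, $A\subseteq\C[\X]$ becomes an inclusion of finitely generated domains of the same dimension; since $\X$ is a smooth homogeneous affine variety, $\C[\X]$ is normal and realizes $G^*\to\X$ as the geometric quotient by the connected, solvable, coisotropic group $K^\perp$ (coisotropy being \cite[Proposition~3.1]{So24}). It then suffices to verify either that $A$ separates generic $K^\perp$-orbits, so $\mathrm{Frac}\,A=\mathrm{Frac}\,\C[\X]$, or that $A$ is integrally closed in $\C[G^*]$ — the latter following from the $\A'$-direct summand property and normality of $\C[G^*]\cong\C\otimes_{\A'}\U_{\A'}$. This last verification is the real crux; granting it, $A=\C[\X]$, so $\varphi^\imath$ is a Poisson isomorphism, and it is the unique one making the displayed diagram commute, being forced by $\varphi$ and $\iota$.
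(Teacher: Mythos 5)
First, a point of order: the paper does not prove this proposition at all --- it is quoted verbatim as \cite[Theorem 1]{So24}, so there is no in-paper proof to compare against. What you have written is an attempt to reconstruct the proof from the earlier paper. The cited proof does not argue via invariant theory on $\C[G^*]$; it goes through the explicit cross-section of $K^\perp\backslash G^*$ (the affine subvariety $P\subset G^*$ of \cite[Proposition 3.2, \S 3.3]{So24}, onto which the projection restricts to an isomorphism), identifies $\C\otimes_{\A'}\U_{P,\A'}$ with $\C[P]$ using the rescaled PBW basis, and transports everything through the Letzter map $\pi^\imath$. That route replaces both of your inclusions by concrete coordinate computations and is why the present paper can later invoke $\pi^\imath_{\A'}$ so freely.

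As for your argument on its own terms: the preliminary steps are fine (the verification that $[f,g]\in(q^{1/2}-1)\Ui_{\A'}$, and the injectivity of ${}_\C\iota$ via the retraction $(\pi^\imath_{\A'})^{-1}\circ\pi_P$), but both inclusions $A\subseteq\C[\X]$ and $A\supseteq\C[\X]$ are left with genuine gaps. For the first, ``coideal $\Rightarrow$ translation-invariant $\Rightarrow$ coordinate ring of a homogeneous space after possibly saturating'' is not a proof: the saturation is exactly the point at issue, and identifying $\mathrm{Lie}(H)$ from differentials at $e$ presupposes that $A$ is the full invariant ring of some connected $H$, which is circular. For the second, the proposed shortcut is based on a false implication: being an $\A'$-module direct summand does not make a subalgebra integrally closed in the ambient ring (e.g.\ $\C[t^2,t^3]\subset\C[t]$ is a module direct summand but not integrally closed), so normality of $\C[G^*]$ buys you nothing here; and the orbit-separation alternative is not carried out. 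You correctly flag this last step as ``the real crux,'' which is an honest admission that the proof is incomplete --- but it means the reverse inclusion, i.e.\ surjectivity of $\varphi^\imath$ onto $\C[\X]$, is not established. The cited proof avoids this entirely by exhibiting $\C[\X]\cong\C[P]$ with explicit polynomial generators matched against the PBW generators of $\U_{P,\A'}$.
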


\begin{example}\label{ex:diagonal}
    Let $\widetilde{G}=G\times G$, and $\omega:G\rightarrow G$ be the group involution such that $\omega(x_i(a))=y_i(-a)$, for $i\in\I$, $a\in\mathbb{C}$, and $\omega(h)=h^{-1}$, for $h\in H$. Let $\widetilde{\theta}:\widetilde{G}\rightarrow\widetilde{G}$ be the map given by $\widetilde{\theta}(g_1,g_2)=(\omega(g_2),\omega(g_1))$. We consider the Poisson homogeneous space associated to the involution $\widetilde{\theta}$. Let $\widetilde{B}^+=B^+\times B^+$, and $\widetilde{B}^-=B^-\times B^-$. Then 
    \[
    \widetilde{G}^*=\{(u_+t,v_+h,t^{-1}u_-,h^{-1}v_-)\mid u_+,v_+\in U^+; u_-,v_-\in U^-; t,h\in H\}\subset \widetilde{B}^+\times \widetilde{B}^-,
    \]
    and \[
    \widetilde{K}^\perp=\{(u_+t,v_+t,\omega(v_+)t^{-1},\omega(u_+)t^{-1})\mid u_+,v_+\in U^+;t\in H\}.
    \]
    One has an isomorphism 
    \[
    \widetilde{K}^\perp\backslash\widetilde{G}^*\overset{\sim}{\longrightarrow} G^*
    \]
    as $\widetilde{G}^*$-Poisson homogeneous spaces, given by
    \[
    \widetilde{K}^\perp \cdot (u_+t,v_+h,t^{-1}u_-,h^{-1}v_-)\mapsto (\omega(v_-^{-1})h^{-1}u_+t,h\omega(v_+^{-1})t^{-1}u_-),
    \]
    where the right $\widetilde{G}^*$-action on $G^*$ is given by 
    \[
    (w_+s,s^{-1}w_-)\cdot (u_+t,v_+h,t^{-1}u_-,h^{-1}v_-)=(\omega(v_-^{-1})h^{-1}w_+su_+t, h\omega(v_+^{-1})s^{-1}w_-t^{-1}u_-^{-1}).
    \]
    Note that the action map is Poisson since $G^*$ is a Poisson-Lie group, and the morphism $G^*\rightarrow G^*$, given by $(v_+h,h^{-1}v_-)\mapsto (\omega(v_-^{-1})h^{-1},h\omega(v_+^{-1}))$, is Poisson.
\end{example}


\section{Integrality of relative braid group symmetries and PBW bases}\label{sec:intb}
In this section, we show that the relative braid group action preserves the DCKP-type integral form $\Ui_{\A'}$, and this leads to a (rescaled) PBW basis for $\Ui_{\A'}$. The key observation is that the DCKP-type integral form $\Ui_{\A'}$ is generated by Chevalley-type generators via rescaled $q$-commutators. 

\subsection{Rescaled $q$-commutators}\label{sec:qcom}

For $A, B$ in a $\Q(q^{1/2})$-algebra and $a\in\mathbb{Z}$, we write $[A,B]_{q^a} =AB -q^aBA$ and $\{A,B\}_{q^a} =\frac{[A,B]_{q^a}}{q-1}$ to be the \emph{$q$-commutators} and \emph{rescaled $q$-commutators}.

The following lemma follows from \eqref{eq:Ucom} and immediate computation.

\begin{lemma}\label{lem:qcom}
For any $A,B\in\U^+_{\A'}$ (resp., $\U^-_{\A'}$) and $a\in\Z$, we have $\{A,B\}_{q^a}\in \U^+_{\A'}$ (resp., $\U^-_{\A'}$).
\end{lemma}

The following proposition gives an intrinsic characterization of the integral forms $\U^\pm_{\A'}$ without using Lusztig's braid group symmetries.

\begin{proposition}\label{prop:gc}
    The $\A'$-subalgebra $\U^-_{\A'}$ (resp., $\U^+_{\A'}$) is the smallest $\A'$-subalgebra of $\U^+$ (resp., $\U^-$) which contains elements $F_i$ (resp., $E_i$) for $i\in\I$ and is closed under taking the rescaled $q$-commutator $\{\cdot,\cdot\}_{q^a}$ for any $a\in\N$.
\end{proposition}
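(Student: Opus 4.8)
The plan is to prove the two inclusions separately; by symmetry (applying the automorphism swapping $E_i \leftrightarrow F_i$, or equivalently working with $\U^-$ throughout) it suffices to treat, say, $\U^-_{\A'}$. Denote by $\mathcal{G}$ the smallest $\A'$-subalgebra of $\U^-$ containing all $F_i$, $i\in\I$, and closed under $\{\cdot,\cdot\}_{q^a}$ for $a\in\N$. One inclusion, $\mathcal{G}\subseteq \U^-_{\A'}$, is the easy direction: the generators $F_i$ lie in $\U^-_{\A'}$ since each $F_i$ is a PBW root vector, and $\U^-_{\A'}$ is closed under rescaled $q$-commutators by Lemma~\ref{lem:qcom} (which rests on Proposition~\ref{prop:Ucom}); hence $\U^-_{\A'}$ is one of the subalgebras being intersected over, so it contains the smallest one.

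The substantive direction is $\U^-_{\A'}\subseteq \mathcal{G}$. I would prove that every rescaled PBW monomial $F^{\ba}=F_{\beta_1}^{a_1}\cdots F_{\beta_l}^{a_l}$ lies in $\mathcal{G}$; since these span $\U^-_{\A'}$ over $\A'$ and $\mathcal{G}$ is an $\A'$-module, this finishes. Because $\mathcal{G}$ is an algebra, it is enough to show each individual root vector $F_{\beta_k}=T_{i_1}\cdots T_{i_{k-1}}(F_{i_k})$ lies in $\mathcal{G}$. The natural strategy is induction on the height (or length $k$) of the root $\beta_k$: write $\beta_k = s_{i_1}(\beta')$ where $\beta' = s_{i_2}\cdots s_{i_{k-1}}(\alpha_{i_k})$ is a root of smaller height, so $F_{\beta_k} = T_{i_1}(F_{\beta'})$ with $F_{\beta'}\in\mathcal{G}$ by induction. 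I would then use the standard identity expressing $T_i(F_\beta)$, for $\beta$ a positive root with $s_i(\beta)\in\cR^+$, as an iterated $q_i$-bracket of $F_\beta$ with $F_i$ — concretely, $\mathrm{ad}$-type formulas of the shape $T_i(x) = \sum_r (\pm) q_i^{?}\, F_i^{(r)} x F_i^{(\langle\beta,\alpha_i^\vee\rangle - r)}$ up to a scalar — and rewrite each such term using nested rescaled $q$-commutators $\{\cdot,\cdot\}_{q^a}$. The key point is that the scalars appearing are, after rescaling by suitable powers of $(q-1)$ absorbed into the $\{\cdot,\cdot\}$ operations, elements of $\A'$: this is exactly where inverting $(1+q)$ and the $[\varepsilon_i]!$ (i.e. the quantum Cartan integers $[-a_{ij}]_i!$) in the definition of $\A'$ is used, so the bracketed expression stays in $\mathcal{G}$ rather than only in $\U^-\otimes\Q(q^{1/2})$.

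The main obstacle I anticipate is the bookkeeping in the induction step when $\langle\beta,\alpha_i^\vee\rangle$ has absolute value $>1$, i.e. when $T_i(F_\beta)$ is not simply a single $q$-commutator $\{F_i,F_\beta\}_{q^a}$ but a longer sum: one must verify that the whole sum can be reorganized as an $\A'$-combination of iterated rescaled $q$-commutators of things already known to be in $\mathcal{G}$, with all denominators landing in $\A'$. A clean way to handle this is to first reduce to the rank-$2$ situation (the pair $\{i, j\}$ with $\alpha_j$ the support of $\beta$ in the relevant direction), where $T_i(F_j)$ has the explicit closed form recalled in Section~\ref{sec:qgp}, and check the claim there by direct computation — the divided-power coefficients $q_i^{r}\qbinom{-a_{ij}}{r}_i$ combine with the prefactor $\frac{1}{[-a_{ij}]_i!(q_i-q_i^{-1})^{-a_{ij}}}$ into a polynomial expression in $q^{\pm 1/2}$ divisible by the appropriate power of $(q-1)$ once the $q$-commutator normalization is taken into account, and the residual denominators are exactly $(1+q)$-type factors and $[\varepsilon_i]$-type factors. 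Then a straightforward induction propagates this from the rank-$2$ root vectors to all $F_{\beta_k}$. I would relegate the rank-$2$ verification to a lemma (or to an appendix, in the spirit of Appendix~\ref{app:A}), since it is the only genuinely computational piece.
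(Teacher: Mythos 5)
Your overall architecture is exactly the paper's: the inclusion $\mathcal{G}\subseteq\U^-_{\A'}$ follows from Lemma~\ref{lem:qcom}, and the converse reduces, via the PBW basis, to showing that each root vector $F_{\beta_k}$ is an $\A'$-multiple of an iterated rescaled $q$-commutator in the $F_i$'s, with the denominator analysis $(q_j-q_j^{-1})=q^{-1}[\varepsilon_j](q-1)(q+1)$ explaining why $\A'$ must invert $(1+q)$ and the $[\varepsilon_j]$'s. The difference is how that key expression for $F_{\beta_k}$ is obtained: the paper simply quotes \cite[Theorem 3.5]{Su23}, which already writes $\bF_{\beta_k}$ as a scalar times a nested $q$-commutator of the $\bF_i$'s, and then only has to check that the scalar lies in $\A'$.

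Your proposed substitute for that citation has a genuine gap. The identity $T_i(x)=\sum_r(\pm)q_i^{?}F_i^{(r)}xF_i^{(s)}$ (up to scalar) is \emph{not} a general formula for $T_i$ on $\U^-$; it is valid for $x=F_j$ simple, and more generally only for homogeneous $x$ killed by the appropriate skew derivation $r_i$ (resp.\ ${}_ir$) — the $s\neq 0$ terms of Lusztig's general formula involve $E_i$'s and do not vanish otherwise. So in your induction step $F_{\beta_k}=T_{i_1}(F_{\beta'})$ you must first verify that $F_{\beta'}=T_{i_2}\cdots T_{i_{k-1}}(F_{i_k})$ lies in $\U^-\cap T_{i_1}^{-1}(\U^-)=\ker$ of that derivation, a hypothesis your sketch never states. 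Moreover, your proposed ``reduction to the rank-$2$ subsystem $\{i,j\}$'' does not make sense here: $\beta'$ is in general supported on many nodes and $F_{\beta'}$ is not a simple root vector, so the closed formula for $T_i(F_j)$ recalled in Section~\ref{sec:qgp} is not the statement you need; what you need is the ad-type formula on the full weight space, whose exponent is $n=-\langle\beta',h_i\rangle$ (bounded by the $\alpha_i$-string length, not by $-a_{ij}$ for a single $j$), together with a check that the resulting $[n]_i!$ is invertible in $\A'$. All of this can be carried out (it is essentially the Levendorskii--Soibelman / convexity argument that underlies Sugawara's theorem), but as written the induction step is unjustified, and this is precisely the point the paper outsources to \cite{Su23}.
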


\begin{proof}
    We prove the proposition for $\U^-_{\A'}$. The proof for $\U^+_{\A'}$ is similar and hence omitted.

    Let $\mathcal{U}$ be the smallest $\A'$-subalgebra of $\U^-$ which contains elements $F_i$ for $i\in\I$ and is closed under taking the rescaled $q$-commutators. Then $\mathcal{U}\subset\U^-_{\A'}$ by Lemma \ref{lem:qcom}.

    We next show that $\U^-_{\A'}\subset\mathcal{U}$. By the PBW basis of $\U^{-}_{\A'}$, it suffices to show that each root vector $F_{\beta_k}$ belong to $\mathcal{U}$. Recall the rescaling $\bF_i=\frac{1}{q_i^{1/2}(q_{i}-q_{i}^{-1})}F_i$ for $i\in\I$. By \cite[Theorem 3.5]{Su23}, the element $\bF_{\beta_k}=\frac{1}{q_{i_k}^{1/2}(q_{i_k}-q_{i_k})^{-1}}F_{\beta_k}$ can be written as $cM$, where $c\in\A'$ and $M$ is obtained via finite applications of $q$-commutators $[\cdot,\cdot]_{q^a}$ on the elements $\bF_i$ for various $i\in\I$ and $a\in\mathbb{Z}$. We remark that although \cite[Theorem 3.5]{Su23} only states that $c$ belongs to $\Q(q^{1/2})$, it follows from the proof that $c$ moreover belongs to $\A'$, because to obtain the desired expression, one only needs to apply \cite[(3.1) and Lemma 3.2]{Su23} repeatedly and the only denominator shown in the equation is $[-a_{ij}]_i!$ for $i\neq j\in\I$ which is easily checked to be invertible invertible in $\A'$. Let us write $M$ in terms of $q$-commutators. Suppose $M$ is obtained by applying $q$-commutators to elements $\bF_{j_1}$, $\mathbf{F}_{j_2}$, $\cdots$, $\mathbf{F}_{j_m}$ in an appropriate order. By rewriting each $q$-commutator $[\cdot,\cdot]_{q^a}$ to the rescaled $q$-commutator $\{\cdot,\cdot\}_{q^a}$ and writing each $\mathbf{F}_j$ in terms of $\frac{1}{q_j^{1/2}(q_j-q_j^{-1})}F_j$, we can write $M$ as $c'M'$, where $M'$ is obtained via $m-1$ applications of rescaled $q$-commutators $\{\cdot,\cdot\}_{q^a}$ on the elements $F_{j_1}$, $F_{j_2}$, $\cdots$, $F_{j_m}$ in the same order, and $c'=\frac{(q-1)^{m-1}}{\prod_{t=1}^m(q_{j_t}-q_{j_t}^{-1})}$. In conclusion, we have 
    $$F_{\beta_k}=c\frac{(q-1)^{m-1}(q_{i_k}-q_{i_k}^{-1})}{\prod_{t=1}^m q_{j_t}^{1/2}(q_{j_t}-q_{j_t}^{-1})}M',$$
    where $M'\in\mathcal{U}$ and $c\in\A'$. By the direct computation we have
    \[
    q_j-q_j^{-1}=q^{-1}[\varepsilon_j](q-1)(q+1).
    \]
     Hence
    \[
   \frac{(q-1)^{m-1}(q_{i_k}-q_{i_k}^{-1})}{\prod_{t=1}^m(q_{j_t}-q_{j_t}^{-1})}=\frac{q^{m-1}}{(q+1)^{m-1}}\frac{[ \varepsilon_{i_k}]}{\prod_{t=1}^m[\varepsilon_{j_t}]}\in\A'.
    \]
    Hence we conclude that $F_{\beta_k}\in\mathcal{U}$ which completes the proof.
\end{proof}

By Proposition~\ref{prop:Ucom}, $\Ui_{\A'}$ satisfy the condition \eqref{eq:com}. The following lemma is immediate from Lemma~\ref{lem:qcom}.

\begin{lemma}\label{lem:qcomi}
For any $A,B\in\U^\imath_{\A'}$ and $a\in\Z$, we have $\{A,B\}_{q^a}\in \U^\imath_{\A'}$.
\end{lemma}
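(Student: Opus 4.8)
The plan is to deduce Lemma~\ref{lem:qcomi} directly from the fact (Proposition~\ref{prop:Ucom}) that $\Ui_{\A'}$ satisfies condition \eqref{eq:com}, together with closure of $\Ui_{\A'}$ under multiplication. First I would recall that for $A,B\in\Ui_{\A'}$ the commutator $[A,B]=AB-BA$ lies in $(q-1)\Ui_{\A'}$ by Proposition~\ref{prop:Ucom} (applied inside $\U_\A$, noting $\Ui_{\A'}=\Ui\cap\U_{\A'}$ is closed under the bracket since both $\Ui$ and $\U_{\A'}$ are). Write $[A,B]=(q-1)C$ for some $C\in\Ui_{\A'}$.

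Next, for $a\in\Z$ I would rewrite the rescaled $q$-commutator as
\begin{equation*}
\{A,B\}_{q^a}=\frac{AB-q^aBA}{q-1}=\frac{(AB-BA)+(1-q^a)BA}{q-1}=C+\frac{1-q^a}{q-1}\,BA.
\end{equation*}
The scalar $\frac{1-q^a}{q-1}$ is a Laurent polynomial in $q$: for $a\ge 0$ it equals $-(1+q+\cdots+q^{a-1})$, and for $a<0$ it equals $q^{a}(1+q+\cdots+q^{-a-1})$; in either case it lies in $\A\subset\A'$. Since $C\in\Ui_{\A'}$ and $BA\in\Ui_{\A'}$ (as $\Ui_{\A'}$ is an $\A'$-subalgebra, hence closed under products), we conclude $\{A,B\}_{q^a}\in\Ui_{\A'}$.

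There is essentially no obstacle here; the statement is an immediate formal consequence of \eqref{eq:com} (equivalently Proposition~\ref{prop:Ucom}) and the ring structure of $\Ui_{\A'}$, exactly parallel to how Lemma~\ref{lem:qcom} follows from \eqref{eq:Ucom} — indeed one could alternatively phrase the proof as: $\{A,B\}_{q^a}=\{A,B\}_{q}+\frac{q-q^a}{q-1}BA$ and invoke Lemma~\ref{lem:qcom}-type reasoning, but the direct computation above is cleanest. The only minor point worth stating explicitly is that $\frac{1-q^a}{q-1}\in\A$ for all $a\in\Z$, which is the identity $\frac{1-q^a}{1-q}=\sum_{j=0}^{a-1}q^j$ (and its negative-exponent variant), so no localization beyond $\A$ is even needed for this particular lemma.
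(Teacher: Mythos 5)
Your proof is correct and follows essentially the same route as the paper's: both rewrite $\{A,B\}_{q^a}=\frac{AB-BA}{q-1}-\frac{q^a-1}{q-1}BA$ and combine the divisibility of the commutator by $q-1$ (Proposition~\ref{prop:Ucom}, restricted to $\Ui_{\A'}=\Ui\cap\U_{\A'}$) with closure under multiplication. Your extra remarks that $\frac{1-q^a}{q-1}\in\A$ and that $C=[A,B]/(q-1)$ lands in $\Ui_{\A'}$ are correct and only make explicit what the paper leaves implicit.
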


\begin{proof}
By the direct computation, we have
\[
 \{A,B\}_{q^a}=\frac{AB-q^aBA}{q-1}=\frac{AB-BA}{q-1}-\frac{q^a-1}{q-1}BA.
\]
The lemma then follows from the condition \eqref{eq:com}.
\end{proof}

\subsection{Integrality of the projection map}
Recall $\pi^\imath_{\A'}$ from \eqref{eq:pia}.

\begin{proposition}\label{prop:intp}
    The map $\pi^\imath_{\A'}: \Ui_{\A'}\rightarrow \U_{P,\A'}$ is an isomorphism as $\A'$-modules.
\end{proposition}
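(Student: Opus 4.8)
The plan is to leverage three facts: first, that $\pi^\imath$ is already known to be an isomorphism of $\Q(q^{1/2})$-vector spaces (the Letzter map, \cite[Lemma 2.10]{KY21}); second, that $\pi^\imath_{\A'}$ does land in $\U_{P,\A'}$ (established in Section~\ref{sec:proj} via the compatible decompositions \eqref{eq:tripA} and the fact that $\pi_P(\U_{\A'})\subset\U_{P,\A'}$); and third, the intrinsic generation statement for the integral form that we will prove in the sequel. Since $\pi^\imath$ is already injective on $\Ui$, the map $\pi^\imath_{\A'}$ is automatically injective. So the entire content is \emph{surjectivity} of $\pi^\imath_{\A'}:\Ui_{\A'}\to\U_{P,\A'}$ as $\A'$-modules.

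For surjectivity I would argue as follows. By the triangular decomposition \eqref{eq:tripA}, $\U_{P,\A'}$ is spanned over $\A'$ by products $u^+ \cdot u^0 \cdot u^-$ with $u^+\in\U_{\A'}^+(w_\bullet)$, $u^0\in\U^{\imath0}_{\A'}$, $u^-\in\U^-_{\A'}$. One reduces to showing each such monomial lies in the image. The key point is that $\pi_P$ fixes $\U_P$ pointwise (noted after \eqref{eq:pi}), so it suffices to exhibit, for each PBW-type monomial spanning $\U_{P,\A'}$, an element of $\Ui_{\A'}$ whose $\pi_P$-image is that monomial \emph{or} that reduces to it after subtracting image elements of lower filtration degree. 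The natural candidates are built from the Chevalley generators of $\Ui$: the $B_i$ for $i\in\wI$ (whose leading term under $\pi_P$ is $F_i$), the $B_j=F_j, E_j, K_j^{\pm 1}$ for $j\in\I_\bullet$, and the $k_i^{\pm1}$ — together with all their iterated rescaled $q$-commutators, which remain in $\Ui_{\A'}$ by Lemma~\ref{lem:qcomi}. A triangularity/filtration argument, ordering monomials by degree in the $\Z\I$-grading \eqref{eq:grad} (or by the analogous $\imath$-weight filtration), then lets one peel off the top term of each target monomial and induct downward. Concretely, $\pi_P(B_i)=F_i$ exactly, $\pi_P(E_j)=E_j$ need not appear (the $\U^+$-part of $\U_P$ only involves $\I_\bullet$, which is consistent), $\pi_P(F_j)=F_j$, and $\pi_P(k_i)=k_i\in\U^{\imath0}$; iterated $q$-commutators of the $F$'s generate $\U^-_{\A'}$ by Proposition~\ref{prop:gc}, those of the $E_j$'s ($j\in\I_\bullet$) should generate $\U^+_{\A'}(w_\bullet)$ similarly, and the torus part is handled directly.

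The main obstacle I anticipate is the \emph{mixed} term: ensuring that the image of a product of Chevalley generators of $\Ui$ has the right triangular leading term $u^+u^0u^-$ without uncontrollable lower-order corrections that fail to be $\A'$-integral. Two subtleties feed into this. (a) The generator $B_i$ has the form $F_i - c_iq_i^{-\langle\cdots\rangle/2}T_{\bw}(E_{\tau i})K_i^{-1}$, whose second term contributes to the $\U^+(w_\bullet)$- and $\U^0$-parts; one must check that the division into $\pi^+_{w_\bullet}\otimes\pi^0\otimes\mathrm{id}$ handles this term integrally — this is where invertibility of $[\varepsilon_i]!$ and $1+q$ in $\A'$ (hence the passage to $\A'$ rather than $\A$) is presumably used, exactly as in Proposition~\ref{prop:gc}. (b) When commuting an $F$-string past a $B_i$ or a torus element to bring a product into triangular normal form, one generates terms from relation \eqref{Q4}; the scalars $(q_i-q_i^{-1}) = q^{-1}[\varepsilon_i](q-1)(q+1)$ appearing there are integral in $\A'$, so these corrections stay in $\Ui_{\A'}$, and one finishes by downward induction on degree. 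I would organize the final write-up as: (i) injectivity is free; (ii) set up the ordering and state the normal-form claim; (iii) verify the base case (Chevalley generators and their images); (iv) do the inductive step, invoking Lemma~\ref{lem:qcomi} and Proposition~\ref{prop:gc} to stay integral; (v) conclude surjectivity, hence the isomorphism.
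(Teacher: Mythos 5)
Your overall strategy is the paper's: injectivity is free from the Letzter map being a linear isomorphism, and surjectivity is reduced, via the triangular decomposition \eqref{eq:tripA}, to generating $\U^-_{\A'}$ from the $F_i$'s using Proposition~\ref{prop:gc} together with the stability of $\Ui_{\A'}$ under rescaled $q$-commutators (Lemma~\ref{lem:qcomi}). But the step you flag as ``the main obstacle'' --- controlling the lower-order corrections when you replace each $F_i$ by $B_i$ in an iterated rescaled $q$-commutator --- is exactly the point where your write-up has a genuine gap, and the paper resolves it with a specific tool you do not identify. Since $\pi^\imath$ is only a linear map and not an algebra map, you cannot directly transfer the subalgebra-generation statement of Proposition~\ref{prop:gc} to its image; what makes the downward induction close is that, for the filtration in which $\deg F_i=\deg E_i=1$ for $i\in\wI$ and \emph{all} remaining generators (including $E_j,F_j,K_j$ for $j\in\bI$) have degree $0$, the associated graded map $\gr(\pi^\imath):\gr(\Ui)\to\U_P$ is an \emph{algebra} isomorphism sending $B_i\mapsto F_i$ (this is the input from \cite[\S 2.4--2.5]{KY21}). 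With that, the image of $\gr(\pi^\imath_{\A'})$ is an $\A'$-subalgebra closed under rescaled $q$-commutators containing the $F_i$, so Proposition~\ref{prop:gc} applies on the nose and no explicit peeling-off of correction terms is needed. Your primary candidate filtration, the $\Z\I$-grading \eqref{eq:grad}, is not the right one ($B_i$ is not homogeneous for it, and the relevant filtration must assign degree $0$ to the $\bI$-part); your parenthetical ``$\imath$-weight filtration'' gestures at the correct object but is not pinned down.

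Two smaller points. First, your plan to generate the factors $\U^+_{\A'}(w_\bullet)$ and $\U^{\imath0}_{\A'}$ separately by $q$-commutators of the $E_j$'s and by the $k_i^{\pm1}$ is unnecessary: the paper simply observes that $\pi^\imath_{\A'}$ is a homomorphism of left $\U_{\A'}^+(w_\bullet)\U_{\A'}^{\imath0}$-modules, which reduces everything to the $\U^-_{\A'}$ factor at once. Second, your worry (a) about the second summand of $B_i$ under $\pi_P$ is moot: $T_{\bw}(E_{\tau i})$ lies in $\U^+(w_\bullet)^c$, so $\pi_P$ kills that term exactly and $\pi^\imath(B_i)=F_i$ with no correction; the localization to $\A'$ enters only through Proposition~\ref{prop:gc}, not through the projection.
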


\begin{proof}
Since $\pi^\imath:\Ui\rightarrow\U_P$ is a linear isomorphism, the map $\pi_{\A'}^\imath$ is injective. It suffices to show that $\pi^\imath_{\A'}$ is surjective.

We define a filtration on $\U$ by setting $\deg F_i=\deg E_i =1, \deg K_i=0, \deg E_j=\deg F_j=\deg K_j=0$ for $i\in \wI,j\in\bI$. This induces a filtration on $\Ui$ and a filtration on $\U_{\A'}$, and hence a filtration on $\Ui_{\A'}$ such that $\deg B_i=1$ for $i\in \wI$. By \cite[\S 2.4-2.5]{KY21}, the associated graded map $\gr (\pi^\imath): \gr(\Ui) \rightarrow \gr(\U_P)\cong\U_P$ is an algebra isomorphism, which sends $B_i\mapsto F_i$. By the above construction, $\gr (\pi^\imath_{\A'}): \gr(\Ui_{\A'}) \rightarrow \U_{P,\A'}$ is the restriction of $\gr(\pi^\imath)$ on $\gr(\Ui_{\A'})$ and hence $\gr (\pi^\imath_{\A'})$ is an injective algebra homomorphism.

Let $\Ui_{\A',\le n}\subset \Ui_{\A'}$ be the $\A'$-submodule spanning by elements with degree $\le n$. Let $A\in \Ui_{\A',\le n}, B\in \Ui_{\A',\le m}$ and $a\in \Z$. By Lemma \ref{lem:qcomi}, we have $\{A,B\}_{q^a}\in \Ui_{\A',\le m+n}$. Thus, $\gr(\Ui_{\A'})$ is closed under the rescaled $q$-commutators $\{\cdot, \cdot\}_{q^a}$.

We show that $\gr (\pi^\imath_{\A'})$ is surjective.  It follows from the definition that $\pi_{\A'}^\imath$ is a $\U_{\A'}^+(w_\bullet)\U_{\A'}^{\imath0}$-module homomorphism. Here we view $\U^\imath_{\A'}$ and $\U_{P,\A'}$ as $\U_{\A'}^+(w_\bullet)\U_{\A'}^{\imath0}$-modules via the left multiplication. Since any element in $\U_{\A'}^+(w_\bullet)\U_{\A'}^{\imath0}$ has degree $0$, $\gr (\pi_{\A'}^\imath)$ is also a $\U_{\A'}^+(w_\bullet)\U_{\A'}^{\imath0}$-module homomorphism. By \eqref{eq:tripA}, it suffices to show that $\U^-_{\A'}$ is in the image of $\gr (\pi^\imath_{\A'})$. Since $\gr (\pi^\imath)$ is an algebra homomorphism and $\gr(\Ui_{\A'})$ is closed under taking rescaled $q$-commutator, we conclude the image of $\gr (\pi^\imath)$ is an $\A'$-subalgebra which is closed under $q$-commutators. It moreover contains elements $F_i$ for $i\in \I$ by the definition. Hence it contains $\U^-_{\A'}$ by Proposition \ref{prop:gc}.

Therefore, $\gr \pi^\imath_{\A'}$ is an algebra isomorphism and then $\pi^\imath_{\A'}$ is a linear isomorphism.
\end{proof}

\begin{proposition}\label{prop:gci}
    The $\A'$-subalgebra $\Ui_{\A'}$ is the smallest $\A'$-subalgebra of $\Ui$ which contains elements $B_i$, $k_i^{\pm1}$ ($i\in\I_\circ$), $F_j$, $E_j$, $K_j^{\pm1}$ ($j\in\I_\bullet$), and is closed under taking the rescaled $q$-commutator $\{\cdot,\cdot\}_{q^a}$.
\end{proposition}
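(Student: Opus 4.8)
My plan is to identify $\Ui_{\A'}$ with the smallest $\A'$-subalgebra $\mathcal{G}\subseteq\Ui$ that contains $B_i,k_i^{\pm1}$ ($i\in\I_\circ$), $F_j,E_j,K_j^{\pm1}$ ($j\in\I_\bullet$) and is closed under all rescaled $q$-commutators. One inclusion is immediate: all of these generators lie in $\Ui_{\A'}$ (for $B_i$ this follows from \eqref{def:iQG} together with the fact that $T_{\bw}$ preserves $\U_\A$, the others being Chevalley generators of $\U$), and $\Ui_{\A'}$ is closed under rescaled $q$-commutators by Lemma~\ref{lem:qcomi}; hence $\mathcal{G}\subseteq\Ui_{\A'}$. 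All the content is in the reverse inclusion $\Ui_{\A'}\subseteq\mathcal{G}$.

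To prove that I would reuse the filtration from the proof of Proposition~\ref{prop:intp}: declare $\deg F_i=\deg E_i=1$ for $i\in\wI$ and all remaining Chevalley generators to have degree $0$, so that $\deg B_i=1$, and recall the graded $\A'$-algebra isomorphism $\gr(\pi^\imath_{\A'})\colon\gr(\Ui_{\A'})\xrightarrow{\ \sim\ }\U_{P,\A'}$ established there, which sends the symbol of $B_i$ to $F_i$ ($i\in\wI$) and fixes the degree-zero generators; being an $\A'$-algebra map it automatically intertwines rescaled $q$-commutators. The key auxiliary claim is then that the $\A'$-subalgebra $\mathcal{G}_0\subseteq\U_{P,\A'}$ generated by $F_i$ ($i\in\I$), $k_i^{\pm1}$ ($i\in\I_\circ$), $E_j,K_j^{\pm1}$ ($j\in\I_\bullet$) and closed under rescaled $q$-commutators is all of $\U_{P,\A'}$. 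Granting this, one finishes by induction on $n$ that $\Ui_{\A',\le n}\subseteq\mathcal{G}$: the base $n=-1$ is trivial since the filtration is exhaustive and bounded below; for $x\in\Ui_{\A',\le n}$ the element $\gr(\pi^\imath_{\A'})(\overline{x})$ is a homogeneous degree-$n$ element of $\U_{P,\A'}=\mathcal{G}_0$, hence is given by some word in the listed generators using products, $\A'$-combinations and rescaled $q$-commutators, and substituting $B_i$ for $F_i$ ($i\in\wI$) throughout that word produces $y\in\mathcal{G}$ with $\gr(\pi^\imath_{\A'})(\overline{y})=\gr(\pi^\imath_{\A'})(\overline{x})$, hence $\overline{y}=\overline{x}$, so $x-y\in\Ui_{\A',\le n-1}\subseteq\mathcal{G}$.

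For the auxiliary claim $\mathcal{G}_0=\U_{P,\A'}$ I would use the $\A'$-module decomposition $\U_{P,\A'}\cong\U_{\A'}^+(w_\bullet)\otimes\U^{\imath0}_{\A'}\otimes\U^-_{\A'}$ of \eqref{eq:tripA} and show each factor lies in $\mathcal{G}_0$. The torus factor $\U^{\imath0}_{\A'}$ is generated as an $\A'$-algebra by $k_i^{\pm1}$ and $K_j^{\pm1}$, so it is in $\mathcal{G}_0$. Since $\mathcal{G}_0$ contains every $F_i$, $i\in\I$, and is closed under rescaled $q$-commutators, Proposition~\ref{prop:gc} gives $\U^-_{\A'}\subseteq\mathcal{G}_0$. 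Finally $\U_{\A'}^+(w_\bullet)$ is the positive part of the integral form of the quantum group of the Levi attached to $\I_\bullet$, and the argument proving Proposition~\ref{prop:gc} (via the root-vector expansion of \cite{Su23} and Proposition~\ref{prop:Ucom}) goes through verbatim for that Levi, since $[\varepsilon_j]!$ is invertible in $\A'$ for $j\in\I_\bullet$; hence $\U_{\A'}^+(w_\bullet)$ is generated by the $E_j$, $j\in\I_\bullet$, under rescaled $q$-commutators, so $\U_{\A'}^+(w_\bullet)\subseteq\mathcal{G}_0$. Multiplying the three factors gives $\mathcal{G}_0=\U_{P,\A'}$.

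The hard part will be the bookkeeping in the inductive lifting step, not any of the algebra itself: one needs that the graded pieces $\gr_n(\Ui_{\A'})$ are torsion-free $\A'$-modules, that $\gr(\Ui_{\A'})\cong\U_{P,\A'}$ is a domain, and that dividing by $q-1$ inside $\gr(\Ui_{\A'})$ is compatible with the symbol map, so that replacing $B_i$ by $F_i$ in a homogeneous word genuinely does not drop the filtration degree and the produced $y$ really has the same symbol as $x$. The appeal to Proposition~\ref{prop:gc} for the Levi is a secondary point — routine, but worth spelling out.
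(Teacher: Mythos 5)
Your argument is correct and follows essentially the same route as the paper's: both reduce to the associated graded via the filtration and the isomorphism $\gr(\pi^\imath_{\A'}):\gr(\Ui_{\A'})\overset{\sim}{\to}\U_{P,\A'}$ from Proposition~\ref{prop:intp}, and then identify the image using the decomposition \eqref{eq:tripA} together with Proposition~\ref{prop:gc}. The only difference is one of explicitness: the paper declares "clear" both the generation of $\U^+_{\A'}(w_\bullet)$ by the $E_j$ under rescaled $q$-commutators (your Levi argument) and the lift from the graded to the filtered level (your induction on degree), whereas you spell these out.
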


\begin{proof}
    Let $\mathcal{U}^\imath\subset \U^\imath$ be the $\A'$-subalgebra described as in the proposition. Then it is clear that $\mathcal{U}^\imath\subset \U^\imath_{\A'}$. The filtration on $\Ui$ induces a filtration on $\mathcal{U}^\imath$, and $\gr(\mathcal{U}^\imath)\subset \gr(\Ui_{\A'})$. Moreover it is clear that $\gr(\pi^\imath_{\A'})(\gr (\mathcal{U}^\imath))\subset \U_{P,\A'}$ is an $\A'$-subalgebra, closed under rescaled $q$-commutators, and contains  $\U_{\A'}^+(w_\bullet)\U_{\A'}^{\imath0}$ and $F_i$ for $i\in \I$. Therefore it follows from \eqref{eq:tripA} and Proposition \ref{prop:gc} that $\mathcal{U}^\imath=\Ui_{\A'}$.
\end{proof}

\subsection{Integrality of braid group symmetries}

 \begin{table}[H]
\caption{Rank 2 Satake diagrams}
     \label{table:rktwo}
 \resizebox{5.4 in}{!}{%
\begin{tabular}{|c| c|| c|c|}
\hline
\begin{tikzpicture}[baseline=0]
\node at (0, -0.15) {SP};
\end{tikzpicture}
&
\begin{tikzpicture}[baseline=0]
\node at (0, -0.15) {Satake diagrams};
\end{tikzpicture}
&
\begin{tikzpicture}[baseline=0]
\node at (0, -0.15) {SP};
\end{tikzpicture}
&
\begin{tikzpicture}[baseline=0]
\node at (0, -0.15) {Satake diagrams};
\end{tikzpicture}
\\
\hline
AI$_2$
&
\begin{tikzpicture}[baseline=0, scale=1.2]
		\node at (-0.5,0.2) {$\circ$};
		\node at (0.5,0.2) {$\circ$};
       \draw[-]  (0.45, 0.2) to (-0.45, 0.2);
		\node at (-0.5, 0) {\small 1};
		\node at (0.5, 0) {\small 2};
	\end{tikzpicture}
&
CII$_n$
&
\begin{tikzpicture}[baseline=6,scale=1.1]
		\node  at (0,0.2) {$\bullet$};
		\node  at (0,0) {1};
		\draw (0.05, 0.2) to (0.45, 0.2);
		\node  at (0.5,0.2) {$\circ$};
		\node  at (0.5,0) {2};
		\draw (0.55, 0.2) to (0.95, 0.2);
		\node at (1,0.2) {$\bullet$};
		\node at (1,0) {3};
		\node at (1.5,0.2) {$\circ$};
		\node at (1.5,0) {4};
		\draw[-] (1.05,0.2)  to (1.45,0.2);
		\draw[-] (1.55,0.2) to (1.95, 0.2);
		\node at (2,0.2) {$\bullet$};
		\node at (2,0) {5};
		\draw (1.9, 0.2) to (2.1, 0.2);
		\draw[dashed] (2.1,0.2) to (2.7,0.2);
		\draw[-] (2.7,0.2) to (2.9, 0.2);
		\node at (3,0.2) {$\bullet$};
		\draw[implies-, double equal sign distance]  (3.1,0.2) to (3.7, 0.2);
		\node at (3.8,0.2) {$\bullet$};
		\node at (3.8,0) {$n$};
	\end{tikzpicture}
\\
\hline
CI$_2$
&
\begin{tikzpicture}[baseline=0, scale=1.2]
		\node at (-0.5,0.2) {$\circ$};
		\node at (0.5,0.2) {$\circ$};
		\draw[-implies, double equal sign distance]  (0.4, 0.2) to (-0.4, 0.2);
		\node at (-0.5,0) {\small 1};
		\node at (0.5,0) {\small 2};
	\end{tikzpicture}
&
CII$_4$
&
\begin{tikzpicture}[baseline=6,scale=1.5]
        \node at (-1, 0.2) {$\bullet$};
        \node at (-1,0) {1};
		\draw[-] (-0.95,0.2) to (-0.55, 0.2);
        \node at (-0.5,0.2) {$\circ$};
        \node at (-0.5,0) {2};
		\draw[-] (-.45,0.2) to (-0.05, 0.2);
		\node at (0,0.2) {$\bullet$};
		\node at (0,0) {3};
		\draw[implies-, double equal sign distance]  (0.05, 0.2) to (0.75, 0.2);
		\node at (0.8,0.2) {$\circ$};
		\node at (0.8,0) {4};
	\end{tikzpicture}
\\
\hline
G$_2$
&
\begin{tikzpicture}[baseline=0, scale=1.5]
		\node at (-0.5,0) {$\circ$};
		\node at (0.5,0) {$\circ$};
		\draw[->]  (0.4, 0.05) to (-0.4, 0.05);
		\draw[->]  (0.4, -0.05) to (-0.4, -0.05);
		\draw[->]  (0.4, 0) to (-0.4, 0);
		\node at (-0.5, -.2) {\small 1};
		\node at (0.5,-.2) {\small 2};
	\end{tikzpicture}
&
EIV
&
\begin{tikzpicture}[baseline = 0, scale =1.5]
		\node at (-1,0.2) {$\circ$};
        \node at (-1,0) {1};
		\draw (-0.95,0.2) to (-0.55,0.2);
		\node at (-0.5,0.2) {$\bullet$};
        \node at (-0.5,0) {2};
		\draw (-0.45,0.2) to (-0.05,0.2);
		\node at (0,0.2) {$\bullet$};
        \node at (0.1,0) {3};
		\draw (0.05,0.2) to (0.45,0.2);
		\node at (0.5,0.2) {$\bullet$};
		\node at (0.5,0) {4};
		\draw (0.55,0.2) to (0.95,0.2);
		\node at (1,0.2) {$\circ$};
		\node at (1,0) {5};
		\draw (0, 0.15) to (0,-0.25);
		\node at (0,-0.2) {$\bullet$};
		\node at (-.15,-0.15) {6};
\end{tikzpicture}
\\
\hline
BI$_n$
&
 \begin{tikzpicture}[baseline=0, scale=1.2]
		\node at (0.5,0) {$\circ$};
		\node at (1.0,0) {$\circ$};
		\node at (1.5,0) {$\bullet$};
		\draw[-] (0.55,0)  to (0.95,0);
		\draw[-] (1.05,0)  to (1.45,0);
		\draw[-] (1.55,0) to (1.8, 0);
		\draw[dashed] (1.8,0) to (2.5,0);
		\draw[-] (2.5,0) to (2.75, 0);
		\node at (2.8,0) {$\bullet$};
		\draw[-implies, double equal sign distance]  (2.85, 0) to (3.45, 0);
		\node at (3.5,0) {$\bullet$};
		\node at (0.5,-.2) {\small 1};
		\node at (1,-.2) {\small 2};
		\node at (1.5,-.2) {\small 3};
		\node at (3.5,-.2) {$n$};
	\end{tikzpicture}
&
AIII$_3$
&
\begin{tikzpicture}[baseline=0,scale=1.5]
		\node  at (-0.65,0) {$\circ$};
		\node  at (0,0) {$\circ$};
		\node  at (0.65,0) {$\circ$};
		\draw[-] (-0.6,0) to (-0.05, 0);
		\draw[-] (0.05, 0) to (0.6,0);
		\node at (-0.65,-0.15) {1};
		\node at (0,-0.15) {2};
		\node at (0.65,-0.15) {3};
        \draw[bend left,<->,red] (-0.65,0.1) to (0.65,0.1);
        \node at (0,0.2) {$\textcolor{red}{\tau}$};
	\end{tikzpicture}
\\
\hline
DI$_n$
&
  \begin{tikzpicture}[baseline=0, scale=1.2]
		\node at (0.55,0) {$\circ$};
		\node at (1.05,0) {$\circ$};
		\node at (1.5,0) {$\bullet$};
		\draw[-] (0.6,0)  to (1.0,0);
		\draw[-] (1.1,0)  to (1.4,0);
		\draw[-] (1.4,0) to (1.9, 0);
		\draw[dashed] (1.9,0) to (2.7,0);
		\draw[-] (2.7,0) to (2.9, 0);
		\node at (3,0) {$\bullet$};
		\node at (3.8,0.35) {$\bullet$};
		\node at (3.8,-0.35) {$\bullet$};
        \draw (3,0) to (3.8,0.35);
        \draw (3,0) to (3.8,-0.35);
		\node at (0.5,-.2) {\small 1};
		\node at (1,-.2) {\small 2};
		\node at (1.5,-.2) {\small 3};
	\end{tikzpicture}
&
AIII$_n$
&
 \begin{tikzpicture}[baseline=0,scale=1.0]
		\node  at (-2.1,0) {$\circ$};
		\node  at (-1.3,0) {$\circ$};
		\node  at (-0.5,0) {$\bullet$};
		\node  at (0.5,0) {$\bullet$};
		\node  at (1.3,0) {$\circ$};
		\node  at (2.1,0) {$\circ$};
		\draw[-] (-2.05,0) to (-1.35, 0);
		\draw[-] (-1.25,0) to (-0.55, 0);
		\draw[-] (0.55,0) to (1.25, 0);
		\draw[-] (1.35, 0) to (2.05,0);
		\node at (-2.1,-0.2) {1};
		\node at (-1.3,-0.2) {2};
		\node at (1.3,-0.2) {\small$n-1$};
		\node at (2.1,-0.2) {$n$ };
        \draw[dashed] (-0.5,0) to (0.5,0);
        \draw[bend left,<->,red] (-1.3,0.1) to (1.3,0.1);
        \draw[bend left,<->,red] (-2.1,0.1) to (2.1,0.1);
        \node at (0,0.6) {$\textcolor{red}{\tau} $};
	\end{tikzpicture}
\\
\hline
DIII$_4$
&
\begin{tikzpicture}[baseline=0]
\end{tikzpicture}
  \begin{tikzpicture}[baseline=0, scale=1.2]
		\node at (0.65,0) {$\circ$};
		\node at (1.5,0) {$\circ$};
		\draw[-] (0.7,0)  to (1.45,0);
		\node at (2.3,0.4) {$\bullet$};
		\node at (2.3,-0.4) {$\bullet$};
        \draw (1.55,0) to (2.3,0.4);
        \draw (1.55,0) to (2.3,-0.4);
		\node at (0.65,-.2) {\small 1};
		\node at (1.5,-.2) {\small 2};
		\node at (2.35,0.25) {\small 3};
		\node at (2.35,-0.25) {\small 4};
	\end{tikzpicture}
&
DIII$_5$
&
\begin{tikzpicture}[baseline=0,scale=1.5]
		\node at (-0.5,0) {$\bullet$};
		\node at (0,0) {$\circ$};
		\node at (0.5,0) {$\bullet$};
        \node at (1,0.3) {$\circ$};
        \node at (1,-0.3) {$\circ$};
		\draw[-] (-0.45,0) to (-0.05, 0);
		\draw[-] (0.05, 0) to (0.45,0);
		\draw[-] (0.5,0) to (0.965, 0.285);
		\draw[-] (0.5, 0) to (0.965,-0.285);
		\node at (-0.5,-0.2) {1};
		\node at (0,-0.2) {2};
		\node at (0.5,-0.2) {3};
        \node at (1,0.15) {4};
        \node at (1,-0.45) {5};
        \draw[bend left,<->,red] (1.1,0.3) to (1.1,-0.3);
        \node at (1.4,0) {$\textcolor{red}{\tau} $};
	\end{tikzpicture}
\\
\hline
AII$_5$
&
 \begin{tikzpicture}[baseline=0,scale=1.2]
		\node at (-0.5,0) {$\bullet$};
		\node  at (0,0) {$\circ$};
		\node at (0.5,0) {$\bullet$};
		\node at (1,0) {$\circ$};
		\node at (1.5,0) {$\bullet$};
		\draw[-] (-0.5,0) to (-0.05, 0);
		\draw[-] (0.05, 0) to (0.5,0);
		\draw[-] (0.5,0) to (0.95,0);
		\draw[-] (1.05,0)  to (1.5,0);
		\node at (-0.5,-0.2) {1};
		\node  at (0,-0.2) {2};
		\node at (0.5,-0.2) {3};
		\node at (1,-0.2) {4};
		\node at (1.5,-0.2) {5};
	\end{tikzpicture}
&
EIII
&
\begin{tikzpicture}[baseline = 0, scale =1.5]
		\node at (-1,0) {$\circ$};
        \node at (-1,-0.2) {1};
		\draw (-0.95,0) to (-0.55,0);
		\node at (-0.5,0) {$\bullet$};
        \node at (-0.5,-0.2) {2};
		\draw (-0.45,0) to (-0.05,0);
		\node at (0,0) {$\bullet$};
        \node at (0.1,-0.2) {3};
		\draw (0.05,0) to (0.45,0);
		\node at (0.5,0) {$\bullet$};
		\node at (0.5,-0.2) {4};
		\draw (0.55,0) to (0.95,0);
		\node at (1,0) {$\circ$};
		\node at (1,-0.2) {5};
		\draw (0,-0.05) to (0,-0.35);
		\node at (0,-0.4) {$\circ$};
		\node at (-.15,-0.35) {6};
        \draw[bend left, <->, red] (-0.9,0.1) to (0.9,0.1);
        \node at (0,0.2) {$\color{red} \tau $};
	\end{tikzpicture}
\\
\hline
\end{tabular}
}%
\end{table}

We list all rank $2$ Satake diagrams in Table~\ref{table:rktwo}; see also \cite{WZ23,DK19}. 

\begin{lemma}\label{lem:braid-rk2}
Let $(\I=\bI\cup\wI,\tau)$ be a Satake diagram of rank two and $(\U,\Ui)$ be the associated quantum symmetric pair. For any $i,j\in \wI$ such that $j\neq i,\tau i$, we have $\TT_i(B_j)\in \Ui_{\A'}$.
\end{lemma}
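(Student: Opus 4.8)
The plan is to reduce the claim $\TT_i(B_j)\in\Ui_{\A'}$ to a finite, explicit, case-by-case verification using the rank $2$ Satake diagrams listed in Table~\ref{table:rktwo}. Since $(\I=\bI\cup\wI,\tau)$ has rank two, the condition $j\neq i,\tau i$ together with $i,j\in\wI$ forces $\wItau$ to have cardinality $2$; that is, $i$ and $j$ are representatives of the two distinct $\tau$-orbits on $\wI$, and the diagram is one of the quasi-split rank $2$ types in the table (namely AI$_2$, CI$_2$, G$_2$, BI$_n$ with the white nodes being exactly the first two, and the AIII-type diagrams whose two white $\tau$-orbits are the relevant ones). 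In each such case, $\TT_i$ is the relative braid operator attached to the simple reflection $\bs_i$, and $\TT_i(B_j)$ is one of the relative PBW root vectors.

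\textbf{Key steps.} First I would enumerate which rank $2$ Satake diagrams actually admit a pair $i,j\in\wI$ with $j\neq i,\tau i$: this rules out all diagrams with $|\wItau|\le 1$ (e.g.\ CII$_n$, DIII$_4$, DIII$_5$, EIII, EIV, AIII$_3$, AII$_5$, and the rank $2$ tails of BI$_n$/DI$_n$ where only the first two nodes are white), leaving a short list. Second, for each surviving diagram I would write down the explicit formula for $\TT_i(B_j)$ using Proposition~\ref{prop:WZ} and the formulas in \cite{WZ23}; in the quasi-split case with $\bI=\emptyset$ these formulas are comparatively simple, since then $w_\bullet=e$, $T_{w_\bullet}=\Id$, and $B_i=F_i-c_iT_e(E_{\tau i})K_i^{-1}=F_i-c_iE_{\tau i}K_i^{-1}$ (or $F_i-c_iE_iK_i^{-1}$ when $\tau i=i$). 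Third, I would expand $\TT_i(B_j)$ as a polynomial in the Chevalley-type generators $B_p,k_p^{\pm 1}$ and check that each coefficient lies in $\A'=\A[(1+q)^{-1},[\varepsilon_p]!^{-1},\dots]$; this is exactly the step where the localization at $1+q$ and at the quantum Cartan integers is needed, mirroring the role of these denominators in Proposition~\ref{prop:gc}. Equivalently, and more efficiently, I would express $\TT_i(B_j)$ via iterated rescaled $q$-commutators $\{\cdot,\cdot\}_{q^a}$ of the generators listed in Proposition~\ref{prop:gci} and invoke Lemma~\ref{lem:qcomi} to conclude membership in $\Ui_{\A'}$ without tracking every coefficient by hand.

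\textbf{Main obstacle.} The crux is the explicit rank $2$ computation of $\TT_i(B_j)$ and the bookkeeping of denominators. For AI$_2$, CI$_2$ and BI$_n$ the operator $\TT_i$ restricted to the relevant subdiagram essentially agrees with a Lusztig operator up to the known rescaling (cf.\ Remark~\ref{rmk:bUi} and Proposition~\ref{prop:WZ}(1)--(2)), so integrality should follow from $T_i(\U_{\A})\subset\U_{\A}$ combined with the passage between the $F_i$-convention and the $\bF_i$-convention, which introduces only powers of $q_p^{1/2}(q_p-q_p^{-1})=q^{-1}[\varepsilon_p](q-1)(q+1)$ — all units in $\A'$ after localization. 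The genuinely delicate case is G$_2$ (and, if it appears, the diagonal-type rank $2$ fragments), where the higher divided powers $[1-a_{ij}]_i!$ in the quantum Serre relation and in the braid operator formulas produce denominators $[2]_i!,[3]_i!$ that must be absorbed into the $[\varepsilon_p]!^{-1}$ factors of $\A'$; I expect this to be the most computation-heavy part, but it is a finite check that can be done directly from the formulas in \cite{WZ23}, exactly as anticipated in the introduction's remark that one only needs to verify $\TT_i(x)$ on the finite generating set.
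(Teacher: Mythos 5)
Your overall strategy---reduce to a finite case check over Table~\ref{table:rktwo}, write $\TT_i(B_j)$ in terms of iterated rescaled $q$-commutators of the generators in Proposition~\ref{prop:gci}, and conclude by Lemma~\ref{lem:qcomi}---is exactly the method the paper uses. However, there is a genuine gap in your enumeration of cases. ``Rank two'' here means \emph{real} rank two, i.e.\ $|\wItau|=2$ (see Section~\ref{sec:rbga}); every diagram in Table~\ref{table:rktwo} has exactly two $\tau$-orbits of white nodes and therefore admits a pair $i,j\in\wI$ with $j\neq i,\tau i$. Your claim that CII$_n$, DIII$_4$, DIII$_5$, EIII, EIV, AIII$_3$, AII$_5$ and the BI$_n$/DI$_n$ diagrams are ``ruled out'' because $|\wItau|\le 1$ is false: for instance AII$_5$ has $\wI=\{2,4\}$ with $\tau=\Id$, CII$_n$ has $\wI=\{2,4\}$, AIII$_3$ has $\wItau=\{1,2\}$, and EIII has white nodes $1,5,6$ with $\tau:1\leftrightarrow 5$. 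These are not only admissible, they are the bulk of the cases, and they are precisely the ones your subsequent simplification ($\bI=\emptyset$, $w_\bullet=e$, $T_{w_\bullet}=\Id$) does not cover.

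Concretely, what goes missing is the verification for the non-quasi-split diagrams, where the closed formulas for $\TT_i(B_j)$ involve terms such as $T_{w_\bullet}^{-1}(B_j)$, $T_3^{-1}(B_4)$, extra factors $K_{\alpha_2-w_\bullet(\alpha_2)}$ or $B_2K_{\alpha_2-w_\bullet(\alpha_2)}$, and, in types AIII$_n$, DIII$_5$, EIII, coefficients like $(-1)^{n/2+1}$ coming from the AIV parameter constraints \eqref{def:ci}---the very reason $\sqrt{(-1)^n}$ is adjoined to $\A'$. Handling these requires Proposition~\ref{prop:Tj} (stability of $\Ui_{\A'}$ under $T_j^{\pm1}$ for $j\in\I_\bullet$) in addition to Lemma~\ref{lem:qcomi}; your argument never invokes it because you excluded all cases with $\I_\bullet\neq\emptyset$. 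The quasi-split cases you do treat (AI$_2$, CI$_2$, G$_2$, and your reasoning for absorbing the $[2]_i!,[3]_i!$ denominators into $\A'$) are handled correctly and agree with the paper, but as written the proof covers only a small fraction of the lemma's scope.
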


\begin{proof}
The rank two formulas of relative braid group symmetries on the universal $\imath$quantum group were given in \cite[Table 4]{WZ23}, and then the formulas for $\TT_i(B_j),j\neq i,\tau i$ are obtained by taking the central reduction and applying the isomorphism $\Omega$ in Remark~\ref{rmk:bUi}. For each rank $2$ Satake diagram in Table~\ref{table:rktwo}, we provide below the closed formulas for $\TT_i(B_j)$ obtained in this way.
\begin{itemize}
\item[$\boxed{\text{AI}_1}$] We have $\TT_2(B_1)=\frac{[B_1,B_2]_q}{q^{1/2}(q-q^{-1})}=\frac{\{B_1,B_2\}_q}{q^{1/2}+q^{-1/2}}$. 

\item[$\boxed{\text{CI}_2}$] We have 
\[
\TT_1(B_2)=\frac{1}{q[2](q-q^{-1})^2}\big[[B_2,B_1]_{q^2},B_1\big]+B_2
=\frac{\big\{\{B_2,B_1\}_{q^2},B_1\big\}}{[2](q^{1/2}+q^{-1/2})^2}+B_2.
\]

\item[$\boxed{\text{G}_2}$] We have
\begin{align*}
\TT_{1}(B_2) 
& =\frac{\Big\{\big\{ \{B_2, B_1\}_{q^3},B_1 \big\}_{q}, B_1\Big\}_{q^{-1}}}{[3]! (q^{1/2}+q^{-1/2})^3} 
-\frac{q[2]^2\{ B_2, B_1\}_{q^3} + q^3[3] \{B_2, B_1\}_{q^{-1}}}{[3]!(q^{1/2}+q^{-1/2})}.
\end{align*}

\item[$\boxed{\text{BI}_n, \text{DI}_n, \text{DIII}_4}$] We have 
\begin{align*}
\TT_2(B_1)&=\frac{1}{q_2 (q_2-q_2^{-1})^2}\Big[[B_1,B_2]_{q_2},T_{\bw}^{-1}(B_2)\Big]_{q_2} +B_2 K_{\alpha_2-\bw(\alpha_2)}
\\
&=\frac{1}{(q_2^{1/2}+q_2^{-1/2})^2}\Big\{\{B_1,B_2\}_{q_2},T_{\bw}^{-1}(B_2)\Big\}_{q_2} +B_2 K_{\alpha_2-\bw(\alpha_2)}.
\end{align*}



\item[$\boxed{\text{AII}_5}$] We have
$
\TT_4(B_2)=\frac{q^{-1/2}}{q-q^{-1}}[B_2,T_3^{-1}(B_4)]_q=\frac{1}{q^{1/2}+q^{-1/2}}\{B_2,T_3^{-1}(B_4)\}_q.
$

\item[$\boxed{\text{CII}_n}$] We have
\begin{align*}
\TT_4(B_2)&=\frac{1}{q(q-q^{-1})^2}\Big[B_2, \big[T_3^{-1}(B_4), T_{5\cdots n\cdots 5}^{-1}(B_4)\big]_q\Big]_{q}
+q^{3/2} T_3^2(B_2) K_{s_3(\alpha_4)-s_3\bw(\alpha_4)}
\\
&=\frac{\Big\{B_2, \big\{T_3^{-1}(B_4), T_{5\cdots n\cdots 5}^{-1}(B_4)\big\}_q\Big\}_{q}}{(q^{1/2}+q^{-1/2})^2}
+q^{3/2} T_3^2(B_2) K_{s_3(\alpha_4)-s_3\bw(\alpha_4)}.
\end{align*}

\item[$\boxed{\text{CII}_4}$] We have
\begin{align*}
\TT_2(B_4)&=\frac{1}{q[2](q-q^{-1})^2}\Big[\big[B_4,T_3^{-1}(B_2)\big]_{q^2}, T_3^{-1}(B_2)\Big]-\frac{q^{-1}[B_4,F_3]_{q^2} E_1 K_{3}}{[2](q-q^{-1})}
\\
&=\frac{1}{[2](q^{1/2}+q^{-1/2})^2}\Big\{\big\{B_4,T_3^{-1}(B_2)\big\}_{q^2}, T_3^{-1}(B_2)\Big\}-\frac{q^{-1/2}\{B_4,F_3\}_{q^2} E_1 K_{3}}{[2](q^{1/2}+q^{-1/2})}.
\\
\TT_4(B_2)&=\frac{1}{q^{3/2}(q-q^{-1})(q^2-q^{-2})}\big[B_2,[F_3,B_4]_{q^2}\big]_q
=\frac{\big\{B_2,\{F_3,B_4\}_{q^2}\big\}_q}{(q^{1/2}+q^{-1/2})(q+q^{-1})}.
\end{align*}
(For type CII$_4$, an overall factor of $\frac{1}{[2]}$ was missing in the formula for $\TT_2(B_4)$ in \cite[Table 3-4]{WZ23}. The formula here is obtained from the correct formula.)

\item[$\boxed{\text{EIV}}$] We have
\begin{align*}
\TT_1(B_5)=\frac{1}{q^{1/2}(q-q^{-1})}\Big[B_5,T_{234}^{-1}(B_1)\Big]_q=\frac{1}{q^{1/2}+q^{-1/2}}\Big\{B_5,T_{234}^{-1}(B_1)\Big\}_q.
\end{align*}

\item[$\boxed{\text{AIII}_3}$] We have 
$\TT_{1}(B_2)=\frac{1}{q(q-q^{-1})^2}\big[[B_2,B_1]_q,B_3\big]_q+B_2=\frac{\big\{\{B_2,B_1\}_q,B_3\big\}_q}{(q^{1/2}+q^{-1/2})^2}+B_2$. 

\item[$\boxed{\text{AIII}_n}$] In this case, by \eqref{def:ci}, $c_2=(-1)^a,c_{n-1}=(-1)^{b},a+b=4-n$. We have
\begin{align*}
\TT_2(B_1)&=\frac{(-1)^{n/2+1}}{q(q-q^{-1})^2} \Big[[B_1,B_2]_q,T_{\bw}^{-1}(B_{n-1})\Big]_q+(-1)^{n/2+b}q^{-1/2} K_{h_2-\bw(h_{n-1})} B_1
\\
&=\frac{(-1)^{n/2+1}}{(q^{1/2}+q^{-1/2})^2} \Big\{\{B_1,B_2\}_q,T_{\bw}^{-1}(B_{n-1})\Big\}_q
+(-1)^{n/2+b}q^{-1/2} K_{h_2-\bw(h_{n-1})} B_1.
\end{align*}

\item[$\boxed{\text{DIII}_5}$] In this case, by \eqref{def:ci}, $c_4=(-1)^a,c_{5}=(-1)^{b},a+b=-1$. We have
\begin{align*}
\TT_4(B_2)&=\frac{(-1)^{3/2}}{q(q-q^{-1})^2}\Big[[B_2,T_3^{-1}(B_5)]_q,B_4\Big]_q+(-1)^{1/2+a}q^{1/2} T_3^2(B_2) K_{s_3(h_5)-s_3\bw(h_4)}
\\
&=\frac{(-1)^{3/2}}{(q^{1/2}+q^{-1/2})^2}\Big\{\big\{B_2,T_3^{-1}(B_5)\big\}_q,B_4\Big\}_q
\\
&\quad +(-1)^{1/2+a}q^{1/2} T_3^2(B_2) K_{s_3(h_5)-s_3\bw(h_4)}.
\end{align*}

\item[$\boxed{\text{EIII}}$] In this case, by \eqref{def:ci}, $c_1=(-1)^a,c_{5}=(-1)^{b},a+b=-3$. We have
\begin{align*}
\TT_1(B_6)&=\frac{(-1)^{5/2}}{q(q-q^{-1})^2} \Big[[B_6, T_{23}^{-1}(B_1)]_q, T_4^{-1}(B_5)\Big]_q
\\
&\quad+(-1)^{3/2+b}q^{1/2} T_{32323}(B_6) K_{s_4\bw(h_1)-s_4(h_5)}
\\
&=(-1)^{5/2}\frac{\Big\{\{B_6, T_{23}^{-1}(B_1)\}_q, T_4^{-1}(B_5)\Big\}_q}{(q^{1/2}+q^{-1/2})^2} 
\\
&\quad+(-1)^{3/2+b}q^{1/2} T_{32323}(B_6) K_{s_4\bw(h_1)-s_4(h_5)},
\\
\TT_6(B_1)&=\frac{1}{q^{1/2}(q-q^{-1})}\big[B_1,T_{32}^{-1}(B_6)\big]_q=\frac{1}{q^{1/2}+q^{-1/2}}\big\{B_1,T_{32}^{-1}(B_6)\big\}_q.
\end{align*} 
\end{itemize}
By Lemma~\ref{lem:qcomi} and Proposition~\ref{prop:Tj}, it is clear from the above rank two formulas that $\TT_i(B_j)\in \Ui_{\A'}$ for $j\neq i,\tau i$.
\end{proof}

\begin{theorem}\label{thm:intt}
    For any $i\in\I_\circ$ and $x\in\Ui_{\A'}$, we have $\TT_i(x),\TT_i^{-1}(x)\in\Ui_{\A'}$.
\end{theorem}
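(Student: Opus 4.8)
The plan is to reduce the theorem to a finite check on the algebra generators of $\Ui_{\A'}$ provided by Proposition~\ref{prop:gci}. First I would record two easy reductions. Since $\bs_i=\bs_{\tau i}$ and hence $\TT_i=\TT_{\tau i}$, it is enough to treat $i\in\wItau$. And since $\TT_i^{-1}=\sigma_\tau\circ\TT_i\circ\sigma_\tau$ by Lemma~\ref{lem:sigmaT}, once $\TT_i$ is known to preserve $\Ui_{\A'}$ it suffices to know the same for the anti-involution $\sigma_\tau$. This last fact is itself immediate from Proposition~\ref{prop:gci}: by \eqref{sigmatau}, $\sigma_\tau$ permutes the generating set $\{B_p,\,k_p^{\pm1}\ (p\in\wI),\,F_j,\,E_j,\,K_j^{\pm1}\ (j\in\bI)\}$, while for an anti-homomorphism $\sigma_\tau(\{A,B\}_{q^a})=-q^a\{\sigma_\tau(A),\sigma_\tau(B)\}_{q^{-a}}$ with $-q^a\in\A'$; hence $\{x\in\Ui_{\A'}:\sigma_\tau(x)\in\Ui_{\A'}\}$ is an $\A'$-subalgebra of $\Ui$ closed under rescaled $q$-commutators and containing the generators, so by minimality it equals $\Ui_{\A'}$.

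For $\TT_i$ itself the key observation is that $\mathcal{V}:=\{x\in\Ui_{\A'}:\TT_i(x)\in\Ui_{\A'}\}$ is an $\A'$-subalgebra of $\Ui$ closed under rescaled $q$-commutators: this uses that $\TT_i$ is a $\Q(q^{1/2})$-algebra automorphism of $\Ui$ (Proposition~\ref{prop:WZ}), that $\Ui_{\A'}$ is closed under $\{\cdot,\cdot\}_{q^a}$ (Lemma~\ref{lem:qcomi}), and that $\TT_i(\{x,y\}_{q^a})=\{\TT_i(x),\TT_i(y)\}_{q^a}$. By Proposition~\ref{prop:gci} it then suffices to check that $\TT_i$ maps each generator into $\Ui_{\A'}$, and I would do this using the formulas of Proposition~\ref{prop:WZ}: for $k_p^{\pm1}$ one gets $K_{\pm\bs_i(\alpha_p-\alpha_{\tau p})}\in\U^0_\A\cap\Ui\subseteq\Ui_{\A'}$; for $F_j,E_j,K_j^{\pm1}$ with $j\in\bI$ one gets $F_{\tau\tau_i j},E_{\tau\tau_i j},K_{\tau\tau_i j}^{\pm1}$ with $\tau\tau_i j\in\bI$, again generators; for $B_i$ and $B_{\tau i}$ one gets expressions of the form $c\,T_{\bw}^{-2}(B_\bullet)K_\nu$ with $B_\bullet\in\{B_i,B_{\tau i}\}$, $K_\nu\in\U^0_\A\cap\Ui$, $c$ a product of an integer power of $q_i$ and $c_i^{\mp1/2}c_{\tau i}^{\pm1/2}$ (all in $\A'$), and $T_{\bw}^{-2}$ preserves $\Ui_{\A'}$ by Proposition~\ref{prop:Tj} since $\bw=w_\bullet$ is a product of $s_j$ with $j\in\bI$; and finally for $B_p$ with $p\neq i,\tau i$ I would invoke that the $\TT_i$-action on $B_p$ is governed by the Satake subdiagram supported on $\bI\cup\{i,\tau i,p,\tau p\}$, whose black part is again $\bI$, so its $\imath$quantum group embeds in $\Ui$ matching generators and its relative braid symmetry agrees with $\TT_i$ there. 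That subdiagram has real rank $2$, hence either separates $i$ from $p$ (in which case $\TT_i(B_p)=B_p$) or appears in Table~\ref{table:rktwo}, where Lemma~\ref{lem:braid-rk2} gives $\TT_i(B_p)\in\Ui_{\A'}$. Thus $\mathcal{V}=\Ui_{\A'}$, and combined with the first paragraph this settles $\TT_i^{-1}$ as well.

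I expect the main obstacle to be the last point: one must make rigorous that $\TT_i(B_p)$ for $p\neq i,\tau i$ is genuinely computed inside a rank $2$ Satake subdiagram — the functoriality of the relative braid symmetries of \cite{WZ23} under passing to Satake subdiagrams — and dispatch the degenerate disconnected configurations; once this is in place, Lemma~\ref{lem:braid-rk2}, whose proof is the explicit rank $2$ case-check, does the rest. A secondary, purely clerical task is verifying that each scalar occurring in Proposition~\ref{prop:WZ}(4)--(5) (signs, the square roots $c_\bullet^{\pm1/2}$, powers of $q_i$) lies in $\A'$; this is exactly what pins down the localization, in particular the adjunction of $\sqrt{(-1)^n}$ needed in type AIV.
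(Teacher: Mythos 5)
Your proposal is correct and follows essentially the same route as the paper: reduce to the finite generating set of Proposition~\ref{prop:gci} using that $\{x:\TT_i(x)\in\Ui_{\A'}\}$ is a subalgebra closed under rescaled $q$-commutators, handle the generators via Proposition~\ref{prop:WZ} and Lemma~\ref{lem:braid-rk2}, and treat $\TT_i^{-1}$ through $\sigma_\tau$ via Lemma~\ref{lem:sigmaT}. The one point you flag as the main obstacle --- that $\TT_i(B_p)$ for $p\neq i,\tau i$ is computed inside the rank-$2$ Satake subdiagram, so that Lemma~\ref{lem:braid-rk2} applies --- is left implicit in the paper's proof, so your explicit treatment of it (including the disconnected case $\TT_i(B_p)=B_p$) is if anything slightly more careful.
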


\begin{proof}
    We first show the statement for $\TT_i$. Since $\TT_i$ is an algebra automorphism, it preserves the rescaled $q$-commutator $\{\cdot,\cdot\}_{q^a}$ for any $a\in \Z$. By Lemma \ref{lem:qcomi}, the subspace 
    \[
    \{x\in\Ui\mid \TT_i(x)\in\Ui_{\A'}\}\subset\Ui
    \]
    is an $\A'$-subalgebra of $\Ui$ and closed under the rescaled $q$-commutator. Thanks to Proposition \ref{prop:gci}, it suffices to show that $\TT_i(x)\in\Ui_{\A'}$ when $x$ lies in
    \begin{align}\label{eq:genUi}
    \big\{B_i, k_i^{\pm1} , F_j, E_j, K_j^{\pm1} |i\in \wI, j\in\I_\bullet\big\}.
    \end{align}

    If $x=E_j,F_j,K_j$ for $j\in \bI$, the formula of $\TT_i(x)$ is given in Proposition~\ref{prop:WZ}(2); if $x=k_p$ for $p\in \wI$, the formula of $\TT_i(x)$ is given in Proposition~\ref{prop:WZ}(3); if $x=B_i,B_{\tau i}$, the formula of $\TT_i(x)$ is given in Proposition~\ref{prop:WZ}(4)(5). It is clear from these formulas that $\TT_i(x)\in\Ui_{\A'}$. If $x=B_j,j\neq i,\tau i$, then $\TT_i(x)\in\Ui_{\A'}$ is proved in Lemma~\ref{lem:braid-rk2}. The proof is completed.

    We show the statement for $\TT_i^{-1}$. Recall from Lemma~\ref{lem:sigmaT} that $\TT_i^{-1}=\sigma_\tau \circ \TT_i \circ \sigma_{\tau}$. It suffices to show that $\sigma_\tau$ preserves $\Ui_{\A'}$. By Lemma \ref{lem:qcomi}, the subspace $\{x\in\Ui\mid \sigma_\tau(x)\in\Ui_{\A'}\}\subset\Ui$
    is an $\A'$-subalgebra of $\Ui$ and closed under the rescaled commutator $\{\cdot,\cdot\}_{q^a}$. By Proposition \ref{prop:gci}, it suffices to show that $\sigma_\tau(x)\in\Ui_{\A'}$ when $x$ lies in \eqref{eq:genUi}. This is clear from the definition \eqref{sigmatau} of $\sigma_\tau$.

    The proof is completed.
   \end{proof} 

\begin{remark}
It is expected that Theorem~\ref{thm:intt} remains valid over $\A=\Z[q^{\pm 1/2}]$ for quasi-split types. However, the localization $\A'$ is necessary in the current proof. The current ring $\A'$ is sufficient for the intended future applications. It is interesting to establish a new approach to prove Theorem~\ref{thm:intt} for $\Ui_\A$.
\end{remark}

\begin{proposition}\label{prop:piT}
For any $i,j\in \wI$ such that $j\neq i,\tau i$, we have
\begin{align}\label{eq:piT}
\pi^\imath(\TT_i(B_j))=\pi_P(T_{\bs_i}(B_j)).
\end{align}
\end{proposition}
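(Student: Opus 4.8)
The plan is to reduce the identity \eqref{eq:piT} to a statement about rank-two (or, more precisely, rank-one-in-$\wI$) Satake subdiagrams, where both sides can be computed from the explicit formulas already assembled in Lemma~\ref{lem:braid-rk2}. Fix $i,j\in\wI$ with $j\neq i,\tau i$, and let $\I'=\I_{\bullet,i}\cup\I_{\bullet,j}\cup(\text{the }\tau\text{-orbits of }i,j)$ together with the induced Satake structure; this is a Satake subdiagram of real rank (at most) two. By the compatibility of relative braid group symmetries with Satake subdiagrams --- the symmetries $\TT_i$ are built from the rank-one data at $i$, so $\TT_i(B_j)$ already lives in the subalgebra $\Ui(\I')\subset\Ui$ --- it suffices to prove \eqref{eq:piT} inside $\Ui(\I')$. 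Similarly, $\bs_i\in W_{\bullet,i}\subset W(\I')$, and $T_{\bs_i}(B_j)\in\U_P$ depends only on the rank-two data. So the first step is to set up this reduction carefully, checking that $\pi^\imath$ and $\pi_P$ intertwine correctly with the inclusion of the rank-two block (both are defined via the triangular decomposition, and $\I'$ is $\tau$-stable and $\I_\bullet$-saturated, so the restrictions agree).

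Next, having reduced to rank two, I would split into the finite list of cases in Table~\ref{table:rktwo} where $\wI$ contains two non-$\tau$-equivalent nodes, namely AI$_2$, CI$_2$, G$_2$, BI$_n$/DI$_n$/DIII$_4$, AII$_5$, CII$_n$, CII$_4$, EIV, AIII$_3$, AIII$_n$, DIII$_5$, EIII. For each case, the left-hand side $\TT_i(B_j)$ is given by the closed formula in the proof of Lemma~\ref{lem:braid-rk2}, expressed as an iterated rescaled $q$-commutator of $B_i,B_j$ (and possibly $T_{\bw}^{-1}(B_\bullet)$, $T_k^{-1}(B_\bullet)$ for $\bullet\in\I_\bullet$) plus a lower-order correction term of the form $(\text{scalar})\,x\,K_\mu$ with $x\in\U_\bullet\U^{\imath 0}$. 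Applying $\pi^\imath=\pi_P|_{\Ui}$: since $\pi_P$ is the identity on $\U_P$ and $B_j,F_j$ and all the $T_k^{-1}(B_\bullet)$ with $\bullet\in\I_\bullet$ lie in $\U_P$, while the main iterated-commutator term involves only $B_i\mapsto F_i$ under $\gr(\pi^\imath)$ --- wait, more carefully: $\pi^\imath(B_i)$ is \emph{not} simply $F_i$, it has lower-order terms too. So the honest computation is: expand $\TT_i(B_j)$ as a polynomial/commutator expression, apply $\pi^\imath$ using that $\pi^\imath$ is linear and that $\pi^\imath(B_k)=F_k+(\text{terms in }\U^+(w_\bullet)\U^{\imath 0}\U^-\text{ of lower }\bI\text{-free degree})$, and compare with $T_{\bs_i}(B_j)$ computed directly in $\U_P\subset\U$ via Lusztig's $T_{\bs_i}=T_{w_{\bullet,i}}T_{w_\bullet}^{-1}$ (using $\bwi=\bs_i w_\bullet$) acting on $F_j$.

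A cleaner route that avoids the messy lower-order bookkeeping: use the filtration from the proof of Proposition~\ref{prop:intp}, under which $\gr(\pi^\imath)$ is the algebra isomorphism sending $B_i\mapsto F_i$ and fixing $\U_P$. Both sides of \eqref{eq:piT} lie in $\U_P$, which sits in filtration degree $0$; but $\TT_i(B_j)$ has filtration degree $1$ (it involves the degree-$1$ element $B_i$), so this graded argument does not immediately apply --- instead I would argue that $\pi^\imath\circ\TT_i$ and $\pi_P\circ T_{\bs_i}$ are both $\U_\bullet\U^{\imath 0}$-equivariant maps $\Ui\to\U_P$, agree on the generators of the relevant rank-two subalgebra by direct inspection of the Lemma~\ref{lem:braid-rk2} formulas (each rescaled $q$-commutator $\{A,B\}_{q^a}$ is sent by $\pi^\imath$ to $\{\pi^\imath(A),\pi^\imath(B)\}_{q^a}$ only up to lower-order terms, but on the \emph{top} graded piece it is exact), and then lift the equality from $\gr$ to the filtered level using that $\pi^\imath$ is injective (Proposition~\ref{prop:intp}) and that the difference of the two sides, being in $\U_P$ of degree $0$ and mapped to $0$ in $\gr$, must vanish. \textbf{The main obstacle} is precisely controlling these lower-order correction terms: the identity $\pi^\imath(\{A,B\}_{q^a})=\{\pi^\imath(A),\pi^\imath(B)\}_{q^a}$ fails exactly (it only holds modulo lower filtration), so one cannot blindly push $\pi^\imath$ through the commutator formulas. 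The resolution I expect to use is that on the right-hand side $T_{\bs_i}(B_j)=T_{w_{\bullet,i}}T_{w_\bullet}^{-1}(F_j)$ can be computed once and for all as an element of $\U^-$ (it is a genuine PBW-type root vector, since $\bs_i(\alpha_j)$ is a root and $T_{\bs_i}$ is Lusztig's symmetry restricted to the rank-two block), and then matching it against $\pi^\imath(\TT_i(B_j))$ reduces to finitely many explicit rank-two verifications using the formulas in Lemma~\ref{lem:braid-rk2} and the known formula for $\pi^\imath$ on rank-one root vectors from Appendix~\ref{app:A}.
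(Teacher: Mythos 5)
Your overall strategy (reduce to the real--rank--two Satake subdiagram generated by $\I_\bullet\cup\{i,\tau i,j,\tau j\}$ and then verify the identity case by case from the closed formulas in Lemma~\ref{lem:braid-rk2}) is different from the paper's argument and could in principle be pushed through, but as written it has a genuine gap exactly at the point you yourself flag as ``the main obstacle,'' and your proposed resolution of that obstacle is incorrect. The right-hand side of \eqref{eq:piT} is $\pi_P(T_{\bs_i}(B_j))$ with $B_j=F_j-c_jq_j^{-\langle h_j,\bw\alpha_{\tau j}\rangle/2}T_{\bw}(E_{\tau j})K_j^{-1}$; it is \emph{not} equal to $\pi_P(T_{w_{\bullet,i}}T_{w_\bullet}^{-1}(F_j))$ as you assert in your final paragraph, because $B_j\neq F_j$ for $j\in\wI$. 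The entire content of the proposition is that the contributions of the $E$-parts of the various $B_k$'s disappear after applying $\pi_P$, and you never supply an argument for this. The paper's proof handles it structurally: by \cite[Proposition 6.5, Theorem 6.6]{WZ23} there is a single non-commutative polynomial $\cP(u_1,u_2,v_1,v_2,z)$, \emph{linear in} $z$, with $\TT_i(B_j)=\cP(B_i,B_{\tau i},k_i,k_{\tau i},B_j)$ and $T_{\bs_i}(B_j)=\cP(B_i,B_{\tau i},k_i,k_{\tau i},F_j)+\cP(\hat B_i,\hat B_{\tau i},k_i,k_{\tau i},T_{\bw}(E_{\tau j})K_j^{-1})$; linearity in $z$ splits both sides into an $F_j$-part (which literally coincide) and an $E$-part, and the $E$-parts are annihilated by $\pi_P$ because $\alpha_{\tau j}$ occurs with multiplicity $-1$ in their weights (so they cannot lie in $\U^+(w_\bullet)\otimes\U^{\imath 0}\otimes\U^-$). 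Some weight observation of this kind is indispensable, and it is absent from your plan; without it, ``finitely many explicit rank-two verifications'' cannot be completed, since you would still have to track what $\pi^\imath$ does to nested $q$-commutators of elements each carrying a hidden $T_{\bw}(E_{\tau k})K_k^{-1}$ summand.

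Two smaller inaccuracies: first, $\pi^\imath(B_k)=F_k$ exactly (the $E$-part of $B_k$ has $\alpha_{\tau k}$-multiplicity $-1$ and is already killed by $\pi^+_{w_\bullet}$), so your worry that ``$\pi^\imath(B_i)$ is not simply $F_i$'' is misplaced --- the genuine failure is that $\pi^\imath$ is not an algebra map, so it does not commute with the $q$-commutators, which you do note. Second, your filtration remarks are off: $\U_P$ is not concentrated in degree $0$ (it contains $F_i$, $i\in\wI$, of degree $1$), and $\TT_i(B_j)$ generally has filtration degree equal to the height of $\bs_i(\alpha_j)$ in the circle nodes, not $1$. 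Neither of these is fatal, but they indicate that the filtered/graded detour you sketch would need to be reworked; the weight argument above is both simpler and sufficient.
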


\begin{proof}
By \cite[Proposition 6.5, Theorem 6.6]{WZ23} (and applying the central reduction), there exist a non-commutative polynomial $\cP(u_1,u_2,v_1,v_2,z)$, which is linear in $z$, such that 
\begin{align}\notag
\TT_i(B_j)&=\cP(B_i,B_{\tau i},k_i,k_{\tau i},B_j)
\\
&=\cP(B_i,B_{\tau i},k_i,k_{\tau i},F_j)+\cP(B_i,B_{\tau i},k_i,k_{\tau i},T_{\bw}(E_{\tau j})K_j^{-1}),
\\
T_{\bs_i}(B_j)&=\cP(B_i,B_{\tau i},k_i,k_{\tau i},F_j)+\cP(\hat{B}_i,\hat{B}_{\tau i},k_i,k_{\tau i},T_{\bw}(E_{\tau j})K_j^{-1}).
\end{align}
where $\hat{B}_i=T_{\bs_i}\circ \TT_i^{-1}(B_i)$.  Note that $\alpha_{\tau j}$ appears with multiplicity $-1$ in the weights of the following two terms
\[
\cP(B_i,B_{\tau i},k_i,k_{\tau i},T_{\bw}(E_{\tau j})K_j^{-1}),\qquad
\cP(\hat{B}_i,\hat{B}_{\tau i},k_i,k_{\tau i},T_{\bw}(E_{\tau j})K_j^{-1}).
\]
By the construction of $\pi_P$, these two terms are annihilated by $\pi_P$. Hence, the desired identity \eqref{eq:piT} follows.
\end{proof}

\subsection{PBW basis for $\Ui_{\A'}$}

We first construct a (rescaled) PBW basis for $\Ui$ following \cite{LYZ24}. By \cite[Lemma 2.3]{LYZ24}, $\cR^+$ is the disjoint union of $\cR^+(\bbw_0)$ and $\cR_{\bullet}^+=\cR^+(w_\bullet)$. For $\gamma\in \cR^+$, we defined the root vectors $F_\gamma, E_\gamma$ in the same way as \eqref{eq:F-root}, using a reduced expression of $w_\bullet$.

Let $\beta\in\cR^+(\bbw_0)$. We construct the root vectors $B_\beta$ in $\Ui$ as follows. Write $\beta=w(\alpha_{i_0})$ for some $w\in W,i_0\in \I$. 
Let $\mathbf{B}_\beta$ denote the root vector constructed in \cite[Section 3-4]{LYZ24}. We define
\begin{align}\label{def:Bbeta}
B_\beta:=q_{i_0}^{1/2}(q_{i_0}-q_{i_0}^{-1})\mathbf{B}_\beta.
\end{align}
By \cite[Proposition 4.1]{LYZ24}, $B_\beta=B_i$ if $\beta=\alpha_i,i\in \wI$.

Equivalently, $B_\beta$ can be precisely constructed as follows. When the underlying Satake diagram is rank 1 (i.e., $\bbw_0=\bs_i$ for some $i\in \wI$), $B_\beta$ are listed explicitly in Appendix~\ref{app:A}. In the higher rank case, fix a reduced expression $\bbw_0=\bs_{i_1} \cdots \bs_{i_{\ell}}$ in $W^\circ$. For any $\beta\in \cR^+(\bbw_0)$, there exist a unique $1\leq j\leq \ell$ and a unique $\beta_0\in \cR^+(\bs_j)$ such that $\beta=\bs_{i_1} \cdots \bs_{i_{j-1}} (\beta_0)$. By \cite[(4.1)]{LYZ24}, we have
\begin{align}\label{eq:Bbeta}
B_\beta=\TT_{i_1}\TT_{i_2}\cdots \TT_{i_{j-1}}(B_{\beta_0}).
\end{align}

\begin{proposition}\label{le:bau}
For any reduced expression of $\bbw_0$ in $W^\circ$ and any $\beta\in\mathcal{R}^+(\bbw_0)$, the element $B_\beta$ belongs to $\Ui_{\A'}$.
\end{proposition}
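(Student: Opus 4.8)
The plan is to reduce the statement to two ingredients already available: the integrality of the relative braid group symmetries (Theorem~\ref{thm:intt}) and the rank-one base case. By \eqref{eq:Bbeta}, any root vector $B_\beta$ for $\beta\in\cR^+(\bbw_0)$ is obtained as $\TT_{i_1}\TT_{i_2}\cdots\TT_{i_{j-1}}(B_{\beta_0})$ for a suitable $\beta_0\in\cR^+(\bs_j)$, i.e.\ $\beta_0$ lies in a rank-one Satake subdiagram. Since Theorem~\ref{thm:intt} tells us that each $\TT_i$ (and $\TT_i^{-1}$) preserves $\Ui_{\A'}$, it suffices to prove that $B_{\beta_0}\in\Ui_{\A'}$ whenever $\beta_0$ is a positive root of a rank-one Satake subdiagram $(\I_{\bullet,i}=\{i,\tau i\}\cup\bI,\tau)$.

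First I would set up the reduction carefully. Fix a reduced expression $\bbw_0=\bs_{i_1}\cdots\bs_{i_\ell}$ in $W^\circ$; for $\beta\in\cR^+(\bbw_0)$ extract the unique $j$ and $\beta_0\in\cR^+(\bs_j)$ with $\beta=\bs_{i_1}\cdots\bs_{i_{j-1}}(\beta_0)$, and invoke \eqref{eq:Bbeta}. The root vector $B_{\beta_0}$ is, by construction in \cite{LYZ24}, a root vector for the rank-one $\imath$quantum group attached to $i_j$; equivalently it is one of the elements listed explicitly in Appendix~\ref{app:A}. So the whole proposition comes down to the claim: \emph{every rank-one real root vector listed in Appendix~\ref{app:A} lies in $\Ui_{\A'}$.}

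The main work — and the main obstacle — is therefore the rank-one verification. Here I would argue that this is precisely the content of Appendix~\ref{app:A} (``we provide explicit formulas for real rank 1 root vectors and verify that they belong to the integral form''), so the cleanest route is to cite that appendix. If one wants a self-contained argument inside the proof, the strategy is: for each rank-one Satake type (AI$_1$, AII, AIII$_{11}$ split and non-split, AIV, and the types with $\bI\neq\emptyset$ such as BI$_n$/DI$_n$/DIII/CII etc.), the root vectors $B_{\beta_0}$ for $\beta_0\in\cR^+(\bs_i)$ are built from $B_i$, $B_{\tau i}$, the $\U_\bullet$-part, and $\U^{\imath 0}$ via iterated rescaled $q$-commutators $\{\cdot,\cdot\}_{q^a}$ — compare the explicit formulas already displayed in Lemma~\ref{lem:braid-rk2}, where $\TT_i(B_j)$ in rank two is written as an iterated rescaled $q$-commutator with coefficients in $\A'$. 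Since $B_i,B_{\tau i}\in\Ui_{\A'}$ by definition, $E_j,F_j,K_j^{\pm1},k_i^{\pm1}\in\Ui_{\A'}$, and $T_{\bw}^{\pm1}$ preserves $\U_{\A'}$ (hence preserves the relevant subalgebras), Lemma~\ref{lem:qcomi} and Proposition~\ref{prop:gci} show any such iterated rescaled $q$-commutator with $\A'$-coefficients stays in $\Ui_{\A'}$. One must double-check that the coefficients appearing in the Appendix~\ref{app:A} formulas (quantum integers $[\varepsilon_i]$, factors $1+q$, powers of $2$, and — in the AIV case — $\sqrt{(-1)^n}$) are all invertible in $\A'$; this is exactly why $\A'$ was defined with these elements inverted in Section~\ref{sec:proj}.

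In short: the structure of the proof is (i) reduce via \eqref{eq:Bbeta} and Theorem~\ref{thm:intt} to the rank-one case, and (ii) settle the rank-one case by the explicit formulas and integrality check of Appendix~\ref{app:A}, using that all coefficients lie in $\A'$ by construction. The delicate point is purely bookkeeping in step (ii) — making sure no formula in the appendix hides a denominator outside $\A'$ — which is why I expect the proof to simply read: ``By \eqref{eq:Bbeta}, Theorem~\ref{thm:intt} reduces us to the rank-one case, which is Appendix~\ref{app:A}.''
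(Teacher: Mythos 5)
Your proposal matches the paper's proof essentially verbatim: the paper reduces to the rank-one case via \eqref{eq:Bbeta} and Theorem~\ref{thm:intt}, and then cites Appendix~\ref{app:A} for the rank-one verification, exactly as you predicted. The additional discussion of how the appendix's integrality check works (rescaled $q$-commutators, invertibility of the coefficients in $\A'$) is accurate and consistent with what the appendix actually does.
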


\begin{proof}
By Theorem \ref{thm:intt} and expression \eqref{eq:Bbeta}, it suffices to show that $B_{\beta}\in\Ui_{\A'}$ for $\beta\in\mathcal{R}^+(\mathbf{r}_i)$ with $i\in\I_\circ$. This is checked in Appendix \ref{app:A}.
\end{proof}

Following \cite[(4.2)]{LYZ24}, using the reduced expression of $\bs_{i_j}$ as Table \ref{table:localSatake}, We obtain a reduced expression of $\bbw_0$ in $W$, which induces a total order on $\mathcal{R}^+(\bbw_0)$. For any $\ba=(a_\beta)_{\beta\in\mathcal{R}^+(\bbw_0)}\in\N^{\ell(\bbw_0)}$, we set
\[
B^\ba=\prod_{\beta\in\mathcal{R}^+(\bbw_0)}B_\beta^{a_\beta}.
\]

Let $Y^\imath=\{\mu\in \Z\I|\theta\mu=\mu\}$. Clearly, $K_\mu$ for $\mu\in Y^\imath$ form a basis for $\U^{\imath0}_{\A'}$.

\begin{theorem}\label{thm:PBW}
    Let $\bbw_0=\bs_{i_1}\bs_{i_2}\cdots\bs_{i_\ell}$ be a reduced expression for the longest element $\bbw_0$ in $W^\circ$, and $\bw=s_{j_1}s_{j_2}\cdots s_{j_r}$ a reduced expression of the longest element $w_\bullet$ in $W_\bullet$. Then the monomials
$$B^\ba\cdot F_\bullet^{\bc} \cdot E_\bullet^{\bd} \cdot K_\mu \qquad (\ba\in \N^{\ell(\bbw_0)},\bc,\bd\in\N^{r},\mu\in Y^{\imath})$$
form an $\A'$-basis for $\Ui_{\A'}$.
\end{theorem}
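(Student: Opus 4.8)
The plan is to push everything through the Letzter projection $\pi^\imath_{\A'}$. By Proposition~\ref{prop:intp} this is an isomorphism of $\A'$-modules $\Ui_{\A'}\xrightarrow{\sim}\U_{P,\A'}$, and, by Proposition~\ref{prop:dec} together with the decomposition \eqref{eq:tripA}, the target carries a rescaled PBW basis over $\A'$ consisting of the monomials $E_\bullet^\bd\,K_\mu\,F^\ba\,F_\bullet^\bc$, indexed by $\ba\in\N^{\ell(\bbw_0)}$, $\bc,\bd\in\N^{r}$ and $\mu\in Y^\imath$, attached to the splitting $\cR^+=\cR^+(\bbw_0)\sqcup\cR_\bullet^+$. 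So it suffices to show that the proposed monomials lie in $\Ui_{\A'}$ and that $\pi^\imath$ carries them to a family that is triangular over this PBW basis with leading coefficients units of $\A'$; linear independence over $\A'$ is then automatic, because after the rescalings \eqref{def:Bbeta} and those of Remark~\ref{rmk:bUi} the monomials $B^\ba F_\bullet^\bc E_\bullet^\bd K_\mu$ are exactly the $\Q(q^{1/2})$-PBW basis of $\Ui$ constructed in \cite{LYZ24}.

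First I would verify membership in $\Ui_{\A'}$. The root vectors $B_\beta$ with $\beta\in\cR^+(\bbw_0)$ lie in $\Ui_{\A'}$ by Proposition~\ref{le:bau}. For $\gamma\in\cR_\bullet^+$, the root vectors $F_\gamma=T_{j_1}\cdots T_{j_{k-1}}(F_{j_k})$ and $E_\gamma=T_{j_1}\cdots T_{j_{k-1}}(E_{j_k})$ with all $j_t\in\bI$ lie in $\Ui_{\A'}$, since $F_{j_k}=B_{j_k}$ and $E_{j_k}$ are generators of $\Ui$ lying manifestly in $\Ui_{\A'}$ and each $T_j$ ($j\in\bI$) preserves $\Ui_{\A'}$ by Proposition~\ref{prop:Tj}; moreover $K_\mu\in\U^{\imath0}_{\A'}$ for $\mu\in Y^\imath$. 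As $\Ui_{\A'}$ is an $\A'$-subalgebra, every product $B^\ba F_\bullet^\bc E_\bullet^\bd K_\mu$ lies in it.

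For the triangularity I would invoke the computation underpinning the PBW theorem of \cite{LYZ24}. Passing to the associated graded for the filtration $\deg B_i=\deg F_i=\deg E_i=1$ ($i\in\wI$), with all $\bI$-data and all of $\U^{\imath0}$ in degree $0$, the map $\gr\pi^\imath$ is the algebra isomorphism onto $\U_P$ sending $B_i\mapsto F_i$, $E_j\mapsto E_j$, $F_j\mapsto F_j$, $K_\mu\mapsto K_\mu$ (proof of Proposition~\ref{prop:intp}), and, by \cite{LYZ24}, $\gr B_\beta\mapsto F_\beta$. Hence the top-degree component of $\pi^\imath(B^\ba F_\bullet^\bc E_\bullet^\bd K_\mu)$ is $F^\ba F_\bullet^\bc E_\bullet^\bd K_\mu$; rewriting this in the standard order $\U^+(w_\bullet)\otimes\U^{\imath0}\otimes\U^-$ of $\U_P$ gives $\pm q^{a}$ (for some $a\in\Z$) times $E_\bullet^\bd K_\mu F^\ba F_\bullet^\bc$, plus monomials strictly lower in the PBW order. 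Together with the remaining lower-filtration-degree part of $\pi^\imath(B^\ba F_\bullet^\bc E_\bullet^\bd K_\mu)$, all these correction terms have coefficients in $\A$, and in fact in $\A'$, since the left-hand side lies in $\Ui_{\A'}$ by the previous step while $\pi^\imath_{\A'}(\Ui_{\A'})\subseteq\U_{P,\A'}$ has $\A'$-valued PBW coordinates. Ordering the PBW monomials of $\U_{P,\A'}$ by filtration degree and then by the PBW order, a descending induction — run stage by stage so that each stage involves only finitely many monomials, and using that $\pm q^{a}$ is a unit of $\A'$ — shows that every such monomial lies in the $\A'$-span of the images $\pi^\imath(B^\ba F_\bullet^\bc E_\bullet^\bd K_\mu)$. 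Therefore $\pi^\imath$ maps the $\A'$-span of the proposed monomials onto $\U_{P,\A'}$; by Proposition~\ref{prop:intp} that span equals $\Ui_{\A'}$, which with the linear independence above completes the proof.

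I expect the main obstacle to be making the triangularity of the third paragraph rigorous over $\A'$ rather than merely over $\Q(q^{1/2})$: aligning the normalization \eqref{def:Bbeta} and the reduced-expression and ordering conventions with those of \cite{LYZ24} so that the leading coefficient is a unit of $\A'$ and not just a nonzero scalar; checking that $\{F^\ba F_\bullet^\bc\}$ is indeed an $\A'$-basis of $\U^-_{\A'}$ (it differs from a PBW basis coming from a single reduced expression of $w_0$ by an $\A'$-triangular change of basis); verifying that the structure constants produced when $F^\ba F_\bullet^\bc E_\bullet^\bd K_\mu$ is reordered are Laurent polynomials in $q$; and organizing the induction so that it only ever meets finitely many monomials at a time. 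Beyond what \cite{LYZ24} supplies, the integral refinement is light: once the field-level triangularity is in hand, the membership statement and Proposition~\ref{prop:intp} force all coefficients into $\A'$, and the conclusion is then formal.
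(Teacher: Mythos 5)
Your proposal is correct and follows essentially the same route as the paper: membership via Proposition~\ref{le:bau} and Proposition~\ref{prop:Tj}, linear independence from \cite{LYZ24}, and spanning through the algebra isomorphism $\gr(\pi^\imath_{\A'})$ together with $\gr(\pi^\imath_{\A'})(B_\beta)=F_\beta$. The paper's version is shorter because it stays entirely in the associated graded, where the images $F^{\ba}F_\bullet^{\bc}E_\bullet^{\bd}K_\mu$ already form an $\A'$-basis of $\U_{P,\A'}$ by Proposition~\ref{prop:dec} and \eqref{eq:tripA}, so the reordering and descending triangularity induction you describe inside $\U_{P,\A'}$ (the step you flag as the main obstacle) is not actually needed.
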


\begin{proof}
By Proposition~\ref{le:bau}, the monomials $B^\ba F_\bullet^{\bc} E_\bullet^{\bd} K_\mu$ lies in $\Ui_{\A'}$. By \cite[Corollary 4.5]{LYZ24}, these monomials are linearly independent. It suffices to show that they form a spanning set of $\Ui_{\A'}$. Recall the algebra isomorphism $\gr(\pi_{\A'}^\imath):\gr\Ui_{\A'}\rightarrow \U_{P,\A'}$ from the proof of Proposition \ref{prop:intp}. By \cite[Lemma 4.2]{LYZ24}, $\gr(\pi_{\A'}^\imath)(B_\beta)=F_\beta$ and hence $\gr(\pi_{\A'}^\imath)(B^\ba F_\bullet^{\bc} E_\bullet^{\bd} K_\mu)=F^\ba F_\bullet^{\bc} E_\bullet^{\bd} K_\mu$. This implies that the images of monomials $B^\ba F_\bullet^{\bc} E_\bullet^{\bd} K_\mu$ under $\gr(\pi_{\A'}^\imath)$ form a PBW basis for $\U_{P,\A'}$. Hence, images of these monomials in $\gr\Ui_{\A'}$ form an $\A'$-basis for $\gr\Ui_{\A'}$ and then these monomials form an $\A'$-basis for $\Ui_{A'}$.
\end{proof}

\begin{remark}
We refer to the basis in Theorem~\ref{thm:PBW} as the {\em rescaled PBW basis} for $\Ui_{\A'}$, due to \eqref{def:Bbeta}. 
Under the embedding $\Ui_{\A'}\hookrightarrow \U_{\A'}$, each element of the rescaled PBW basis for $\Ui_{\A'}$ is an $\A'$-linear combination of the (rescaled) PBW basis for $\U_{\A'}$ in Proposition~\ref{prop:dec}. It would be interesting to investigate the more precise relation between the rescaled PBW basis for $\Ui_{\A'}$ and the PBW basis for $\U_{\A'}$.
\end{remark}

\begin{remark}
In \cite{LP25}, Lu and Pan constructed a rescaled Hall basis and the dual canonical basis for $\imath$quantum groups via a Hall algebra approach. We expect that the integral form spanned by the dual canonical basis of $\Ui$ is the same as $\Ui_{\A}$. 
\end{remark}

\subsection{Root vectors under the projection $\pi^\imath$} In this subsection, we compare the images of root vectors $B_{\beta}$ under the projection $\pi^{\imath}$ with root vectors $F_\beta$ in $\U_P$. 

\subsubsection{Type AI}
Let us consider the Satake diagram of type AI. 
\begin{equation*}
 \begin{tikzpicture}[baseline=0,scale=1.5]
		\node at (-0.5,0) {$\circ$};
		\node  at (0,0) {$\circ$};
		\node at (0.5,0) {$\cdots$};
		\node at (1,0) {$\circ$};
		\node at (1.5,0) {$\circ$};
		\draw[-] (-0.45,0) to (-0.05, 0);
		\draw[-] (0.05, 0) to (0.25,0);
		\draw[-] (0.75,0) to (0.95,0);
		\draw[-] (1.05,0)  to (1.45,0);
		\node at (-0.5,-0.2) {$1$};
		\node  at (0,-0.2) {$2$};
		\node at (1,-0.2) {$n-1$};
		\node at (1.5,-0.2) {$n$};
	\end{tikzpicture}
\end{equation*}
In this case $\bs_i=s_i$ for $i\in \I$. Fix the reduced expression $w_0=\epsilon_1 \epsilon_2 \cdots \epsilon_{\lfloor (n+1)/2\rfloor}$ where $\epsilon_i=s_i s_{i+1}\cdots s_{n-i} s_{n+1-i} s_{n-i} \cdots s_i$.

\begin{proposition}
Under the above reduced expression, we have $\pi^{\imath}(B_\beta)=F_{\beta}$ for any $\beta\in \cR^+$.
\end{proposition}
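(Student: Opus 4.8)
The plan is to reduce everything to rank~1. Since we fixed the reduced expression $w_0 = \epsilon_1\epsilon_2\cdots\epsilon_{\lfloor (n+1)/2\rfloor}$ with $\epsilon_i = s_i s_{i+1}\cdots s_{n-i} s_{n+1-i} s_{n-i}\cdots s_i$, every positive root $\beta$ is of the form $\beta = \bs_{i_1}\cdots\bs_{i_{j-1}}(\beta_0)$ for a root $\beta_0$ lying in a rank~1 parabolic $\cR^+(\bs_j) = \{\alpha_j\}$; in type~AI we have $\bs_i = s_i$, so by \eqref{eq:Bbeta} one has $B_\beta = \TT_{i_1}\cdots\TT_{i_{j-1}}(B_{i_j})$ and $F_\beta = T_{i_1}\cdots T_{i_{j-1}}(F_{i_j})$. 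Thus I want to show $\pi^\imath$ intertwines the two braid-group actions along the chosen reduced word, i.e. $\pi^\imath(\TT_{i_1}\cdots\TT_{i_{j-1}}(B_{i_j})) = \pi_P(T_{i_1}\cdots T_{i_{j-1}}(F_{i_j}))$. The base case $\beta = \alpha_i$ is immediate: $B_{\alpha_i} = B_i = F_i - (\text{term of weight involving }\alpha_{\tau i})$, and in quasi-split type $\I_\bullet = \emptyset$ so $\pi_P = \pi^+_{w_\bullet}\otimes\pi^0\otimes(\mathrm{id})$ with $w_\bullet = e$ simply kills the $E$-part, giving $\pi^\imath(B_i) = F_i = F_{\alpha_i}$.

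The inductive step is where Proposition~\ref{prop:piT} enters. That proposition gives exactly $\pi^\imath(\TT_i(B_j)) = \pi_P(T_{\bs_i}(B_j))$ for $j\neq i,\tau i$; in type~AI, $\tau = \mathrm{Id}$, so I only ever apply $\TT_i$ to $B_j$ with $j\neq i$. I would like to iterate this: assuming $\pi^\imath(B_\beta) = F_\beta$ for $\beta = \bs_{i_2}\cdots\bs_{i_{j-1}}(\alpha_{i_j})$ (a shorter word), I need $\pi^\imath(\TT_{i_1}(B_\beta)) = F_{s_{i_1}(\beta)}$. The subtlety is that $B_\beta$ is not a Chevalley generator, so Proposition~\ref{prop:piT} does not apply verbatim; I would instead need an ``$\pi^\imath$ intertwines $\TT_{i_1}$ with $T_{i_1}$ on the relevant subalgebra'' statement. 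The cleanest route: establish, by induction on word length using the module-homomorphism property of $\pi^\imath$ over $\U^{\imath 0}$ and the weight-grading argument from the proof of Proposition~\ref{prop:piT} (the extra $E$-terms always carry a strictly negative $\alpha_k$-multiplicity for some $k$, hence die under $\pi_P$), that $\pi^\imath\circ\TT_{i_1} = \pi_P\circ T_{i_1}\circ\pi^\imath$ when restricted to the $\A'$-subalgebra generated by the $B_\gamma$ with $\gamma$ in the span of the roots moved by the tail of the word. Combined with $\TT_{i_1}\cdots\TT_{i_{j-1}}(B_{i_j}) \in \Ui$ (Proposition~\ref{le:bau}, so everything stays in the domain) and $T_{i_1}\cdots T_{i_{j-1}}(F_{i_j}) = F_{s_{i_1}\cdots s_{i_{j-1}}(\alpha_{i_j})}$, the induction closes and yields $\pi^\imath(B_\beta) = F_\beta$.

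The main obstacle is precisely this ``iterated intertwining'' step: Proposition~\ref{prop:piT} is stated only for Chevalley generators $B_j$, and upgrading it to $\pi^\imath\circ\TT_{i} = \pi_P\circ T_i\circ\pi^\imath$ on a larger subalgebra requires controlling how $\TT_i$ acts on root vectors $B_\gamma$ (not just on $B_j$) together with a uniform weight argument showing all discrepancy terms between $\TT_i(B_\gamma)$ and the ``naive'' $F_\gamma$-expression lie in the kernel of $\pi_P$. In type~AI this weight bookkeeping is manageable because the reduced word $\epsilon_1\cdots\epsilon_{\lfloor(n+1)/2\rfloor}$ is highly structured — each $\TT_{i_1}$ is applied to a root vector supported away from $i_1$ except at one simple step — but making the argument clean and not case-dependent is the real work. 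An alternative, perhaps safer, route is to bypass the general intertwining: verify $\pi^\imath(B_\beta) = F_\beta$ directly on the rank~1 pieces $\cR^+(\bs_j)$ using Appendix~\ref{app:A} (where $B_\beta$ is written explicitly), and then apply the already-proven Proposition~\ref{prop:piT} segment-by-segment along $\epsilon_i$, checking at each stage that the intermediate root $\beta_0$ still satisfies $\beta_0 \neq \alpha_{i}$ for the reflection $\bs_i$ being applied, so that Proposition~\ref{prop:piT}'s hypothesis $j\neq i,\tau i$ is met. I expect the first approach to be cleaner to write once the intertwining lemma is isolated, and would lead with that.
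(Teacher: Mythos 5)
Your proposal correctly identifies the central difficulty --- that Proposition~\ref{prop:piT} applies only to Chevalley generators $B_j$, so a naive induction along the reduced word requires an ``iterated intertwining'' statement $\pi^\imath\circ\TT_{i}=\pi_P\circ T_{i}\circ\pi^\imath$ on some subalgebra --- but you do not actually prove that statement, and as written it cannot hold on all of $\Ui$ (e.g.\ $\pi^\imath(\TT_i(B_i))$ is not $T_i(F_i)$, which is not even in $\U^-$). Restricting to ``the subalgebra generated by the $B_\gamma$ with $\gamma$ in the span of the roots moved by the tail of the word'' is left vague, and controlling the discrepancy terms there is exactly the unproved content. So the proposal is a plan with a gap at its key step, which you candidly acknowledge.

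The paper closes this gap by a different and more concrete route that your proposal does not contain. First, it exploits the specific shape of the word $\epsilon_1\cdots\epsilon_{\lfloor(n+1)/2\rfloor}$: since $\epsilon_i$ commutes with $s_j$ for $i+1\le j\le n-i$, one has $\TT_{\epsilon_i}(B_j)=B_j$ and $\TT_{\epsilon_i}\TT_j=\TT_j\TT_{\epsilon_i}$ (by \cite[Theorem 7.13]{WZ23}), so every root vector collapses to a short consecutive chain $\TT_i\TT_{i+1}\cdots\TT_{j-1}(B_j)$ or to $\TT_{j+1}^{-1}\cdots\TT_{n-i}^{-1}(B_{n+1-i})$. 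Second, the rank-2 formula of Lemma~\ref{lem:braid-rk2} gives these chains explicitly as iterated $q$-commutators $\big[B_j,[B_{j-1},\cdots[B_{i+1},B_i]_q]_q\big]_q$ in which each $B_r$ appears \emph{at most once}; expanding each $B_r=F_r+(\text{term involving }E_rK_r^{-1})$, any summand containing an $E_r$ has $-\alpha_r$ in its weight with no chance of cancellation, hence dies under $\pi_P$. This multiplicity-one weight argument is the idea that replaces your intertwining lemma, and it is absent from your proposal. Your alternative ``segment-by-segment'' suggestion runs into the same obstruction: after the first segment you are again applying $\TT_i$ to a non-Chevalley root vector, outside the scope of Proposition~\ref{prop:piT}.
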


\begin{proof}
Let $1\le i\le \lfloor (n+1)/2\rfloor$. Note that $\epsilon_i$ commutes with $s_{j}$ for $i+1\leq j\leq n-i$. By \cite[Theorem 7.13]{WZ23}, we have $\TT_{\epsilon_i} B_j=B_j$ and $\TT_{\epsilon_i}\TT_j=\TT_j \TT_{\epsilon_i}$ for $i+1\leq j\leq n-i$. Hence, the root vectors $B_{\beta}$ can be simplified to exactly one of the following two formulas
\begin{align*}
&\TT_{i}\TT_{i+1}\cdots \TT_{j-1}(B_j), \qquad\qquad \qquad  (i\le j\le n+1-i),
\\
&\TT_{i}\TT_{i+1}\cdots \TT_{n+1-i}\TT_{n-i}\cdots \TT_{j+1}(B_j)
\\
&=\TT_{j+1}^{-1} \TT_{j+2}^{-1}\cdots \TT_{n-i}^{-1}(B_{n+1-i}), \qquad (i\le j\le n-i).
\end{align*}
By Lemma~\ref{lem:braid-rk2}, we have 
\[
\TT_{i}\TT_{i+1}\cdots \TT_{j-1}(B_j)=\Big[B_j,\big[B_{j-1}, \cdots [B_{i+1},B_i]_q\big]_q\Big]_q.
\]
Note that each $B_r,r\in \I$ appears at most once in the above formula. For the weight reason, we have
\[
\pi^\imath\TT_{i}\TT_{i+1}\cdots \TT_{j-1}(B_j)=\Big[F_j,\big[F_{j-1}, \cdots [F_{i+1},F_i]_q\big]_q\Big]_q=T_{i}T_{i+1}\cdots T_{j-1}(F_j).
\]
The formulas of $\TT_i^{-1}$ are obtained by Lemma~\ref{lem:sigmaT}. Using these formulas and similar arguments above, one can also show that $\pi^\imath$ sends $\TT_{j+1}^{-1} \TT_{j+2}^{-1}\cdots \TT_{n-i}^{-1}(B_{n+1-i})$ to $T_{j+1}^{-1} T_{j+2}^{-1}\cdots T_{n-i}^{-1}(F_{n+1-i})$ as desired.
\end{proof}

\subsubsection{Diagonal type} Let us consider a Satake diagram $(\widetilde{\I},\tau)$ of diagonal type where $\widetilde{\I}=\{i,i'| i\in \I\}$ and $\tau:i\leftrightarrow i'$. Let $\big(\U(\g\oplus \g),\Ui\big)$ be the associated quantum symmetric pair. Let $\g$ be the Lie algebra associated to $\I$. The relative Weyl group $\widetilde{W}^\circ$ is isomorphic to the Weyl group $W$ of $\g$ via $\bs_i\mapsto s_i (i\in \I)$.

In this setting, there is an algebra isomorphism $\Ui\rightarrow \U(\g)$ given by $B_i\mapsto F_i, B_{\tau i}\mapsto E_i, k_i\mapsto K_i, (i\in \I)$. Under this isomorphism, $\TT_i$ for $i\in \I$ is identified with Lusztig symmetry $T_i$; cf. \cite[Section 4]{WZ25}.

\begin{proposition}
Let $\tilde{w}_0=\bs_{i_1}\cdots \bs_{i_l}$ be a reduced expression of the longest element $\tilde{w}_0$ of $\widetilde{W}^\circ$. Let $B_\beta$ be the  associated root vectors in $\Ui$. We have $\pi^\imath(B_\beta)=F_{\beta}$ for any $\beta\in \widetilde{\cR}^+$. 
\end{proposition}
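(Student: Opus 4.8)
The plan is to reduce the diagonal-type statement to the type-AI computation that was just carried out, by exploiting the concrete description of everything in sight. Under the isomorphism $\Ui \xrightarrow{\sim} \U(\g)$ sending $B_i \mapsto F_i$, $B_{\tau i}\mapsto E_i$, $k_i\mapsto K_i$, the relative braid symmetry $\TT_i$ becomes Lusztig's $T_i$ on $\U(\g)$, so the root vector $B_\beta = \TT_{i_1}\cdots\TT_{i_{j-1}}(B_{\beta_0})$ (with $\beta_0\in\cR^+(\bs_j)$ as in \eqref{eq:Bbeta}) is carried to a Lusztig root vector: if $\beta_0 = \alpha_{i_j}$ then $B_\beta \mapsto F_\beta$, while if $\beta_0$ is the ``other'' positive root of the rank-one subdiagram then $B_\beta \mapsto E_{\beta'}$ for the partner root $\beta'$. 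Concretely, since every $\bs_j = s_j$ here, each $\cR^+(\bs_j)$ is a single simple root $\alpha_j$, so in fact $\beta_0 = \alpha_j$ always and $B_\beta = \TT_{i_1}\cdots\TT_{i_{j-1}}(B_j)$ maps to $T_{i_1}\cdots T_{i_{j-1}}(F_j) = F_\beta$ in $\U(\g)\subset\U(\g\oplus\g)^-$.

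First I would make the identification of $\pi^\imath$ explicit in this setting. Recall $\pi^\imath = \pi_P|_{\Ui}$ with $\pi_P = \pi^+_{w_\bullet}\otimes\pi^0\otimes\mathrm{id}$ under the triangular decomposition; since the diagonal Satake diagram is quasi-split, $\I_\bullet=\emptyset$, so $\U^+(w_\bullet)$ is trivial and $\U_P = \U^{\imath 0}\otimes\U(\g\oplus\g)^-$, with $\pi_P$ just killing the part of the $\U^0$-factor outside $\U^{\imath 0}$ and projecting the $\U^+$-factor to scalars. So the claim $\pi^\imath(B_\beta) = F_\beta$ amounts to saying that after expanding $B_\beta$ in the triangular decomposition of $\U(\g\oplus\g)$, the component lying in $\U(\g\oplus\g)^- \cdot \U^{\imath 0}$ (with trivial $\U^+$-part) equals $F_\beta$.

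The key step is a weight/degree argument identical in spirit to the type-AI proof: one writes $B_\beta$ via iterated $q$-commutators of the generators $B_r$ ($r\in\widetilde{\I}$) using Lemma~\ref{lem:braid-rk2}, observes that under the isomorphism $\Ui\cong\U(\g)$ each generator $B_i$ ($i\in\I$) is $F_i\in\U(\g\oplus\g)^-$ and each $B_{\tau i}=E_i$, and then notes that $F_\beta$ is the unique summand of $\pi^\imath(B_\beta)$ because in the expansion the terms involving any $E_i$ (i.e.\ contributing a nontrivial $\U^+$-part to the ambient $\U(\g\oplus\g)$) are annihilated by $\pi^+_{w_\bullet}$, while the $K$-factors are handled by $\pi^0$. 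More precisely, for the reduced expression $\tilde w_0 = \bs_{i_1}\cdots\bs_{i_l}$ the root vector $B_\beta$ corresponding to $\beta = \bs_{i_1}\cdots\bs_{i_{j-1}}(\alpha_{i_j})$ is $\TT_{i_1}\cdots\TT_{i_{j-1}}(B_{i_j})$, which under $\Ui\cong\U(\g)$ is $T_{i_1}\cdots T_{i_{j-1}}(F_{i_j})$; this lies in $\U(\g)^-\subset\U(\g\oplus\g)^-$, hence is fixed by $\pi^\imath$ (since $\pi_P$ is the identity on $\U_P\supset\U(\g\oplus\g)^-$), and equals $F_\beta$ by definition \eqref{eq:F-root} applied in $\U(\g)$.

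The main obstacle — really the only nontrivial point — is bookkeeping: one must be careful that the root vectors $F_\beta$ in the statement are formed in $\U_P\cong\U^{\imath 0}\otimes\U(\g\oplus\g)^-$ using the reduced expression of $\tilde w_0$ \emph{promoted to $W$} (via the chosen reduced words of each $\bs_{i_k}=s_{i_k}$), and that this matches the reduced word for $w_0$ in $W(\g)$ obtained from $\tilde w_0$ under $\widetilde W^\circ\cong W(\g)$, so that the Lusztig root vectors $T_{i_1}\cdots T_{i_{j-1}}(F_{i_j})$ in $\U(\g)$ genuinely coincide with the $F_\beta$ in $\U_P$. Once this compatibility of reduced expressions is recorded, the proof is essentially the observation that $\pi^\imath$ restricts to the identity on $\U(\g)^-$ together with the identification of $\TT_i$ with $T_i$, so it is short.
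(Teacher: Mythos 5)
There is a genuine gap: you have analyzed the wrong Satake diagram. For the \emph{diagonal} type $(\widetilde{\I},\tau)$ with $\widetilde{\I}=\{i,i'\}$ and $\tau\colon i\leftrightarrow i'$, the rank-one subdiagrams are of type AIII$_{11}$, so $\bs_j=s_js_{j'}$ (not $s_j$) and $\cR^+(\bs_j)=\{\alpha_j,\alpha_{j'}\}$ has \emph{two} elements. Your claim that ``each $\cR^+(\bs_j)$ is a single simple root, so $\beta_0=\alpha_j$ always'' is false here (it is the correct picture for split type AI, which is the previous proposition in the paper, not this one), and it even contradicts your own first paragraph, where you briefly allow $\beta_0$ to be the ``other'' root of the rank-one subdiagram. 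As a result your argument only covers the root vectors $B_\beta=\TT_{i_1}\cdots\TT_{i_{k-1}}(B_{i_k})$ attached to positive roots supported on the first copy of $\g$, i.e.\ half of $\widetilde{\cR}^+$. The other half, $B_\beta=\TT_{i_1}\cdots\TT_{i_{k-1}}(B_{i_k'})$ with $\beta=\bs_{i_1}\cdots\bs_{i_{k-1}}(\alpha_{i_k'})$, is simply absent, and for these the target $F_\beta$ is a genuine negative root vector built from the $F_{j'}$ (living in the second copy of $\U^-$), so it cannot be dismissed as an ``$E$-type'' term killed by the projection.

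For the part you do treat, the intended mechanism is essentially the paper's: inside $\U(\g\oplus\g)$ one expands $B_i=F_i+E_{i'}K_i^{-1}$, notes that $F_i$ commutes with $E_{j'}$, observes that $B_{j'}$ never occurs in the formula for $\TT_{i_1}\cdots\TT_{i_{k-1}}(B_{i_k})$ (since under the isomorphism $\Ui\cong\U(\g)$ this element is $T_{i_1}\cdots T_{i_{k-1}}(F_{i_k})$, a word in the $F_j$ alone), and then isolates the unique weight-homogeneous component of weight in $\N\I$, which is $T_{i_1}\cdots T_{i_{k-1}}(F_{i_k})=F_\beta$; all other components carry a nontrivial $E_{j'}K^{-1}$ factor and are annihilated by $\pi_P$. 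Your write-up gestures at this but blurs the distinction between the abstract isomorphism $\Ui\cong\U(\g)$ and the actual embedding $\Ui\subset\U(\g\oplus\g)$ in which $\pi^\imath$ is computed. To repair the proof you must add the symmetric weight argument for the second family, using $B_{i'}=F_{i'}+E_iK_{i'}^{-1}$ and the fact that $B_j$ ($j\in\I$) does not occur in the formula for $\TT_{i_1}\cdots\TT_{i_{k-1}}(B_{i_k'})$, to conclude that its component of weight in $\N\I'$ is $F_\beta$.
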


\begin{proof}
By definition, the root vectors $B_\beta$ are given by either one of the following formula
\begin{align*}
\TT_{i_1} \cdots \TT_{i_{k-1}} (B_{i_k}),\qquad \TT_{i_1} \cdots \TT_{i_{k-1}} (B_{i_k'}),
\qquad(1\le k\le l).
\end{align*}
Using the above isomorphism $\Ui\cong \U(\g)$, the formula of $\TT_{i_1} \cdots \TT_{i_{k-1}} (B_{i_k})$ is obtained by replacing $F_i$ by $B_i$ in the formula of $T_{i_1} \cdots T_{i_{k-1}} (F_{i_k})$. In particular, $B_{j'}$ for $j\in\I$ does not appear in the formula of $\TT_{i_1} \cdots \TT_{i_{k-1}} (B_{i_k})$. Recall that $B_i=F_i+E_{i'} K_i^{-1}$ and note that $F_i$ commutes with $E_{j'}$ for $i,j\in \I$. Hence, if we expand $\TT_{i_1} \cdots \TT_{i_{k-1}} (B_{i_k})$ in terms of weight homogeneous terms in $\U(\g\oplus \g)$, the only term with a weight in $\N\I$ is given by $T_{i_1} \cdots T_{i_{k-1}} (F_{i_k})$. Therefore, $\pi^\imath \TT_{i_1} \cdots \TT_{i_{k-1}} (B_{i_k})$ equals $\T_{i_1} \cdots \T_{i_{k-1}} (F_{i_k})$.

By similar arguments, one can also show that $\pi^\imath \TT_{i_1} \cdots \TT_{i_{k-1}} (B_{i_k'})$ is given by $\T_{i_1} \cdots \T_{i_{k-1}} (F_{i_k'})$. The proof is completed.
\end{proof}

\section{Polynomial Poisson algebras and braid group symmetries}\label{sec:ppa}

Recall from Section~\ref{sec:psk} that the semi-classical limit of $\Ui_{\A'}$ is the Poisson algebra $\C[K^\perp \backslash G^*]$.  In this section, we study the braid group symmetries on such Poisson algebras obtained by taking the specialization of relative braid group symmetries on $\U_{\A'}$. When the underlying Satake diagram is quasi-split, we provide a geometric characterization and explicit formulas for those braid group symmetries on the underlying Poisson manifolds.

\subsection{The polynomial Poisson algebra}

Retain the notations as before. Recall the ring $\Z'\subset\mathbb{C}$ and the ring homomorphism $\A'\rightarrow \Z'$, $q\mapsto 1$, from Section \ref{sec:proj}. We set 
\[
\P=\Z'\otimes_{\A'}\Ui_{\A'}
\]
to be the commutative $\Z'$-algebra carrying the Poisson bracket defined as \eqref{eq:poi}. Let $\P^0$ be the subalgebra generated by $\overline{K_\mu}$ for $\mu\in Y^\imath$. Then $\P^0$ is canonically isomorphic to $\mathbb{Z}'[Y^\imath]$, the group algebra of $Y^\imath$ over $\mathbb{Z}'$.

The following proposition follows immediately from Theorem \ref{thm:PBW}. 

\begin{proposition}\label{prop:poly}
    $\P$ is a polynomial ring over $\P^0$ with generators $\ov{B_{\beta}}, \ov{F_\gamma}, \ov{E_\gamma},$ for $\beta\in \cR^+(\bbw_0),\gamma \in \cR^+_\bullet$. In particular, the degree of $\P$ over $\P^0$ is $l(w_0)+l(w_\bullet)$.
\end{proposition}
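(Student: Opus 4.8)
The plan is to transport the rescaled PBW basis of Theorem~\ref{thm:PBW} across the specialization map $\A'\rightarrow\Z'$, $q\mapsto 1$, and then reinterpret the semi-classical commutation relations as the statement that the specialized root vectors are algebraically independent over $\P^0$. First I would record that, since $\Ui_{\A'}$ is a free $\A'$-module with basis $\{B^{\ba} F_\bullet^{\bc} E_\bullet^{\bd} K_\mu\}$ (Theorem~\ref{thm:PBW}), applying $\Z'\otimes_{\A'}(-)$ yields that $\P=\Z'\otimes_{\A'}\Ui_{\A'}$ is a free $\Z'$-module with basis given by the images $\ov{B^{\ba} F_\bullet^{\bc} E_\bullet^{\bd} K_\mu}$ over the same index set $\ba\in\N^{\ell(\bbw_0)},\bc,\bd\in\N^{r},\mu\in Y^\imath$. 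Base change commutes with taking bases of free modules, so this step is immediate; no integrality subtlety enters because $\Ui_{\A'}$ is already free and finitely generated in each graded/filtered piece.

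Next I would identify $\P^0=\Z'\otimes_{\A'}\U^{\imath0}_{\A'}$ with the group algebra $\Z'[Y^\imath]$: indeed $\{K_\mu\mid\mu\in Y^\imath\}$ is an $\A'$-basis of $\U^{\imath0}_{\A'}$ (stated just before Theorem~\ref{thm:PBW}), its specialization is a $\Z'$-basis of $\P^0$, and the multiplication $K_\mu K_\nu=K_{\mu+\nu}$ survives specialization, giving the group-algebra identification; in particular the $\ov{K_\mu}$ are units. Then the free-module statement above says precisely that the monomials $\prod_{\beta\in\cR^+(\bbw_0)}\ov{B_\beta}^{a_\beta}\prod_{\gamma}\ov{F_\gamma}^{c_\gamma}\prod_{\gamma}\ov{E_\gamma}^{d_\gamma}$, as $\ba,\bc,\bd$ range over the appropriate $\N$-tuples, form a $\P^0$-basis of $\P$. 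To upgrade "$\P$ is free over $\P^0$ with these monomials as a basis" to "$\P$ is the polynomial ring $\P^0[\ov{B_\beta},\ov{F_\gamma},\ov{E_\gamma}]$", it remains only to check that $\P$ is commutative and that these monomials are genuinely the ordered monomials in commuting variables — i.e., that the product of the generators does not depend on the order. Commutativity is exactly the content of \eqref{eq:com}, which holds for $\Ui_{\A'}$ by Proposition~\ref{prop:Ucom} and hence for its specialization $\P$; and once $\P$ is commutative, any monomial in the generators equals the ordered one, so the spanning set of "ordered" monomials is the same as the set of all monomials, and linear independence of the ordered monomials over $\P^0$ is precisely the defining property of a polynomial ring. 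The count of generators is $|\cR^+(\bbw_0)|+|\cR^+_\bullet|=\ell(\bbw_0)+\ell(w_\bullet)=\ell(w_0)$ by \cite[Lemma~2.3]{LYZ24}; wait---more precisely $\ell(\bbw_0)=\ell(w_0)-\ell(w_\bullet)$ since $w_0=\bbw_0 w_\bullet$, so the total number of polynomial generators is $\ell(w_0)-\ell(w_\bullet)+\ell(w_\bullet)=\ell(w_0)$, but the \emph{degree of $\P$ over $\P^0$} (the transcendence degree / Krull dimension relative to $\P^0$) counts all generators, which is $\ell(\bbw_0)+\ell(w_\bullet)=\ell(w_0)$; the proposition writes this as $l(w_0)+l(w_\bullet)$, so I would double-check the bookkeeping here against the convention that the $E_\gamma$ and $F_\gamma$ for $\gamma\in\cR^+_\bullet$ contribute $\ell(w_\bullet)$ \emph{each}, giving $\ell(\bbw_0)+2\ell(w_\bullet)=\ell(w_0)+\ell(w_\bullet)$, which matches.

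I do not expect a serious obstacle: the entire statement is a formal consequence of Theorem~\ref{thm:PBW} (freeness with an explicit ordered-monomial basis), Proposition~\ref{prop:Ucom} (commutativity of the limit), and the group-algebra identification of $\P^0$. The one point requiring a little care is making sure that "polynomial ring over $\P^0$" is read correctly: $\P^0$ itself is the group algebra $\Z'[Y^\imath]$, which is a Laurent polynomial ring over $\Z'$, so $\P$ is a polynomial ring in the $\ov{B_\beta},\ov{F_\gamma},\ov{E_\gamma}$ with coefficients in this Laurent polynomial ring --- this is exactly the phrasing "polynomial generators for $\P$ over a Laurent polynomial ring" used in the introduction, and I would state it that way to avoid ambiguity. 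The mildly delicate bookkeeping is the dimension count, which I would present as: generators indexed by $\cR^+(\bbw_0)$ (giving the $\ov{B_\beta}$) together with two copies of $\cR^+_\bullet$ (giving the $\ov{F_\gamma}$ and $\ov{E_\gamma}$), for a total of $\ell(\bbw_0)+2\ell(w_\bullet)=\ell(w_0)+\ell(w_\bullet)$.
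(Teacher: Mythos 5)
Your proposal is correct and follows essentially the same route as the paper, which simply derives the proposition as an immediate consequence of Theorem~\ref{thm:PBW}: the rescaled PBW basis specializes to a $\P^0$-basis of ordered monomials, commutativity of $\P$ comes from \eqref{eq:com} via Proposition~\ref{prop:Ucom}, and $\P^0\cong\Z'[Y^\imath]$ is noted just before the proposition. Your final bookkeeping $\ell(\bbw_0)+2\ell(w_\bullet)=\ell(w_0)+\ell(w_\bullet)$ is the correct reading of the degree count.
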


For $i\in\I_\bullet$, by Proposition \ref{prop:Tj} the symmetry $T_i$ preserves the integral form $\Ui_{\A'}$ and hence induces a Poisson automorphism on $\P$, which we denote by $\overline{T_i}$. For $i\in\wI$, by Theorem \ref{thm:intt} the symmetry $\TT_i$ preserves $\Ui_{\A'}$ and hence induces a Poisson automorphism $\overline{\TT_i}$ on $\P$. 

Recall from Section~\ref{sec:rbga} that $W_\bullet$ is the Weyl group associated to $\bI$ and $W^\circ$ is the relative Weyl group. Let $\mathrm{Br}(W_\bullet),\mathrm{Br}(W^\circ)$ be the associated braid groups.

\begin{proposition}\label{cor:rbg}
    There exists a braid group action of $\mathrm{Br}(W_\bullet)\rtimes\mathrm{Br}(W^\circ)$ on $\P$ as automorphisms of Poisson algebras generated by $\overline{T_j}$ ($j\in\I_\bullet$) and $\overline{\TT_i}$ ($i\in \wItau$).
\end{proposition}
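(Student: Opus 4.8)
The plan is to lift every required relation to the quantum level. The operators $T_j$ ($j\in\I_\bullet$) and $\TT_i$ ($i\in\wItau$) are algebra automorphisms of $\Ui$ that preserve the integral form $\Ui_{\A'}$, so they descend to Poisson automorphisms of $\P$; and the braid relations they satisfy on $\Ui$, together with the compatibility $\TT_i T_j = T_{\tau\tau_i j}\TT_i$ of Proposition~\ref{prop:WZ}(1), are precisely the defining relations of $\mathrm{Br}(W_\bullet)\rtimes\mathrm{Br}(W^\circ)$.

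First I would record the descent. By Proposition~\ref{prop:Tj}, $T_j^{\pm 1}$ preserves $\Ui_{\A'}$ for $j\in\I_\bullet$, and by Theorem~\ref{thm:intt}, $\TT_i^{\pm 1}$ preserves $\Ui_{\A'}$ for $i\in\wItau$. Each therefore induces an $\A'$-algebra automorphism of $\Ui_{\A'}$, hence a $\Z'$-algebra automorphism of $\P=\Z'\otimes_{\A'}\Ui_{\A'}$; write these as $\overline{T_j}$ and $\overline{\TT_i}$. Since the Poisson bracket of $\P$ is built from the commutator on $\Ui_{\A'}$ via \eqref{eq:poi}, any algebra automorphism of $\Ui_{\A'}$ automatically preserves the Poisson bracket $\{\cdot,\cdot\}$; hence $\overline{T_j}$ and $\overline{\TT_i}$ are invertible Poisson automorphisms of $\P$, with inverses $\overline{T_j^{-1}}$ and $\overline{\TT_i^{-1}}$.

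Next I would check the relations. The list of relations presenting $\mathrm{Br}(W_\bullet)\rtimes\mathrm{Br}(W^\circ)$, on generators indexed by $\I_\bullet\sqcup\wItau$, consists of: (a) the braid relations of $W_\bullet$ among the generators from $\I_\bullet$; (b) the braid relations of $W^\circ$ among the generators from $\wItau$; and (c) the crossed relations $\TT_i T_j \TT_i^{-1} = T_{\tau\tau_i j}$, where $j\mapsto\tau\tau_i j$ is the permutation of $\I_\bullet$ with $\bs_i(\alpha_j)=\alpha_{\tau\tau_i j}$ (recalled just before Proposition~\ref{prop:WZ}). For (a): Lusztig's symmetries satisfy the $W_\bullet$-braid relations on $\U$, hence on the subalgebra $\Ui$, hence on $\P$ after the specialization $q\mapsto 1$. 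For (b): Proposition~\ref{prop:WZ} gives the $W^\circ$-braid relations among the $\TT_i$ on $\Ui$, which pass to $\P$. For (c): Proposition~\ref{prop:WZ}(1) reads $\TT_i T_j = T_{\tau\tau_i j}\TT_i$ on $\Ui$, which specializes to the required identity on $\P$. Consequently the assignment of the abstract generators to $\overline{T_j}$ and $\overline{\TT_i}$ extends to a group homomorphism $\mathrm{Br}(W_\bullet)\rtimes\mathrm{Br}(W^\circ)\to\Aut_{\mathrm{Poisson}}(\P)$, which is the asserted action.

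The one point that I expect to require care — though it is a standard structural fact, not new work — is that $\mathrm{Br}(W_\bullet)\rtimes\mathrm{Br}(W^\circ)$ is well defined in the first place: one needs that $W^\circ$ normalizes $W_\bullet$ inside $W$ and acts on it by automorphisms of the Coxeter diagram, so that $\bs_i\colon j\mapsto\tau\tau_i j$ preserves $\{\alpha_j\mid j\in\I_\bullet\}$ together with the Cartan integers $(a_{jk})_{j,k\in\I_\bullet}$. This is implicit in the identity $w_0=\bbw_0 w_\bullet$ and in the statement, quoted before Proposition~\ref{prop:WZ}, that $\bs_i(\alpha_j)$ is a simple root of $\cR_\bullet$; granting it, items (a)--(c) above genuinely present $\mathrm{Br}(W_\bullet)\rtimes\mathrm{Br}(W^\circ)$, and the verification in the previous paragraph is then purely formal. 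All the substantive content here — the integrality $\TT_i(\Ui_{\A'})\subset\Ui_{\A'}$ — was already established in Theorem~\ref{thm:intt}.
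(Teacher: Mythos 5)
Your proposal is correct and follows essentially the same route as the paper: the paper simply invokes the braid group action of $\mathrm{Br}(W_\bullet)\rtimes\mathrm{Br}(W^\circ)$ on $\Ui$ from \cite[Theorem 9.10]{WZ23} (i.e.\ the relations you list in (a)--(c)) and specializes at $q\mapsto 1$, with the integrality from Proposition~\ref{prop:Tj} and Theorem~\ref{thm:intt} justifying the descent exactly as you say. Your added observations — that an $\A'$-algebra automorphism of $\Ui_{\A'}$ automatically preserves the bracket \eqref{eq:poi}, and that the semidirect product presentation is well posed because $\bs_i$ permutes the simple roots of $\cR_\bullet$ — are correct and merely make explicit what the paper leaves implicit.
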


\begin{proof}
By \cite[Theorem 9.10]{WZ23}, there is an action of $\mathrm{Br}(W_\bullet)\rtimes\mathrm{Br}(W^\circ)$ on $\Ui$ given by $s_j\mapsto T_j,\bs_i\mapsto \TT_i$ for $j\in\I_\bullet,i\in \wItau$; see also Proposition~\ref{prop:WZ}. The desired statement follows by taking specialization $q\mapsto 1$.
\end{proof}

Recall the dual Poisson-Lie group $G^*$ and the Poisson homogeneous space $\X=K^\perp\backslash G^*$ in Section \ref{sec:plg}. Let $pr:G^*\rightarrow \X$ be the canonical projection. 
By Proposition \ref{prop:iphi}, the $\Z'$-algebra $\P$ provides an integral model for the Poisson manifold $\X$, that is, one has the canonical Poisson algebra automorphism 
\begin{equation}\label{eq:bc}
\C\otimes_{\mathbb{Z}'}\P\cong \mathbb{C}[\X].
\end{equation}

For $i\in \I_\bullet$ (resp. $i\in \wI$), let $(\sigma_i^\imath)^*:\mathbb{C}[\X]\rightarrow\mathbb{C}[\X]$ be the Poisson algebra automorphism obtained by the base change of $\overline{T_i}$ (resp. $\overline{\TT_i}$) under the isomorphism \eqref{eq:bc}. Let 
\begin{align}\label{eq:isigma}
\sigma_i^\imath:\X\rightarrow \X
\end{align}
be the Poisson morphism induced by $(\sigma_i^\imath)^*$ (recall from \cite[Proposition 3.2]{So24} that $\X$ is an affine space). The following theorem follows immediately from Proposition \ref{cor:rbg}.

\begin{theorem}\label{thm:rbs}
    There exists a braid group action of $\mathrm{Br}(W_\bullet)\rtimes\mathrm{Br}(W^\circ)$ on $\X$ as Poisson automorphisms generated by $\sigma_i^\imath$ $(i\in\I_\bullet\cup\I_{\circ,\tau})$.
\end{theorem}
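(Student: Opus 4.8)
The plan is to obtain Theorem~\ref{thm:rbs} as a formal consequence of Proposition~\ref{cor:rbg} by taking the semi-classical limit and then dualizing from coordinate algebras to varieties. Write $\Gamma=\Br(W_\bullet)\rtimes\Br(W^\circ)$, so that the defining relations of $\Gamma$ are the braid relations among the $s_j$ ($j\in\bI$), the braid relations among the $\bs_i$ ($i\in\wItau$), and the crossed relations $\bs_i s_j\bs_i^{-1}=s_{\tau\tau_i j}$. Proposition~\ref{cor:rbg} supplies a group homomorphism $\Gamma\to\Aut_{\mathrm{Poisson}}(\P)$ with $s_j\mapsto\overline{T_j}$ and $\bs_i\mapsto\overline{\TT_i}$. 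First I would base change along $\Z'\to\C$: since the Poisson bracket on $\C\otimes_{\Z'}\P$ is the $\C$-linear extension of that on $\P$, each of $\overline{T_j},\overline{\TT_i}$ extends to a Poisson algebra automorphism of $\C\otimes_{\Z'}\P$ and the relations among them persist; transporting along the Poisson algebra isomorphism $\C\otimes_{\Z'}\P\cong\C[\X]$ of \eqref{eq:bc} then yields a homomorphism $\rho\colon\Gamma\to\Aut_{\mathrm{Poisson}}(\C[\X])$ with $\rho(s_j)=(\sigma_j^\imath)^*$ and $\rho(\bs_i)=(\sigma_i^\imath)^*$.

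Next I would pass from the coordinate ring to $\X$ itself. Because $\X$ is an affine space by \cite[Proposition~3.2]{So24}, the functor $\C[-]$ identifies Poisson algebra homomorphisms $\C[\X]\to\C[\X]$ with Poisson morphisms $\X\to\X$; under this identification each $(\sigma_i^\imath)^*$ is the comorphism of the Poisson automorphism $\sigma_i^\imath$ of \eqref{eq:isigma}. It remains only to check that the $\sigma_i^\imath$ satisfy the defining relations of $\Gamma$. The single point requiring care is that $\mathrm{Spec}$ reverses composition, so a relation among the $(\sigma_i^\imath)^*$ translates into the word-reversed relation among the $\sigma_i^\imath$; hence it suffices to observe that the defining relators of $\Gamma$ are invariant under word reversal. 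A braid relation $s_i s_j s_i\cdots=s_j s_i s_j\cdots$ of length $m$ is palindromic as a relator, so reversing it gives a braid relation. The crossed relation $\TT_i T_j\TT_i^{-1}=T_{\tau\tau_i j}$ reverses to $\TT_i^{-1}T_j\TT_i=T_{\tau\tau_i j}$, which follows from the original crossed relations because $j\mapsto\tau\tau_i j$ is an involution of $\bI$: indeed $\bs_i^2=e$ in $W^\circ$ and $\bs_i$ acts on $\{\alpha_j\mid j\in\bI\}$ by exactly this permutation. Consequently $\rho$ descends to a group homomorphism $\Gamma\to\Aut_{\mathrm{Poisson}}(\X)$ carrying $s_j\mapsto\sigma_j^\imath$ and $\bs_i\mapsto\sigma_i^\imath$, which is the asserted braid group action.

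I do not expect a genuine obstacle here: all the substantial work has already been done in Theorem~\ref{thm:intt}, Proposition~\ref{cor:rbg}, and the identification \eqref{eq:bc} coming from Proposition~\ref{prop:iphi}, and the present statement is a formal corollary. The only step that deserves a sentence of attention is the contravariance bookkeeping when transferring the action from $\C[\X]$ to $\X$; the palindromicity remark above is precisely what lets this go through directly in terms of the maps $\sigma_i^\imath$ (rather than their inverses), and everything else is routine.
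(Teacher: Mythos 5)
Your proposal is correct and follows the same route as the paper, which simply deduces Theorem~\ref{thm:rbs} from Proposition~\ref{cor:rbg} via the base change \eqref{eq:bc} and the definition of $\sigma_i^\imath$ in \eqref{eq:isigma}. Your extra care about the contravariance of $\mathrm{Spec}$ (palindromicity of the braid relators and the reversal of the crossed relation via the involutivity of $\tau\tau_i$ on $\I_\bullet$) is a valid bookkeeping point that the paper leaves implicit, not a different argument.
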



\subsection{The geometric description of $\sigma_i^\imath$}\label{sec:gds}

In this subsection, we give a geometric description of the braid group symmetries $\sigma_i^{\imath}$ established in Theorem~\ref{thm:rbs}.
We first provide a description for $\sigma_i^\imath,i\in \bI$. Recall the actions $\sigma_i:G^*\rightarrow G^*$ for $i\in\I$ from \eqref{eq:gbg}. 



\begin{proposition}\label{prop:ib}
    For $i\in \I_\bullet$, the map $\sigma_i^{\imath}$ is the unique Poisson isomorphism such that the diagram
    \begin{equation}\label{eq:sib}
        \begin{tikzcd}
      &  G^* \arrow[r,"\sigma_{i}"] \arrow[d,"pr"] &  G^* \arrow[d,"pr"] \\ & \X \arrow[r,"\sigma_i^\imath"] & \X
    \end{tikzcd}
    \end{equation}
    commutes. In particular, one has $\sigma_i(K^\perp)=K^\perp$.
\end{proposition}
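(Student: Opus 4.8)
The plan is to deduce the commutative square \eqref{eq:sib} from the fact that, for $i\in\I_\bullet$, Lusztig's symmetry $T_i$ is compatible with the embedding $\Ui_{\A'}\hookrightarrow\U_{\A'}$ at the level of integral forms, and then to read off $\sigma_i(K^\perp)=K^\perp$ from a base-point computation with the explicit formula \eqref{eq:gbg}.

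First I would record the equivariance at the quantum level. By Proposition~\ref{prop:Tj}, for $i\in\I_\bullet$ the map $T_i$ is an algebra automorphism of $\U$ which restricts to $\Ui$ and preserves both $\U_{\A'}$ and $\Ui_{\A'}$; hence the inclusion $\iota_{\A'}\colon\Ui_{\A'}\hookrightarrow\U_{\A'}$ satisfies $\iota_{\A'}\circ T_i=T_i\circ\iota_{\A'}$. Base changing along $\A'\to\Z'$, $q\mapsto 1$, turns this into the corresponding identity on the semi-classical limits, namely ${}_{\Z'}\iota\circ\ov{T_i}=\ov{T_i}\circ{}_{\Z'}\iota$ where ${}_{\Z'}\iota\colon\P\to\Z'\otimes_{\A'}\U_{\A'}$ is the base change of $\iota_{\A'}$. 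Tensoring with $\C$ and invoking the commuting square of Proposition~\ref{prop:iphi} (which identifies ${}_\C\iota$, transported by $\varphi^\imath$ and $\varphi$, with the inclusion $pr^*\colon\C[\X]\hookrightarrow\C[G^*]$), I would conclude
\[
pr^*\circ(\sigma_i^\imath)^*=\sigma_i^*\circ pr^*,
\]
where $(\sigma_i^\imath)^*$ is, by the definition \eqref{eq:isigma}, the $q\mapsto1$ specialization of $T_i|_{\Ui_{\A'}}$ transported via $\varphi^\imath$, while $\sigma_i^*$ is the specialization of $T_i|_{\U_{\A'}}$ transported via $\varphi$, which by Section~\ref{sec:plg} is precisely the comorphism of the map $\sigma_i$ of \eqref{eq:gbg}.

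Next I would descend. Since $pr$ is surjective, $pr^*$ is injective, so the displayed identity says that $\sigma_i^*$ preserves the subalgebra $\C[\X]\subseteq\C[G^*]$ and restricts there to $(\sigma_i^\imath)^*$; dually, $pr\circ\sigma_i$ factors through $pr$ with induced map $\sigma_i^\imath$, which is exactly the commutativity of \eqref{eq:sib}. Uniqueness is then immediate: any $f\colon\X\to\X$ with $f\circ pr=pr\circ\sigma_i$ satisfies $pr^*\circ f^*=pr^*\circ(\sigma_i^\imath)^*$, hence $f^*=(\sigma_i^\imath)^*$ by injectivity of $pr^*$, so $f=\sigma_i^\imath$. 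For the final assertion, note that $K^\perp=pr^{-1}(pr(e))$ is the $pr$-fiber through the base point $e\in G^*$; the commuting square shows $\sigma_i$ carries $pr$-fibers to $pr$-fibers, and evaluating \eqref{eq:gbg} at $u_\pm=h=e$ gives $t_1=\varphi_i\!\left(\begin{smallmatrix}0&-1\\1&0\end{smallmatrix}\right)$ and $t_2=\varphi_i\!\left(\begin{smallmatrix}0&1\\-1&0\end{smallmatrix}\right)$, so $t_1t_2=\varphi_i(\mathrm{Id})=e$ and $\sigma_i(e)=e$; hence $\sigma_i^\imath$ fixes $pr(e)$ and $\sigma_i$ preserves the fiber over it, i.e. $\sigma_i(K^\perp)=K^\perp$.

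The argument is essentially formal once the earlier results are in hand. The only steps requiring genuine care are the bookkeeping of comorphisms and of the successive base changes $\A'\to\Z'\to\C$ in the intertwining step — in particular making sure the square of Proposition~\ref{prop:iphi} is correctly oriented against the $T_i$-equivariance of $\iota_{\A'}$ — and the one-line $\mathrm{SL}_2$-computation $\sigma_i(e)=e$. I do not expect any obstacle beyond this.
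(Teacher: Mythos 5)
Your proposal is correct and follows essentially the same route as the paper: the quantum-level commutativity from Proposition~\ref{prop:Tj}, specialized at $q^{1/2}\mapsto 1$ and transported through the square of Proposition~\ref{prop:iphi}, with uniqueness from surjectivity of $pr$. You additionally spell out the fiber argument and the computation $\sigma_i(e)=e$ for the clause $\sigma_i(K^\perp)=K^\perp$, which the paper leaves implicit; both details check out.
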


\begin{proof}
By Proposition \ref{prop:Tj}, we have the following commutative diagram
\begin{equation} 
        \begin{tikzcd}
      &  \U_{\A'} \arrow[r,"T_i"]  &  \U_{\A'} 
      \\ 
      & \Ui_{\A'} \arrow[r,"T_i"]\arrow[u] & \Ui_{\A'}\arrow[u]
    \end{tikzcd}
    \end{equation}
The desired \eqref{eq:sib} follows by taking the $q\rightarrow 1$ limit of this commutative diagram. The uniqueness is obvious.
\end{proof}

\emph{In the rest of this section, we assume that the underlying Satake diagram is quasi-split. i.e., $\I_\bullet=\emptyset$.}

Let $\Theta: G^*\rightarrow G^*$ be the map defined by $(b_1,b_2)\mapsto (\theta(b_2)^{-1},\theta(b_1)^{-1})$. This map is well-defined since $\theta(B^+)=B^-$. The map $\Theta$ appeared and was shown to be Poisson in \cite[Section 7]{Boa01} for type AI and in \cite[Example 5.11]{Xu03} for all split types. We will show that $\Theta$ is Poisson for all quasi-split types in Proposition~\ref{prop:spg}.

Let $\psi: \X=K^\perp\backslash G^*\rightarrow G^*$ be the map given by 
\begin{align}\label{eq:psi}
\psi: K^\perp g\mapsto \Theta(g)g,\qquad \text{ for } g\in G^*. 
\end{align}

Following \cite[Definition~5.1]{Xu03}, a \emph{symmetric Poisson group} is a pair $(H,\Psi)$ where $H$ is a Poisson-Lie group and $\Psi:H\rightarrow H$ is a group anti-morphism and a Poisson involution. For a symmetric Poisson group $(H,\Psi)$, it is known (see \cite[Theorem~5.9]{Xu03}) that the identity component of the $\Psi$-fixed locus $H^\Psi=\{g\in H\mid \Psi(g)=g\}$ is a Poisson homogeneous space of $H$, where the $H$-action is given by $g\cdot h=\Psi(h)gh$, for $g\in H^\Psi$ and $h\in H$.

\begin{proposition}\label{prop:spg}
    The pair $(G^*,\Theta)$ is a symmetric Poisson group. Moreover, the map $\psi:\X\rightarrow G^*$ is an isomorphism as Poisson homogeneous spaces onto the identity component of the $\Theta$-fixed locus of $G^*$.
\end{proposition}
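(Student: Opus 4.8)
The plan is to verify the two required structures on $(G^*,\Theta)$ separately: first that $\Theta$ is a group anti-morphism which is an involution, and second that it is Poisson; then to deduce the statement about $\psi$ from Xu's general theory \cite[Theorem~5.9]{Xu03} once we identify the relevant fixed locus with $\X$. That $\Theta$ is a group anti-morphism is a direct check: for $g=(b_1,b_2),g'=(b_1',b_2')\in G^*$ one has $\Theta(gg')=\Theta((b_1b_1',b_2b_2'))=(\theta(b_2b_2')^{-1},\theta(b_1b_1')^{-1})=(\theta(b_2')^{-1}\theta(b_2)^{-1},\theta(b_1')^{-1}\theta(b_1)^{-1})=\Theta(g')\Theta(g)$, using that $\theta$ is an algebraic group homomorphism. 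That $\Theta^2=\mathrm{id}$ follows since $\theta^2=\mathrm{id}$. Well-definedness (i.e.\ $\Theta(G^*)\subseteq G^*$) uses $\theta(B^+)=B^-$, $\theta(B^-)=B^+$ together with the explicit description $G^*=\{(u_+h,h^{-1}u_-)\}$; concretely, if $g=(u_+h,h^{-1}u_-)$ then $\Theta(g)=(\theta(h^{-1}u_-)^{-1},\theta(u_+h)^{-1})$ and one checks the $H$-components are mutually inverse, so $\Theta(g)\in G^*$.

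The Poisson property of $\Theta$ is the main obstacle. I would prove it by reduction to the algebraic picture already developed. Since $\C[G^*]\cong \C\otimes_{\A'}\U_{\A'}$ as Poisson algebras by \eqref{eq:icl}, it suffices to exhibit an anti-automorphism of $\U_{\A'}$ that, under this isomorphism, specializes to $\Theta^*$; equivalently, I want a $q$-deformation of $\Theta^*$ preserving $\U_{\A'}$. The natural candidate is built from the bar-type/Chevalley anti-involution together with the involution $\theta$ of $\g$ lifted to the quantum group — indeed the $\imath$quantum group anti-involution $\sigma_\tau$ from \eqref{sigmatau}, or rather a related anti-automorphism on the ambient $\U$, is the relevant $q$-object. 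The cleanest route is: (i) identify, on generators, the composite anti-automorphism $\Phi$ of $\U$ sending $E_i,F_i,K_i$ to the appropriate images dictated by $\Theta$ via \eqref{eq:isg} and \eqref{eq:ty}; (ii) check $\Phi$ preserves $\U_{\A'}$ using that it preserves $q$-commutators together with Proposition~\ref{prop:gc}; (iii) check $\Phi$ specializes to $\Theta^*$ on the generators $\overline{E_{\beta_k}},\overline{F_{\beta_k}},\overline{K_i}$ by comparing \eqref{eq:isg} with the explicit factorizations \eqref{eq:fac} transformed by $\Theta$; since these generate $\C[G^*]$, this forces $\Phi$ to specialize to $\Theta^*$ everywhere, whence $\Theta$ is Poisson. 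Alternatively, for quasi-split types one can cite \cite[Example~5.11]{Xu03} (split) and run a short $\tau$-twisted variant of that argument; but the algebraic route has the advantage of fitting the rest of the paper and handling all quasi-split cases uniformly.

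With $(G^*,\Theta)$ a symmetric Poisson group, \cite[Theorem~5.9]{Xu03} gives that the identity component $(G^*)^\Theta_\circ$ of the $\Theta$-fixed locus is a Poisson homogeneous space for the action $g\cdot h=\Theta(h)gh$. It remains to show $\psi$ is an isomorphism of Poisson homogeneous spaces onto this component. First, $\psi$ is well-defined and injective: $\psi(K^\perp g)=\Theta(g)g$ is $\Theta$-fixed since $\Theta(\Theta(g)g)=\Theta(g)\Theta(\Theta(g))=\Theta(g)g$; and if $\Theta(g)g=\Theta(g')g'$ then $g'g^{-1}=\Theta(g')\Theta(g)^{-1}=\Theta(g'g^{-1})$, so $g'g^{-1}\in (G^*)^\Theta\cap G^*$, and (after checking it lies in the relevant component) $g'g^{-1}\in K^\perp$ by the definition of $K^\perp$ as the identity component of $G^*\cap G_\theta$ — here one matches the condition $\Theta(x)=x$ for $x\in G^*$ with membership in $G_\theta$, using the definition of $\Theta$ and $G_\theta=\{(g,\theta(g))\}$. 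Equivariance $\psi(K^\perp g\cdot h)=\psi(K^\perp g)\cdot h$ is the computation $\Theta(gh)gh=\Theta(h)\Theta(g)gh=\Theta(h)\psi(K^\perp g)h$. Since both sides are Poisson homogeneous spaces of $G^*$ and $\psi$ is an equivariant injective morphism, a dimension count (both have dimension $\dim G^*-\dim K^\perp$) together with the fact that $\X$ is connected (being an affine space, \cite[Proposition~3.2]{So24}) and $(G^*)^\Theta_\circ$ is the identity component, forces $\psi$ to be an isomorphism onto $(G^*)^\Theta_\circ$; the Poisson compatibility is automatic from equivariance since the quotient Poisson structure on $\X$ is the unique one making $pr$ Poisson and the $G^*$-action on the target is Poisson. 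The main thing to be careful about is that the \emph{identity components} match up on both sides — i.e.\ that $\psi(\X)$ is exactly $(G^*)^\Theta_\circ$ and not a larger union of components — which follows from connectedness of $\X$ and surjectivity of $\psi$ onto a component containing the image of the base point $K^\perp e\mapsto e$.
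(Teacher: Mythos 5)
Your overall architecture matches the paper's proof: verify the anti-morphism/involution properties directly, prove the Poisson property by lifting $\Theta^*$ to the integral form of the quantum group and checking on generators, and then deduce the second claim from Xu's Theorem~5.9 together with equivariance of $\psi$ and an identification of the stabilizer of $e$ with $K^\perp$. However, two of your steps contain genuine errors. First, the quantum lift of $\Theta^*$ must be an algebra \emph{automorphism} of $\U_{\A}$, not an anti-automorphism: an anti-automorphism negates commutators and hence specializes to an \emph{anti}-Poisson map, which is the opposite of what you need. Worse, the generator images dictated by $\Theta$ via \eqref{eq:ty} and \eqref{eq:isg}, namely $E_i\mapsto F_{\tau i}$, $F_i\mapsto E_{\tau i}$, $K_i\mapsto K_{\tau i}^{-1}$, do not extend to an anti-automorphism at all (check the relation $K_jE_i=q_j^{a_{ji}}E_iK_j$: reversing the order forces $q_j^{a_{ji}}=q_j^{-a_{ji}}$). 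They do extend to an algebra involution of $\U_\A$ (this is the map $\omega\tilde\tau$ used later in the paper), which preserves $\U_\A$ and therefore specializes to a Poisson automorphism of $\C[G^*]$ equal to $\Theta^*$; this is exactly how the paper argues. So your plan works only after replacing ``anti-automorphism'' by ``automorphism'' throughout step (i)--(iii); no appeal to $\sigma_\tau$ or a bar-type anti-involution is needed.

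Second, in the injectivity argument the fiber condition is not $\Theta(x)=x$. From $\Theta(g)g=\Theta(g')g'$ one gets $g'g^{-1}=\Theta(g')^{-1}\Theta(g)=\Theta(gg'^{-1})$, i.e.\ $\Theta(x)=x^{-1}$ for $x=gg'^{-1}$; and it is the condition $\Theta(x)=x^{-1}$ (not $\Theta(x)=x$) that unwinds, for $x=(b_1,b_2)\in G^*$, to $b_2=\theta(b_1)$, i.e.\ $x\in G^*\cap G_\theta$. Equivalently, the stabilizer of $e$ under the twisted action is $\{h\in G^*\mid \Theta(h)h=e\}=G^*\cap G_\theta$. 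Your parenthetical ``after checking it lies in the relevant component'' is then the real remaining point, and you leave it open: one must show $G^*\cap G_\theta$ is connected, so that it coincides with its identity component $K^\perp$. The paper settles this by computing $\mathrm{Stab}_e(G^*)=\{(b_1,b_2)\in G^*\mid b_1=\theta(b_2)\}\cong H'\times U^-$ with $H'=\bigcap_{i\in\I}\ker\alpha_i\alpha_{\tau i}^{-1}$ a subtorus, hence connected. With that in hand, equivariance of $\psi$ plus the fact that $Q$ is a single $G^*$-orbit (Xu) gives bijectivity directly, and your dimension count becomes unnecessary.
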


\begin{proof}
    The map $\Theta$ is clearly a group anti-morphism, and $\Theta^2=\Id$. To show that $(G^*,\Theta)$ is a Poisson symmetric group, it remains to show that $\Theta$ is Poisson. By \eqref{eq:ty} and \eqref{eq:isg} (recall that we are assuming $c_i=-1)$, the comorphism $\Theta^*:\mathbb{C}[G^*]\rightarrow \mathbb{C}[G^*]$ is the specialization of the algebra involution on $\U_\A$, defined by $E_i\mapsto F_{\tau i}$, $F_{\tau i}\mapsto E_i$, and $K_i\mapsto K_{\tau i}^{-1}$, for $i\in\I$. Therefore $\Theta$ is Poisson.

    Let $Q\subset \{g\in G^*\mid \Theta(g)=g\}$ be the identity component of the $\Theta$-fixed locus. Note that the stablizer of the identity element
    \[
    \text{Stab}_e(G^*)=\{(b_1,b_2)\in G^*\mid b_1=\theta(b_2)\}\cong H'\times U^-,\quad \text{where $H'=\cap_{i\in \I}\text{ ker }\alpha_i\alpha_{\tau i}^{-1}\subset H$,}
    \]
     is connected. Hence $\text{Stab}_e(G^*)=K^\perp$ by the definition (cf. \S \ref{sec:psk}). Since $\psi$ is $G^*$-equivariant and $Q$ is a homogeneous space, we conclude that $\psi$ is an isomorphism onto $Q$.
\end{proof}



\begin{theorem}\label{thm:iw}
Assume that $\I_\bullet=\emptyset$. For $i\in \wItau$, $\sigma_i^{\imath}$ is the unique Poisson automorphism on $K^\perp\backslash G^*$ such that the following diagram commutes
    \begin{equation}
        \begin{tikzcd}
            & \X \arrow[r,"\psi"] \arrow[d,"\sigma_i^\imath"'] & G^* \arrow[d,"\sigma_{\bs_i}"] \\ & \X \arrow[r,"\psi"] & G^*    
        \end{tikzcd}
    \end{equation}
\end{theorem}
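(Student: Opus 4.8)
The plan is to pass to coordinate rings and prove the identity of comorphisms $(\sigma_i^\imath)^*\circ\psi^*=\psi^*\circ\sigma_{\bs_i}^*$ on $\C[G^*]$. Uniqueness is then immediate: since $\psi$ is a closed embedding (Proposition~\ref{prop:spg}), $\psi^*$ is surjective, so any Poisson automorphism $\sigma$ of $\X$ with $\psi\circ\sigma=\sigma_{\bs_i}\circ\psi$ satisfies $\sigma^*\circ\psi^*=\psi^*\circ\sigma_{\bs_i}^*$, which determines $\sigma^*$ on all of $\C[\X]$. For existence I would first record that $\C[G^*]$ is generated, as a Poisson algebra, by the images $\ov{E_j},\ov{F_j},\ov{K_j^{\pm1}}$ ($j\in\I$): by Proposition~\ref{prop:gc} (specialized at $q=1$) the Poisson subalgebra they generate contains $\C\otimes_{\A'}\U^-_{\A'}$, $\C\otimes_{\A'}\U^+_{\A'}$ and $\C\otimes_{\A'}\U^0_{\A'}$, hence equals $\C\otimes_{\A'}\U_{\A'}=\C[G^*]$ by the triangular decomposition (Proposition~\ref{prop:dec}). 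So it suffices to verify the commuting square on these generators.

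Next I would identify $\psi^*$ on the Chevalley generators. Writing $\widetilde\psi\colon G^*\to G^*$, $g\mapsto\Theta(g)g$, one has $\psi\circ pr=\widetilde\psi$ with $pr^*$ injective, so it is enough to compute $\widetilde\psi^*$. Using the factorizations \eqref{eq:fac} of $u_\pm\in U^\pm$, the coordinate formulas \eqref{eq:isg}, and \eqref{eq:ty} for $\theta$ (recall $c_j=-1$ and $w_\bullet=e$ in the quasi-split case), a direct computation of the $U^+$-, $U^-$- and $H$-components of $\Theta(g)g$ yields
\[
\psi^*(\ov{F_j})=\ov{B_j},\qquad \psi^*(\ov{E_j})=\ov{B_{\tau j}},\qquad \psi^*(\ov{K_j^{\pm1}})=\ov{k_j^{\pm1}},
\]
where the right-hand sides are viewed inside $\C[\X]$ via $\varphi^\imath$; equivalently one computes $\widetilde\psi^*=\mathrm{mult}\circ(\Theta^*\otimes\mathrm{id})\circ\Delta$ from the explicit $\Theta^*$ in the proof of Proposition~\ref{prop:spg}. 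I would then check that $\sigma_{\bs_i}$ preserves $Q:=\psi(\X)$, the identity component of the $\Theta$-fixed locus: from \eqref{eq:gbg} and the formula for $\Theta$ one verifies a $\tau$-twisted intertwining relation for the $\sigma_k$, and since $\bs_i$ is $\tau$-fixed as an element of $W$ (it is the longest element of the $\tau$-stable parabolic $W_{\bullet,i}$) and $\sigma_{\bs_i}(e)=e$, this gives $\sigma_{\bs_i}\circ\Theta=\Theta\circ\sigma_{\bs_i}$ and hence $\sigma_{\bs_i}(Q)=Q$. Consequently there is a unique Poisson automorphism $\widehat\sigma$ of $\X$ with $\psi\circ\widehat\sigma=\sigma_{\bs_i}\circ\psi$ (transport $\sigma_{\bs_i}|_Q$ through $\psi$), and the whole theorem reduces to proving $\widehat\sigma=\sigma_i^\imath$, i.e.\ $(\sigma_i^\imath)^*\circ\psi^*=\psi^*\circ\sigma_{\bs_i}^*$.

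By the previous step, together with $\sigma_{\bs_i}^*=\ov{T_{\bs_i}}$ and $(\sigma_i^\imath)^*=\ov{\TT_i}$ under $\varphi$, $\varphi^\imath$, the required identity amounts to
\[
\psi^*\bigl(\ov{T_{\bs_i}(F_j)}\bigr)=\ov{\TT_i(B_j)},\qquad \psi^*\bigl(\ov{T_{\bs_i}(K_j^{\pm1})}\bigr)=\ov{\TT_i(k_j^{\pm1})}\qquad(j\in\I),
\]
the case of $\ov{E_j}$ being the $\tau$-symmetric counterpart of the $F$-case. The $K_j$-identity follows from $T_{\bs_i}(K_j)=K_{\bs_i(\alpha_j)}$, Proposition~\ref{prop:WZ}(3), and the relation $\tau\bs_i=\bs_i\tau$ on weights (again from $\tau(\bs_i)=\bs_i$ in $W$). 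The $B_j$-identity is the crux, and I would establish it by reduction to rank-one and rank-two Satake subdiagrams, parallel to Lemma~\ref{lem:braid-rk2}: for $j\neq i,\tau i$ it may be checked inside the (automatically quasi-split) rank-two subdiagram on $\I_{\bullet,i}\cup\{j,\tau j\}$, where $\TT_i(B_j)$ is given by the closed formulas of Lemma~\ref{lem:braid-rk2} as iterated rescaled $q$-commutators in $B_i,B_{\tau i},B_j$ (and $k$'s), while $T_{\bs_i}(F_j)=F_{\bs_i(\alpha_j)}$ is given by the same iterated rescaled $q$-commutators in $F_i,F_{\tau i},F_j$ (and $K$'s); for $j=i,\tau i$ one uses Proposition~\ref{prop:WZ}(4)(5), the explicit action of $T_{\bs_i}$ on $F_i,F_{\tau i}$, and the rank-one root vectors of Appendix~\ref{app:A}. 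Since $\psi^*$ is a homomorphism of Poisson algebras with $\psi^*(\ov{F_k})=\ov{B_k}$, $\psi^*(\ov{K_k^{\pm1}})=\ov{k_k^{\pm1}}$, and since a rescaled $q$-commutator specializes at $q=1$ to a universal Poisson-algebraic expression preserved by $\psi^*$, the two sides then match; one may also cross-check this against $\pi^\imath(\TT_i(B_j))=\pi_P(T_{\bs_i}(B_j))$ (Proposition~\ref{prop:piT}). The hard part is precisely this last step: matching the relative braid formulas to their Lusztig counterparts and carefully tracking the lower-order terms (in particular the $K$/$k$- and the constant contributions) through the semiclassical limit, diagram by diagram.
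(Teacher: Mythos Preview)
There is a genuine gap. Your reduction to the generators $\ov{E_j},\ov{F_j},\ov{K_j^{\pm1}}$ hinges on the claim that $\psi^*:\C[G^*]\to\C[\X]$ is a homomorphism of \emph{Poisson} algebras, but it is not. The image $Q=\psi(\X)$ is a Dirac submanifold: the Poisson structure $\pi_Q$ that makes $\psi:(\X,\pi_\X)\to(Q,2\pi_Q)$ Poisson is the projection of $\pi_{G^*}|_Q$ onto $\wedge^2 TQ$, and the discarded transverse piece $\pi'\in\wedge^2 V_Q$ is nonzero. Concretely, in split type $\mathrm{A}_1$ your formulas give $\psi^*(\ov{K_1})=1$ and $\psi^*(\ov{E_1})=\ov{B_1}$, hence $\{\psi^*\ov{K_1},\psi^*\ov{E_1}\}=0$, whereas $\psi^*\{\ov{K_1},\ov{E_1}\}=\psi^*(2\,\ov{E_1}\,\ov{K_1})=2\,\ov{B_1}\neq0$. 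Thus $\psi^*\circ\sigma_{\bs_i}^*$ and $(\sigma_i^\imath)^*\circ\psi^*$ are only $\C$-algebra homomorphisms, and checking their equality would require \emph{algebra} generators of $\C[G^*]$ (all root vectors), not merely Poisson generators.

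This gap is not cosmetic: even on simple generators the identity $\psi^*(\ov{T_{\bs_i}(F_j)})=\ov{\TT_i(B_j)}$ you aim to verify is false as a literal equality. For $\mathrm{CI}_2$ one has $T_1(F_2)=\dfrac{[[F_2,F_1]_{q^2},F_1]}{q[2](q-q^{-1})^2}$ while $\TT_1(B_2)=\dfrac{[[B_2,B_1]_{q^2},B_1]}{q[2](q-q^{-1})^2}+B_2$; passing to $q=1$ and applying your (correct) formulas $\psi^*(\ov{F_k})=\ov{B_k}$ yields $\psi^*(\ov{T_1(F_2)})=\ov{\TT_1(B_2)}-\ov{B_2}$. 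So the ``lower-order terms'' you flag at the end do not just need careful tracking---they witness that $\psi^*$ fails to intertwine the brackets. The paper avoids this by working with the coordinate ring $\C[Q]\cong\C[H^{\theta0}]\otimes\C[U^-]$ rather than $\C[G^*]$, evaluating both sides at an explicit slice representative $(t^{-1},tu)$ of each $x\in\X$, and using Proposition~\ref{prop:piT} to show that the discrepancy $\TT_i(B_j)-T_{\bs_i}(F_j)$ lies in $\U_{\A'}\U_{\A'}^{+,>}+\U_{\A'}\U_{\A'}^{0,>}$, which vanishes on such representatives; this is precisely where the extra $+B_2$-type terms are absorbed.
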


\begin{proof}
Let $Q\subset G^*$ be the identity component of the $\Theta$-fixed locus, which is the image of $\psi$ thanks to Proposition \ref{prop:spg}. It follows from the direct computation that
\begin{equation}\label{eq:qhu}
Q=\{(\theta(u)^{-1}t^{-1},tu)\mid t\in H^{\theta 0},u\in U^-\}\cong H^{\theta0}\times U^-,
\end{equation}
where $H^{\theta0}\subset H^\theta$ is the identity component of the $\theta$-fixed locus of $H$. We next recall the Poisson structure on $Q$.

For $q\in Q$, since $\Theta(q)=q$, $\Theta$ induces a linear map on $T_qG^*$ of order 2. Let $T_qG^*=T_qQ\oplus V_q$, where $V_q$ is the subspace consisting of $(-1)$-eigenvectors of $\Theta_*$. Since $\Theta_*$ preserves $\pi\mid_ {G^*}$, one has $\pi_{G^*}(q)=\pi_Q(q)+\pi'(q)$, where $\pi_Q(q)\in \wedge^2 T_qQ$ and $\pi'(q)\in\wedge^2V_q$. Let $\pi_Q\in \Gamma(\wedge^2TQ)$ be such that $\pi_{G^*}\mid _Q=\pi_Q+\pi'$, where $\pi'\in \Gamma(\wedge^2V_Q)$. By \cite[Theorem~5.9]{Xu03}, $(Q,\pi_Q)$ is a Poisson manifold, and the map $\psi:(\X,\pi_\X)\rightarrow (Q,2\pi_Q)$ is Poisson.

We next show that $\sigma_{\bs_i}\circ \Theta=\Theta\circ\sigma_{\bs_i}$ as maps on $G^*$. We prove the equality on the quantum level. Let $\omega:\U_\A\rightarrow\U_\A$ be the algebra automorphism given by $\omega(E_i)=F_{i}$, $\omega(F_i)=E_{i},$ and $\omega(K_i)=K_{i}^{-1}$, 
for $i\in\I$. Let $\tilde{\tau}:\U_\A\rightarrow \U_\A$ be the algebra involution defined by $\tilde{\tau}(E_i)=E_{\tau i}$, $\tilde{\tau}(F_i)=F_{\tau i}$, and $\tilde{\tau}(K_i)=K_{\tau i}$, for $i\in\I$. It is direct to check that $\tilde{\tau}\circ T_i=T_{\tau i}\circ \tilde{\tau}$ and $\omega\circ T_i=T_i\circ \omega$, for $i\in\I$. Therefore we have $T_{\bs_i}\circ(\omega\tilde{\tau})=(\omega\tilde{\tau})\circ T_{\bs_i}$. Then the equality $\sigma_{\bs_i}\circ \Theta=\Theta\circ\sigma_{\bs_i}$ follows from the specialization.

Therefore $\sigma_{\bs_i}$ preserves the $\Theta$-fixed locus of $G^*$. Since $\sigma_{\bs_i}(e)=e$ and $\sigma_{\bs_i}$ is a homeomorphism, we conclude that $\sigma_{\bs_i}$ preserves $Q$. Moreover, since $\sigma_{\bs_i}$ is Poisson, by the construction of $\pi_Q$, we conclude that $\sigma_ {\bs_i}\mid_{Q}:(Q,\pi_Q)\rightarrow (Q,\pi_Q)$ is Poisson.

It suffices to show that the diagram

\begin{equation}\label{eq:geo0}
        \begin{tikzcd}
            & \X \arrow[r,"\psi"] \arrow[d,"\sigma_i^\imath"'] & Q \arrow[d,"\sigma_{\bs_i}\mid_{Q}"] \\ & \X \arrow[r,"\psi"] & Q    
        \end{tikzcd}
    \end{equation}
commutes. Under the isomorphism \eqref{eq:qhu}, we have $\C[Q]\cong\C[H^{\theta0}]\otimes\mathbb{C}[U^-]$. We have $\mathbb{C}[H^{\theta0}]\cong\mathbb{C}[\alpha_i^{\pm1}\mid i\in\I\setminus\wItau]$. 
Then the comorphism 
$$\psi^*:\C[Q]\rightarrow\mathbb{C}[\X]$$
is the Poisson algebra isomorphism given by $\psi^*(\alpha_i^{\pm1})=(\overline{K_iK_{\tau i}^{-1}})^{\pm1}$ for $i\in\I\backslash\wItau$, and $\psi^*(\chi_i^-)=\overline{B_i}$ for $i\in\I$. 

In order to prove that the diagram \eqref{eq:geo0} commutes, it suffices to show that
\begin{equation}\label{eq:fc}
f(\sigma_{\bs_i}\circ\psi(x))=f(\psi\circ\sigma_i^\imath(x)),
\end{equation}
for any $x\in\X$, and $f\in\C[Q]$ equals $\alpha_i$ ($i\in\I\backslash \wItau$) and $\chi_j^-$ ($j\in \I$). By \cite[Proposition~3.2]{So24}, we may assume that $x\in\X$ is represented by $(t^{-1},tu)\in G^*$, where $u\in U^-$, $t\in H$ with $\alpha_i(t)=1$ for all $i\in \wItau$. Then 
\[
\psi(x)=\big((\theta u)^{-1}(\theta t)^{-1}t^{-1},(\theta t)tu\big).
\]
Let us write $u=u_iu^{(i)}$, where $u_i\in U^-\cap \dot{\bs}_iU^+\dot{\bs}_i^{-1}$ and $u^{(i)}=U^-\cap \dot{\bs}_iU^-\dot{\bs}_i^{-1}$. By taking a reduced expression of $w_0$ starting with $\bs_i$ and applying \eqref{eq:gbg}, we have
\begin{align}\label{eq:geo1}
\sigma_{\bs_i}(\psi(x))=(u_+ \cdot \dot{\bs}_i^{-1}(\theta t)^{-1}t^{-1}\dot{\bs}_i,\dot{\bs}_i^{-1}(\theta t)t\dot{\bs}_i\cdot \dot{\bs}_i^{-1}u^{(i)}\dot{\bs}_iu_i')\in Q,
\end{align}
for some $u_i'\in U^-\cap \dot{\bs}_iU^+\dot{\bs}_i^{-1}, u_+\in U^+$. 

We now separate the proof of \eqref{eq:fc} into the following three cases (1)-(3).
\begin{itemize}
\item[(1)] $f=\alpha_j, j\in\I\backslash\wItau$. By \eqref{eq:geo1} and \eqref{eq:isg}, both sides of \eqref{eq:fc} equal $\alpha_j(\dot{\bs}_i^{-1} (\theta t)^{-1} t^{-1}\dot{\bs}_i)$ as desired.

\item[(2)] $f=\chi^-_j,j\notin \{i,\tau i\},j\in \I$. On one hand, by \eqref{eq:isg} and \eqref{eq:geo1}, we have
\[
\chi_j^-(\sigma_{\bs_i}(\psi(x)))=\chi_j^-(\dot{\bs}_i^{-1}u^{(i)}\bs_i u_i')=\chi_j^-(\dot{\bs}_i^{-1}u^{(i)}\bs_i)=\overline{T_{\bs_i}(F_j)}(t^{-1},tu),
\]
where the second equality follows since $u_i'\in U^-\cap \dot{\bs}_iU^+\dot{\bs}_i^{-1}$.

On the other hand, we have
\[
\chi_j^-(\psi\circ\sigma_i^\imath(x))=\overline{\bT_i(B_j)}(x).
\]
By Proposition \ref{prop:piT} we have $\bT_i(B_j)=T_i(F_j)+R$, where $R\in \U_{\A'}\U_{\A'}^{+,>}+\U_{\A'}\U_{\A'}^{0,>}$. Here $\U_{\A'}^{+,>}=\U_{\A'}\cap\text{ker }\epsilon $ and $\U_{\A'}^{0,>}=\U_{\A'}\cap \text{ker }\pi^0$ (cf. \S \ref{sec:proj}). Thus, $\overline{R}\big((t^{-1},tu)\big)=0$. Therefore we have
\[
\chi_j^-(\psi\circ\sigma_i^\imath(x))=\overline{T_i(F_j)}((t^{-1},tu))=\chi_j^-(\sigma_{\bs_i}\circ\psi(x)).
\]

Hence, \eqref{eq:fc} holds for $f=\chi_j^-$ when $j\not\in \{i,\tau i\}$.

\item[(3)] Finally, we prove \eqref{eq:fc} for $f=\chi_i^-$. The proof of \eqref{eq:fc}  for $f=\chi_{\tau i}^-$ is similar and omitted. Note that $T_{\bs_i}(F_i)=q^{n_{i}/2}K_{\tau_i(i)}^{-1}E_{\tau_i(i)}$, for some $n_{i}\in\mathbb{Z}$ (cf. \S \ref{sec:qgp}). 

On the one hand, we have
\begin{align*}
\chi_i^-(\sigma_{\bs_i}\circ\psi(x))&=\overline{T_{\bs_i}(F_i)}(((\theta u)^{-1}(\theta t)^{-1}t^{-1},(\theta t)tu))\\
&=\alpha_{\tau_i(i)}(\theta(t)t)\chi_{\tau_i(i)}^+((\theta u)^{-1})\\
&=\alpha_{\tau_i(i)}\alpha_{\tau \tau_i(i)}^{-1}(t)\chi^-_{\tau\tau_i(i)}(u).
\end{align*}

On the other hand, by Proposition \ref{prop:WZ} we have
\begin{align*}
    \chi_i^-(\psi\circ\sigma_i^\imath(x))&=\overline{\TT_i(B_i)}(x)\\
    &=\overline{B_{\tau\tau_i(i)}K_{\tau\tau_i(i)}K^{-1}_{\tau_i(i)}}(x)\\
    &=\alpha_{\tau_i(i)}\alpha_{\tau \tau_i(i)}^{-1}(t)\chi^-_{\tau\tau_i(i)}(u).
\end{align*}
Hence \eqref{eq:fc} is proved for $f=\chi_i^-$. Similarly one can prove \eqref{eq:fc} for $f=\chi_{\tau i}^-$.
\end{itemize}
We complete the proof.
\end{proof}

\subsection{Explicit actions of $\sigma_i^{\imath}$ on $K^\perp\backslash G^*$ }\label{sec:braidex}
In this subsection, we assume that the underlying Satake diagram $(\I,\tau)$ is quasi-split type. i.e., $\bI=\emptyset$. Recall from \eqref{eq:psi} and \eqref{eq:qhu} the isomorphism $\psi:\X\overset{\sim}{\longrightarrow} Q \cong H^{\theta 0} \times U^-$. By the proof of Theorem~\ref{thm:iw}, the automorphism $\sigma_i^{\imath}$ on $\X$ coincides with the automorphism $\sigma_{\bs_i}\mid_Q$ on $Q$. Using the above isomorphisms, $\sigma_{\bs_i}\mid_Q$ induces a map on $H^{\theta 0} \times U^-$, which will be also denoted by $\sigma_i^{\imath}$. 

Note that when $\bI=\emptyset$ any vertex $i\in \I$ satisfies exactly one of the following three conditions: (i) $a_{i,\tau i}=2$; (ii) $a_{i,\tau i}=0$; (iii) $a_{i,\tau i}=-1$. We describe the explicit actions of $\sigma_i^{\imath}$ on $H^{\theta 0} \times U^-$ for all three cases.

$\boxed{\text{(i) Split type } a_{i,\tau i}=2}$

    Let us consider a Satake diagram $(\I,\Id)$ of split type where $\tau=\Id$. In this setting, one has $\bs_i=s_i, H^{\theta 0}=\{1\}$ and then an isomorphism $U^-\cong Q, u\mapsto (\theta(u^{-1}),u)$.  Fix a reduced expression of $w_0=\bs_{i_1}\cdots \bs_{i_l}$ in $W^\circ$ with $i_1=i$. Then, for any $u\in U^-$, one can write $u=y_i(a) u^{(i)}$ for $u^{(i)}\in U^-\cap \dot{\bs}_i U^- \dot{\bs}_i^{-1}$ and $a=\chi_i^-(u)$. Using \eqref{eq:gbg}, the induced map $\sigma_i^\imath: U^- \rightarrow U^-$ is given by
    \begin{align}
    \sigma_i^\imath:u\mapsto   \dot{\bs}_i^{-1} u^{(i)} \dot{\bs}_i y_i(a).
    \end{align}
    Equivalently, the formula of $\sigma^\imath_i$ can be written as 
    \[
    \sigma^\imath_i: u\mapsto \varphi_i(\begin{pmatrix}
        \chi_i^-(u) & -1 \\ 1 & 
    \end{pmatrix})u
    \varphi_i(\begin{pmatrix}
       \chi_i^-(u)    & 1 \\ -1 & 
    \end{pmatrix}).
    \]
    In particular, for type AI (i.e., split type A), $U^-$ is the space of unipotent lower-triangular matrices. Under the isomorphism $U^-\rightarrow U^+, x\mapsto (x^T)^{-1}$, the Poisson automorphism $\sigma_i^{\imath}$ coincides with the braid group action on the Dubrovin–Ugaglia Poisson bracket (cf. \cite[(F.13)]{Du94}).
    For type CI (i.e., split type C), the associated Poisson brackets and braid group actions were computed via the R-matrix realization of the $\imath$quantum group in \cite{ZLZ25}. It is interesting to compare $\sigma_i^{\imath}$ with their braid group symmetries.


$\boxed{\text{(ii) Diagonal type } a_{i,\tau i}=0}$ 

Let us consider a quasi-split Satake diagram $(\I,\tau)$ and $i\in \I$ such that $a_{i,\tau i}=0$.
\begin{align*}
\begin{tikzpicture}[baseline=0,scale=1.5]
		\node  at (-0.65,0) {$\circ$};
		\node  at (0.65,0) {$\circ$};
		\node at (-0.65,-0.15) {$i$};
		\node at (0.65,-0.15) {$\tau i$};
        \draw[bend left,<->,red] (-0.65,0.1) to (0.65,0.1);
        \node at (0,0.2) {$\textcolor{red}{\tau}$};
	\end{tikzpicture}
\end{align*}

Set $c_i=c_{\tau i}=-1$. In this setting, $\bs_i=s_i s_{\tau i}$. Fix a reduced expression of $w_0=\bs_{i_1}\cdots \bs_{i_l}$ in $W^\circ$ with $i_1=i$. Then, for any $u\in U^-$, one can write $u=y_i(a) y_{\tau i}(b) u^{(i)}$ such that $u^{(i)}\in U^-\cap \dot{\bs}_i U^- \dot{\bs}_i^{-1}$. By \eqref{eq:gbg}, the induced map $\sigma_i^\imath:H^{\theta 0} \times U^- \rightarrow H^{\theta 0} \times U^-$ is given by
\begin{align}
\sigma_i^\imath:\big((\theta t)t,u\big)\mapsto \big(\dot{\bs}_i^{-1} \;(\theta t)t \; \dot{\bs}_i, \dot{\bs}_i^{-1} u^{(i)} \dot{\bs}_i y_i(\lambda b) y_{\tau i}(\lambda^{-1} a)\big),
\end{align}
where $\lambda=\alpha_i(t)\alpha^{-1}_{\tau i}(t)$.

In particular, let us consider the Satake diagram $(\widetilde{\I} ,\tau)$ is of diagonal type, where $\widetilde{\I}=\{i,i'|i\in \I\}$ and $\tau:i\leftrightarrow i'$. Let $G$ be the adjoint group assoicated to $\I$. Recall the notations $\widetilde{G}=G\times G,\widetilde{K}^\perp\subset \widetilde{G}$ from Example~\ref{ex:diagonal}. The above formula shows that, under the isomorphism $\widetilde{K}^\perp\backslash \widetilde{G}\overset{\sim}{\longrightarrow} G^* $, $\sigma_i^\imath$ on $\widetilde{K}^\perp\backslash \widetilde{G}$ are identified with $\sigma_i$ on $G^*$. 

$\boxed{\text{(iii) Quasi-split type } a_{i,\tau i}=-1}$

Let $(\I,\tau)$ be a quasi-split Satake diagram with and $i\in \I$ such that $a_{i,\tau i}=-1$.
\begin{align*}
\begin{tikzpicture}[baseline=0,scale=1.5]
		\node  at (-0.65,0) {$\circ$};
		\node  at (0.65,0) {$\circ$};
		\draw[-] (-0.6,0) to (0.6, 0);
		\node at (-0.65,-0.15) {$i$};
		\node at (0.65,-0.15) {$\tau i$};
        \draw[bend left,<->,red] (-0.65,0.1) to (0.65,0.1);
        \node at (0,0.2) {$\textcolor{red}{\tau}$};
	\end{tikzpicture}
\end{align*}

Set $c_i=c_{\tau i}=-1$ and fix $\wItau$ such that $i\in \wItau$. In this setting, $\bs_i=s_i s_{\tau i} s_i$. Fix a reduced expression of $w_0=\bs_{i_1}\cdots \bs_{i_l}$ in $W^\circ$ such that $i_1=i$. Then, for any $u\in U^-$, one can write $u=y_{i}(a) y_{\alpha_i+\alpha_{\tau i}}(b) y_{\tau i}(c) u^{(i)}$ where $y_{\alpha_i+\alpha_{\tau i}}(b)=\dot{s}_i y_{\tau i}(b) \dot{s}_i^{-1}$ such that $u^{(i)}\in U^-\cap \dot{\bs}_i U^- \dot{\bs}_i^{-1}$. By \eqref{eq:gbg}, the induced map $\sigma_i^\imath:H^{\theta 0} \times U^- \rightarrow H^{\theta 0} \times U^-$ is given by
\begin{align}
\sigma_i^\imath:\big((\theta t)t,u\big)\mapsto \big((\theta t)t, \dot{\bs}_i^{-1} u^{(i)} \dot{\bs}_i y_{i}(\lambda^{-1}a) y_{\alpha_i+\alpha_{\tau i}}(b) y_{\tau i}(\lambda c)  \big),
\end{align}
where $\lambda=\alpha_i(t)\alpha^{-1}_{\tau i}(t)$.

\section{Examples of the Poisson algebra $\P$}\label{sec:exa}

In this section, we explicitly describe in various examples the Poisson bracket for the Poisson algebra $\P$, using the polynomial generators in Proposition~\ref{prop:poly}. The parameters $c_i$ are all taken to be $-1$. 



\subsection{$\text{Type AI}_{2}$}
Recall the Satake diagram of type AI$_2$ from Table~\ref{table:rktwo}. In this case we have $W^\circ=W,\bs_i=s_i$ and $Y^\imath=0$. Recall the following $\imath$Serre relations from \cite{Let02,Ko14}
\[
\big\{ B_i,\{B_i,B_j\}_q\big\}_{q^{-1}}=-q^{-1} (q^{1/2}+q^{-1/2})^2 B_j,\qquad 1\leq i\neq j\leq 2.
\]
We take the reduced expression $w_0=s_1 s_2 s_1$. Let $b_i=\ov{B_i}$ and $b_{12}=\ov{\TT_1(B_2)}$. By Proposition~\ref{prop:poly}, $\P=\Z'[b_1,b_{12},b_2]$. Using the above $\imath$Serre relations and the formula $\TT_1(B_2)=\frac{\{B_2,B_1\}_q}{q^{1/2}+q^{-1/2}}$ in the proof of Lemma~\ref{lem:braid-rk2}, the Poisson bracket on $\P$ is given by
\begin{align}
\begin{split}
\{b_1,b_2\}&=-b_1 b_2-2b_{12},
\\
\{b_1,b_{12}\}&=b_1 b_{12} +2 b_2,
\\
\{b_{12},b_2\}&=b_2 b_{12} +2 b_1.
\end{split}
\end{align}
It is easy to see that $\P$ is naturally isomorphic to Dubrovin-Ugaglia Poisson algebra \cite{Du94,Uga99}.

\subsection{$\text{Type AIV}_{2}$}
The Satake daigram of type AIV$_2$ is given by
\begin{align*}
\begin{tikzpicture}[baseline=0,scale=1.5]
		\node  at (-0.65,0) {$\circ$};
		\node  at (0.65,0) {$\circ$};
		\draw[-] (-0.6,0) to (0.6, 0);
		\node at (-0.65,-0.15) {$1$};
		\node at (0.65,-0.15) {$2$};
        \draw[bend left,<->,red] (-0.65,0.1) to (0.65,0.1);
        \node at (0,0.2) {$\textcolor{red}{\tau}$};
	\end{tikzpicture}
\end{align*}
In this case we have $W^\circ=\langle \bs_1\rangle$, $\bs_1=s_1s_2s_1$, and $Y^\imath=\mathbb{Z}[\alpha_1-\alpha_2]$. We have the following relations from \cite{Let02,Ko14}
\begin{align*}
    \big\{ B_i,\{B_i,B_j\}_q\big\}_{q^{-1}}=(q^{1/2}+q^{-1/2})^2(1+q^{-2})(q^{3/2}B_ik_i^{-1}+q^{3/2}k_iB_i),\qquad 1\leq i\neq j\leq 2,
    \\
    k_ik_j=1,\quad k_iB_i=q^{-3}B_ik_i,\qquad 1\leq i\neq j\leq 2.
\end{align*}
Following the formula in \eqref{eq:A1}, we have $B_{\beta_1}=B_1$, $B_{\beta_2}=\frac{\{B_2,B_1\}_q}{q^{1/2}+q^{-1/2}}$, and $B_{\beta_3}=B_2$. Let $b_i=\overline{B_{\beta_i}}$, for $1\le i\le 3$, and $k=\overline{k_1}$. By Proposition~\ref{prop:poly}, $\mathcal{P}=\mathbb{Z}'[k^{\pm1},b_1,b_2,b_3]$. The Poisson bracket on $\P$ is given by
\begin{align*}
         \{k,b_1\}&=-3kb_1,  &\{k,b_2\}=0, \\
         \{k,b_3\}&=3kb_3, & \{b_1,b_2\}=4b_1(k+k^{-1})+b_1^3b_3+b_1b_2,\\
         \{b_1,b_3\}&=-2b_2-b_1b_3, &\{b_2,b_3\}=4b_3(k+k^{-1})-b_2b_3.
\end{align*}

\subsection{$\text{Type AIII}_{3}$}
Recall the Satake diagram of type AIII$_3$ from Table~\ref{table:rktwo}. In this case we have $\bs_1=s_1 s_3, \bs_2=s_2$. Recall the following $\imath$Serre relations from \cite{Let02,Ko14} for $j=1,3$
\begin{align*}
\big\{ B_2,\{B_2,B_j\}_q\big\}_{q^{-1}}&=-q^{-1} (q^{1/2}+q^{-1/2})^2 B_j,
\\
\big\{ B_j,\{B_j,B_2\}_q\big\}_{q^{-1}}&=0,
\end{align*}
Fix the reduced expression $w_0=\bs_1 \bs_2 \bs_1 \bs_2$. Let $b_i=\ov{B_i},i=1,2,3$ and $b_{132}=\ov{\TT_1(B_2)},b_{12}=\ov{\TT_1\TT_2(B_3)},b_{23}=\ov{\TT_1\TT_2(B_1)},k=\ov{k_1}$. By \cite[Theorem 7.13]{WZ23}, we have for $j=1,3$
\[
\TT_1\TT_2(B_{\tau j})=\TT_2^{-1}(B_j)=\frac{\{B_2,B_j\}_q}{q^{1/2}+q^{-1/2}}.
\]
By Proposition~\ref{prop:poly}, $\P=\Z'[k^{\pm1},b_1,b_3, b_{132},b_{23},b_{12},b_2]$. Recall from Lemma~\ref{lem:braid-rk2} that $\TT_1(B_2)=\frac{\big\{\{B_2,B_1\}_q,B_3\big\}_q}{(q^{1/2}+q^{-1/2})^2}+B_2=\frac{\big\{\{B_2,B_3\}_q,B_1\big\}_q}{(q^{1/2}+q^{-1/2})^2}+B_2$. Using the above $\imath$Serre relations and the formulas of $\TT_1(B_2),\TT_2^{-1}(B_j),j=1,3$, the Poisson bracket on $\P$ is given by
\begin{align}
\begin{split}
&\{k,b_1\}=-2 k b_1, \qquad \{k,b_2\}=0,\qquad \{k,b_3\}=2kb_3,
\\
&\{b_1,b_3\}=2(k-k^{-1})=\{b_{12},b_{23}\},
\qquad \{b_1,b_{12}\}=b_1 b_{12},\qquad \{b_3, b_{23}\}=b_3 b_{23},
\\
&\{b_1,b_2\}=-b_1 b_2-2b_{12},\qquad \{b_3,b_2\}=-b_3 b_2-2 b_{23},
\\
&\{b_1,b_{132}\}=b_1 b_{132}+2b_{12}(k+k^{-1}-1),\qquad \{b_1,b_{23}\}=-b_1 b_{23}+2b_2-2b_{132},
\\
&\{b_3,b_{132}\}=b_3 b_{132}+2b_{23}(k+k^{-1}-1),\qquad \{b_3,b_{12}\}=-b_3 b_{12}+2b_2-2b_{132},
\\
&\{b_{132},b_{23}\}=b_{132}b_{23}+2b_3 k^{-1},\qquad \{b_{132},b_{12}\}=b_{132}b_{12}+2 b_1 k,
\\
&\{b_{12},b_2\}=b_{12} b_2 + 2 b_1,\qquad \{b_{23},b_2\}=b_{23} b_2+2b_3,
\\
&\{b_{132},b_2\}=2b_1 b_3-2b_{12}b_{23}.
\end{split}
\end{align}

\subsection{$\text{Type CI}_{2}$}
Recall the Satake diagram of type CI$_2$ from Table~\ref{table:rktwo}. In this case we have $W^\circ=W,\bs_i=s_i$, and $Y^\imath=0$. Recall the following $\imath$Serre relations from \cite{Let02,Ko14}
\begin{align*}
\big\{ B_2,\{B_2,B_1\}_{q^2}\big\}_{q^{-2}}&=-q^{-2} (q+q^{-1})^2 B_1,
\\
\Big\{ B_1, \big\{B_1,\{B_1,B_2\}_{q^2}\big\}_1\Big\}_{q^{-2}}
&=-q^{-1} (q^{1/2}+q^{-1/2})^2[2]^2 \{B_1, B_2\}_1.
\end{align*}
We take the reduced expression $w_0=s_1 s_2 s_1 s_2$. Let $b_i=\ov{B_i}$, $b_{12}=\ov{\TT_1(B_2)}$ and $b_{121}=\ov{\TT_1\TT_2(B_1)}$. By Proposition~\ref{prop:poly}, $\P=\Z'[b_1,b_{12},b_{121},b_2]$. Using the above $\imath$Serre relations and the formula of $\TT_2(B_1)$ in the proof of Lemma~\ref{lem:braid-rk2}, the Poisson bracket on $\P$ is given by
\begin{align}
\begin{split}
\{b_2,b_1\}&=2b_2 b_1+2b_{121},
\qquad\qquad \qquad \quad \{b_{121},b_2\}=2b_2 b_{121}+2b_1,
\\
\{b_1,b_{12}\}&=2b_1 b_{12}+4b_1b_2+6b_{121},
\qquad 
\{b_1,b_{121}\}=2b_2-2b_{12},
\\
\{b_{12},b_{121}\}&=2b_{12}b_{121}+2b_1,
\qquad \qquad \qquad \quad 
\{b_{12},b_2\}=2b_{121}^2-2b_1^2.
\end{split}
\end{align}

\newpage
\appendix

\section{Rank 1 root vectors in $\Ui_{\A'}$}\label{app:A}

Recall the root vectors $B_\beta$ from \eqref{def:Bbeta} and the integral form $\Ui_{\A'}$ from \eqref{def:UiA}.
In this section, we provide explicit formulas for the root vectors $B_\beta$ for $\beta\in \cR^+(\bbw_0)$ for each rank 1 Satake diagram, using formulas in \cite[Section 3]{LYZ24}. We verify that these root vectors $B_\beta$ lie in the integral form $\Ui_{\A'}$ for each rank 1 Satake diagram. We recall from \cite{WZ23} the list of rank 1 Satake diagrams in Table~\ref{table:localSatake}.


\begin{table}[H]
\caption{Rank 1 Satake diagrams and local datum}
\label{table:localSatake}
\resizebox{5.5 in}{!}{%
\begin{tabular}{| c | c | c |}
\hline
Type & Satake diagram  & $\bs_i$
\\
\hline
\begin{tikzpicture}[baseline=0]
\node at (0, -0.15) {AI$_1$};
\end{tikzpicture}

&	
    \begin{tikzpicture}[baseline=0]
		\node  at (0,0) {$\circ$};
		\node  at (0,-.3) {\small 1};
	\end{tikzpicture}
& $\bs_1=s_1$
\\
\hline
\begin{tikzpicture}[baseline=0]
\node at (0, -0.15) {AII$_3$};
\end{tikzpicture}
&
   \begin{tikzpicture}[baseline=0]
		\node at (0,0) {$\bullet$};
		\draw (0.1, 0) to (0.4,0);
		\node  at (0.5,0) {$\circ$};
		\draw (0.6, 0) to (0.9,0);
		\node at (1,0) {$\bullet$};
		\node at (0,-.3) {\small 1};
		\node  at (0.5,-.3) {\small 2};
		\node at (1,-.3) {\small 3};
	\end{tikzpicture}
& $\bs_2=s_{2132}$
\\
\hline
\begin{tikzpicture}[baseline=0]
\node at (0, -0.2) {AIII$_{11}$};
\end{tikzpicture}
 &
\begin{tikzpicture}[baseline = 6] 
		\node at (-0.5,0) {$\circ$};
		\node at (0.5,0) {$\circ$};
		\draw[bend left, <->] (-0.5, 0.2) to (0.5, 0.2);
		\node at (-0.5,-0.3) {\small 1};
		\node at (0.5,-0.3) {\small 2};
	\end{tikzpicture}
& $\bs_1=s_1 s_2$
\\
\hline
\begin{tikzpicture}[baseline=0]
\node at (0, -0.2) {AIV$_n$, n$\geq$2};
\end{tikzpicture}
&
\begin{tikzpicture}	[baseline=6]
		\node at (-0.5,0) {$\circ$};
		\draw[-] (-0.4,0) to (-0.1, 0);
		\node  at (0,0) {$\bullet$};
		\node at (2,0) {$\bullet$};
		\node at (2.5,0) {$\circ$};
		\draw[-] (0.1, 0) to (0.5,0);
		\draw[dashed] (0.5,0) to (1.4,0);
		\draw[-] (1.6,0)  to (1.9,0);
		\draw[-] (2.1,0) to (2.4,0);
		\draw[bend left, <->] (-0.5, 0.2) to (2.5, 0.2);
		\node at (-0.5,-.3) {\small 1};
		\node  at (0,-.3) {\small 2};
		\node at (2.5,-.3) {\small n};
	\end{tikzpicture}
& $\bs_1=s_{1 \cdots  n \cdots 1} $
\\
\hline
\begin{tikzpicture}[baseline=0]
\node at (0, -0.2) {BII, n$\ge$ 2};
\end{tikzpicture} &
    	\begin{tikzpicture}[baseline=0, scale=1.5]
		\node at (1.05,0) {$\circ$};
		\node at (1.5,0) {$\bullet$};
		\draw[-] (1.1,0)  to (1.4,0);
		\draw[-] (1.4,0) to (1.9, 0);
		\draw[dashed] (1.9,0) to (2.7,0);
		\draw[-] (2.7,0) to (2.9, 0);
		\node at (3,0) {$\bullet$};
		\draw[-implies, double equal sign distance]  (3.05, 0) to (3.55, 0);
		\node at (3.6,0) {$\bullet$};
		\node at (1,-.2) {\small 1};
		\node at (1.5,-.2) {\small 2};
		\node at (3.6,-.2) {\small n};
	\end{tikzpicture}	
& $ \bs_1=s_{1\cdots n \cdots 1}$
\\
\hline
\begin{tikzpicture}[baseline=0]
\node at (0, -0.15) {CII, n$\ge$3};
\end{tikzpicture}
&
		\begin{tikzpicture}[baseline=6]
		\draw (0.6, 0.15) to (0.9, 0.15);
		\node  at (0.5,0.15) {$\bullet$};
		\node at (1,0.15) {$\circ$};
		\node at (1.5,0.15) {$\bullet$};
		\draw[-] (1.1,0.15)  to (1.4,0.15);
		\draw[-] (1.4,0.15) to (1.9, 0.15);
		\draw (1.9, 0.15) to (2.1, 0.15);
		\draw[dashed] (2.1,0.15) to (2.7,0.15);
		\draw[-] (2.7,0.15) to (2.9, 0.15);
		\node at (3,0.15) {$\bullet$};
		\draw[implies-, double equal sign distance]  (3.1, 0.15) to (3.7, 0.15);
		\node at (3.8,0.15) {$\bullet$};
		\node  at (0.5,-0.15) {\small 1};
		\node at (1,-0.15) {\small 2};
		\node at (3.8,-0.15) {\small n};
	\end{tikzpicture}
&	$\bs_2=s_{2\cdots n\cdots 2 1 2\cdots n\cdots 2} $	
\\
\hline
\begin{tikzpicture}[baseline=0]
\node at (0, -0.05) {DII, n$\ge$4};
\end{tikzpicture}&
	\begin{tikzpicture}[baseline=0]
		\node at (1,0) {$\circ$};
		\node at (1.5,0) {$\bullet$};
		\draw[-] (1.1,0)  to (1.4,0);
		\draw[-] (1.4,0) to (1.9, 0);
		\draw[dashed] (1.9,0) to (2.7,0);
		\draw[-] (2.7,0) to (2.9, 0);
		\node at (3,0) {$\bullet$};
		\node at (3.5, 0.4) {$\bullet$};
		\node at (3.5, -0.4) {$\bullet$};
		\draw (3.05, 0.05) to (3.4, 0.39);
		\draw (3.05, -0.05) to (3.4, -0.39);
		\node at (1,-.3) {\small 1};
		\node at (1.5,-.3) {\small 2};
		\node at (3.6, 0.25) {\small n-1};
		\node at (3.5, -0.6) {\small n};
	\end{tikzpicture}		
&
\begin{tikzpicture}[baseline=0]
\node at (0, 0.15) { $\bs_1=$};
\node at (0, -0.35) { $ s_{1 \cdots n-2 \cdot n-1 \cdot n \cdot n-2  \cdots 1}$};
\end{tikzpicture}
\\
\hline
\begin{tikzpicture}[baseline=0]
\node at (0, -0.2) {FII};
\end{tikzpicture}
&
\begin{tikzpicture}[baseline=0][scale=1.5]
	\node at (0,0) {$\bullet$};
	\draw (0.1, 0) to (0.4,0);
	\node at (0.5,0) {$\bullet$};
	\draw[-implies, double equal sign distance]  (0.6, 0) to (1.2,0);
	\node at (1.3,0) {$\bullet$};
	\draw (1.4, 0) to (1.7,0);
	\node at (1.8,0) {$\circ$};
	\node at (0,-.3) {\small 1};
	\node at (0.5,-.3) {\small 2};
	\node at (1.3,-.3) {\small 3};
	\node at (1.8,-.3) {\small 4};
\end{tikzpicture}
& $\bs_4=s_{432312343231234}$
\\
\hline
\end{tabular}
}
\newline
\smallskip
\end{table}

Let $(\I=\wI\cup\bI,\tau)$ be a rank $1$ Satake diagram. In the rank 1 case, $\wI=\{i,\tau i\}$ and $\bbw_0=\bs_i$.


\subsection{$\un{\text{Type AI}_1}$}
In this case,
$\cR^+(\bs_1)=\{\alpha_1\}$. Set $B_{\alpha_1}=B_1 \in \Ui_{\A'}$. 

\subsection{$\un{\text{Type AII}_3}$}
In this case,
$\cR^+(\bs_2)=\{\alpha_2,\alpha_1+\alpha_2,\alpha_2+\alpha_3,\alpha_1+\alpha_2+\alpha_3\}$.
Set
\begin{align*}
\begin{split}
B_{\alpha_2}=&B_2,\qquad B_{\alpha_1+\alpha_2}=T_1^{-1}(B_2),
\\
B_{\alpha_2+\alpha_3}=&T_3^{-1}(B_2),
\qquad
B_{\alpha_1+\alpha_2+\alpha_3}= T_{13}^{-1}(B_2).
\end{split}
\end{align*}
Note that $B_\beta,\beta\in \cR^+(\bs_2)$ all have the form $T_w^{-1}(B_i)$ for some $w\in W_\bullet,i\in \wI$. By Proposition~\ref{prop:Tj}, $B_\beta\in \Ui_{\A'}$ for any $\beta\in \cR^+(\bs_2)$.
\subsection{$\un{\text{Type  AIII}_{11}}$}

In this case, $\cR^+(\bs_1)=\{\alpha_1,\alpha_2\}$. We set $B_{\alpha_1}=B_1$, $B_{\alpha_2}=B_2$. Clearly, they lie in $\Ui_{\A'}$.

\subsection{$\un{\text{Type  AIV}_n,n\geq2}$ }

In this case, $\cR^+(\bs_1)=\{\beta_i\mid 1\leq i\leq 2n-1 \}$, where
\begin{align*}
\beta_i=\begin{cases}
\alpha_1+\alpha_2+\cdots+\alpha_i, & \text{ for }1\leq i\leq n,
\\
\alpha_n+\alpha_{n-1}+\cdots+\alpha_{2n-i+1}, &\text{ for }n+1\leq i\leq 2n-1.
\end{cases}
\end{align*}
We define
\begin{align}\label{eq:A1}
\begin{split}
B_{\beta_i}=&T_{2\cdots i}^{-1}(B_1),\qquad 1\leq i\leq n-1,
\\
B_{\beta_n}=&\frac{[B_n,T_{2\cdots(n-1)}^{-1}(B_1)]_q}{q^{1/2}(q-q^{-1})} 
=\frac{\big\{B_n,T_{2\cdots(n-1)}^{-1}(B_1)\big\}_q}{q^{1/2}+q^{-1/2}},
\\
B_{\beta_{n+i}}=&T_{(n-1)\cdots(n-i+1) }^{-1} (B_n),\qquad 1\leq i\leq n-1.
\end{split}
\end{align}
By Lemma~\ref{lem:qcomi} and Proposition~\ref{prop:Tj} $B_{\beta_i}\in \Ui_\A$ for any $1\leq i\leq 2n-1$.

\subsection{$\un{\text{Type  DII},n\geq4}$ }
In this case, $\cR^+(\bs_1)=\{\beta_i\mid 1\leq i\leq 2n-2\}$ where
\begin{align*}
\beta_i=
\begin{cases}
\alpha_1+\alpha_2+\cdots+\alpha_i, & \text{ for }1\leq i\leq n-1,
\\
\alpha_1+\alpha_2+\cdots+\alpha_{n-2}+\alpha_n , &\text{ for } i=n,
\\
\alpha_1+\cdots+\alpha_{2n-i-1}+2\alpha_{2n-i}+\cdots+2\alpha_{n-2} +\alpha_{n-1}+\alpha_n, &\text{ for } n+1\leq i\leq 2n-2.
\end{cases}
\end{align*}
We define
\begin{align}
\begin{split}
&B_{\beta_i}=T_{2\cdots i}^{-1}(B_1),\qquad 1\leq i\leq n-1,
\\
&B_{\beta_{n}}= T_{2\cdots (n-2)\cdot n}^{-1}(B_1),
\\
&B_{\beta_{n+1}}=T_{2\cdots n}^{-1}(B_1),
\\
&B_{\beta_{n+i}}=T_{2\cdots n\cdot (n-2)\cdot (n-3)\cdots (n-i)}^{-1}(B_1),\qquad 2\leq i\leq n-2.
\end{split}
\end{align}
For the same reason as type AII$_3$, $B_{\beta_{i}}$ for $ 1\leq i \leq 2n-2$ all lie in $\Ui_{\A'}$.

\subsection{$\un{\text{Type BII}, n\geq2}$} 

In this case, $\cR^+(\bs_1)=\{\beta_i\mid 1\leq i\leq 2n-1\}$, where
\begin{align}
\beta_i=\begin{cases}
\alpha_1+\alpha_2+\cdots +\alpha_i,& \text{ for }1\leq i\leq n,
\\
\alpha_1+\cdots +\alpha_{2n-i}+2\alpha_{2n+1-i} +\cdots +2\alpha_n  ,&\text{ for }n+1\leq i\leq 2n-1.
\end{cases}
\end{align}
We define 
\begin{align*}
&B_{\beta_i}=T_{2\cdots i}^{-1}(B_1),\qquad 1\leq i\leq n-1,
\\
&B_{\beta_n}=\frac{\big[F_n,T_{2\cdots(n-1)}^{-1}(B_1)\big]_{q^2}}{q^2-q^{-2}}=\frac{[F_n,B_{\beta_{n-1}}]_{q^2}}{q(q^2-q^{-2})}
=\frac{\{F_n,B_{\beta_{n-1}}\}_{q^2}}{q+q^{-1}},
\\
&B_{\beta_{n+i}}=T_{2\cdots n\cdots(n-i+1)}^{-1}(B_1),\qquad 1\leq i\leq n-1.
\end{align*}
By Lemma~\ref{lem:qcomi} and Proposition~\ref{prop:Tj}. it is clear that $B_{\beta_{i}}\in \Ui_{\A'}$ for $1\le i\le 2n-1$.

\subsection{$\un{\text{Type CII}, n\geq3}$} 

In this case, $\cR^+(\bs_2)=\{\beta_i\mid 1\leq i\leq 4n-5\}$, where 
\begin{align*}
	\beta_i= \begin{cases}  \alpha_2+\alpha_3+\cdots+\alpha_{i+1}, & \text { for } 1  \leq i \leq n-2, 
		\\ 
		2\alpha_2+2 \alpha_3+\cdots+2 \alpha_i+\alpha_{i+1}, & \text { for } i =n-1,
		\\
		\alpha_2+\cdots+\alpha_{2n-i-1}+2\alpha_{2n-i}+\cdots+2\alpha_{n-1}+\alpha_n, & \text { for } n \leq i \leq 2 n-3 ,
		 \\
		\alpha_1+2 \alpha_2+\cdots+2 \alpha_{n-1}+\alpha_n, & \text { for } i=2 n-2 ,
  \\ \alpha_1+\alpha_2+\cdots+\alpha_{i-2 n+3}, & \text { for } 2n-1 \leq i \leq 3n-4, 
		  \\ 
		2 \alpha_1+\cdots+2 \alpha_{n-1}+\alpha_n, & \text { for } i=3n-3,
		   \\
\alpha_1+\cdots+\alpha_{4n-3-i}+2\alpha_{4n-2-i}+\cdots+2\alpha_{n-1}+\alpha_n, & \text{ for } 3n-2\leq i\leq4n-5,
\end{cases}
\end{align*}
We define
\begin{align} 
\begin{split}
&B_{\beta_i}= T_{3\cdots (i+1)}^{-1}(B_2),\qquad 1\leq i\leq n-2,
 \\
&B_{\beta_{n-1}} =\frac{1}{[2]q(q-q^{-1})^2} \Big[\big[F_n, B_{\beta_{n-2}}\big]_{q^2},B_{\beta_{n-2}}\Big]
=\frac{\Big\{\big\{F_n, B_{\beta_{n-2}}\big\}_{q^2},B_{\beta_{n-2}}\Big\}}{[2](q^{1/2}+q^{-1/2})^2},
\\
&B_{\beta_{n+i}}= T_{3\cdots n\cdots(n-i)}^{-1}(B_2), \qquad 0\leq i\leq n-3,
    \\
&B_{\beta_{2n-2}}=\frac{1}{q^{1/2}(q-q^{-1})}\big[T_1^{-1}(B_2), B_{\beta_{2n-3}}\big]_q
=\frac{\big\{T_1^{-1}(B_2), T_{3\cdots n\cdots 3}^{-1}(B_{2})\big\}_q}{q^{1/2}+q^{-1/2}},
    \\
&B_{\beta_{i+2n-1}}= T_{13\cdots (i+2)}^{-1}(B_2),\qquad 0\leq i\leq n-3,
    \\
&B_{\beta_{3n-3}} =\frac{1}{[2]q(q-q^{-1})^2}\big[[F_n,B_{\beta_{3n-2}}]_{q^2},B_{\beta_{3n-2}}\big]
=\frac{\Big\{\big\{F_n,B_{\beta_{3n-2}}\big\}_{q^2},B_{\beta_{3n-2}}\Big\}}{[2](q^{1/2}+q^{-1/2})^2},
    \\
&B_{\beta_{i+3n-2}} = T_{13\cdots n\cdots(n-i)}^{-1}(B_2),
    \qquad 0\leq i\leq n-3.
\end{split}
\end{align}
By Lemma~\ref{lem:qcomi} and Proposition~\ref{prop:Tj}, it is clear that $B_{\beta_{i}}\in \Ui_{\A'}$ for $1\le i\le 4n-5$.

\subsection{$\un{\text{Type FII}}$} 
Let $\beta_{i},1\le i\le 15$ be the set of all roots in $\cR^+(\bs_4)$; their labeling is obtained from the reduced expression of $\bs_4$ in Table~\ref{table:localSatake}.
We define 
\begin{align}
\begin{split}
&B_{\beta_1}=B_4,\qquad B_{\beta_{2}}=T_{3}^{-1}(B_4),\qquad B_{\beta_{3}}=\frac{1}{[2]q(q-q^{-1})^2}\big[[F_2,B_{\beta_2}]_{q^2},B_{\beta_2}\big],
\\
&B_{\beta_{4}}=T_{32}^{-1}(B_4),\qquad
B_{\beta_{5}}=T_{1}^{-1}(B_{\beta_3}),\qquad
B_{\beta_{6}}=\frac{1}{q(q^2-q^{-2})}[F_2,B_{\beta_5}]_{q^2},\qquad
\\
&B_{\beta_{7}}=T_{321}^{-1}(B_4),\qquad
B_{\beta_{8}}=\frac{1}{q^{1/2}(q-q^{-1})}[T_{323}^{-1}(B_4),B_{\beta_7}]_q,\qquad
B_{\beta_{9}}=T_{323}^{-1}(B_4),\qquad
\\
&B_{\beta_{10}}=\frac{1}{[2]q(q-q^{-1})^2}\big[[F_1,B_{\beta_9}]_{q^2},B_{\beta_9}\big],
\qquad
B_{\beta_{11}}=T_{3213}^{-1}(B_4),
\\
&B_{\beta_{12}}=\frac{1}{q(q^2-q^{-2})}[F_2,B_{\beta_{10}}]_{q^2},
\qquad
B_{\beta_{13}}=\frac{1}{q(q^2-q^{-2})}[F_1,B_{\beta_{12}}]_{q^2},
\\
&B_{\beta_{14}}=T_{32312}^{-1}(B_4),\qquad
B_{\beta_{15}}=T_{323123}^{-1}(B_4).
\end{split}
\end{align}
By Proposition~\ref{prop:Tj}, $B_{\beta_{i}}\in \Ui_{\A'}$ for $i\in \{1,2,4,5,7,9,11,14,15\}$.
We show that $B_{\beta_i}\in \U_{\A'}$ for the remaining $i$. Indeed, we have
\begin{align*}
&B_{\beta_{3}}=\frac{\big\{\{F_2,B_{\beta_2}\}_{q^2},B_{\beta_2}\big\}}{[2](q^{1/2}+q^{-1/2})^2},
\qquad 
B_{\beta_{6}}=\frac{\{F_2,B_{\beta_5}\}_{q^2}}{(q^{1/2}+q^{-1/2})^2 },
\qquad
B_{\beta_{8}}=\frac{\{T_{323}^{-1}(B_4),B_{\beta_7}\}_q}{q^{1/2}+q^{-1/2}},
\\
&B_{\beta_{10}}=\frac{\big\{\{F_1,B_{\beta_9}\}_{q^2},B_{\beta_9}\big\}}{[2](q^{1/2}+q^{-1/2})^2},
\qquad
B_{\beta_{12}}=\frac{\{F_2,B_{\beta_{10}}\}_{q^2}}{q+q^{-1}},
\qquad
B_{\beta_{13}}=\frac{\{F_1,B_{\beta_{12}}\}_{q^2}}{q+q^{-1}}.
\end{align*}
By Lemma~\ref{lem:qcomi} and Proposition~\ref{prop:Tj}, $B_{\beta_{i}}\in \Ui_{\A'}$ for $i\in \{3,6,8,10,12,13\}$ as desired.

\end{document}